\newcommand{\bB}{\mathbf{B}}
\newcommand{\cA}{\mathcal{A}}
\newcommand{\cD}{\mathcal{D}}
\newcommand{\cO}{\mathcal{O}}
\newcommand{\cS}{\mathcal{S}}
\newcommand{\cH}{\mathcal{H}}
\newcommand{\cJ}{\mathcal{J}}
\newcommand{\cL}{\mathcal{L}}
\newcommand{\cX}{\mathcal{X}}
\newcommand{\cG}{\mathcal{G}}
\newcommand{\fp}{\mathfrak{p}}
\newcommand{\CC}{\mathbb{C}}
\newcommand{\QQ}{\mathbb{Q}}
\newcommand{\ZZ}{\mathbb{Z}}
\newcommand{\bc}{\mathbf{c}}
\newcommand{\bH}{\mathbf{H}}
\newcommand{\bL}{\mathbf{L}}
\newcommand{\be}{\mathbf{e}}
\newcommand{\fc}{\mathfrak{c}}
\newcommand{\fS}{\mathfrak{S}}
\newcommand{\hh}{\mathfrak{h}}
\newcommand{\fg}{\mathfrak{g}}
\newcommand{\oA}{\overline{A}}
\DeclareMathOperator{\Ind}{Ind}
\DeclareMathOperator{\modd}{-mod}
\DeclareMathOperator{\KZ}{KZ}
\DeclareMathOperator{\supp}{supp}
\DeclareMathOperator{\Mat}{Mat}
\DeclareMathOperator{\Spec}{Spec}
\DeclareMathOperator{\Hom}{Hom}
\DeclareMathOperator{\gr}{gr}
\DeclareMathOperator{\End}{End}
\DeclareMathOperator{\triv}{triv}
\DeclareMathOperator{\Ext}{Ext}
\DeclareMathOperator{\HH}{HH}
\DeclareMathOperator{\ad}{ad}
\DeclareMathOperator{\Res}{Res}
\DeclareMathOperator{\sign}{sign}
\DeclareMathOperator{\R}{R}
\DeclareMathOperator{\Diff}{Diff}
\DeclareMathOperator{\Soc}{Soc}
\DeclareMathOperator{\HC}{HC}
\DeclareMathOperator{\Loc}{Loc}
\DeclareMathOperator{\Fun}{Fun}
\DeclareMathOperator{\opp}{opp}
\DeclareMathOperator{\Supp}{SS}
\DeclareMathOperator{\lann}{LAnn}
\DeclareMathOperator{\rann}{RAnn}
\DeclareMathOperator{\ann}{Ann}
\DeclareMathOperator{\bimod}{-bimod}
\DeclareMathOperator{\repp}{-rep}
\DeclareMathOperator{\codim}{codim}
\DeclareMathOperator{\nam}{Nam}
\newcommand{\homf}[2]{\Hom_{\mathtt{fin}}(#1,#2)}
\newtheorem{theorem}{Theorem}[section]
\newtheorem{rmk}[theorem]{Remark}
\newtheorem{prop}[theorem]{Proposition}
\newtheorem{definition}[theorem]{Definition}
\newtheorem{cor}[theorem]{Corollary}
\newtheorem{lemma}[theorem]{Lemma}
\newenvironment{proofmain}[1][\it{Proof of Theorem \ref{thm_main}}]{\textbf{#1. } }{$\square$}
\newenvironment{proofmainA}[1][\it{Proof of Theorem} \ref{thm:maintypeA}]{\textbf{#1. } }{$\square$}
\newenvironment{prooffull}[1][\it{Proof of Theorem} \ref{thm:main full support}]{\textbf{#1.} }{$\square$}
\newenvironment{proofthmvery}[1][\it{Proof of Theorem} \ref{thm:veryverytechnical}]{\textbf{#1.} }{$\square$}
\author{Jos\'e Simental}
\address{Department of Mathematics, Northeastern University. Boston, MA 02115. USA.}
\email{simentalrodriguez.j@husky.neu.edu}
\thanks{Keywords: Harish-Chandra bimodule, Rational Cherednik algebra, Category O, Namikawa-Weyl group}
\thanks{MSC 2010: 16D70, 16D99, 18D99}
\title{Harish-Chandra bimodules over rational Cherednik algebras}
\begin{document}

\begin{abstract}
We study Harish-Chandra bimodules over the rational Cherednik algebra $H_{c}(W)$ associated to a complex reflection group $W$ with parameter $c$. Our results allow us to partially reduce the study of these bimodules to smaller algebras. We classify those pairs of parameters $(c,c')$ for which there exist fully supported Harish-Chandra bimodules, and give a description of the category of all Harish-Chandra bimodules modulo those without full support. When $W$ is a symmetric group we are able to classify all irreducible Harish-Chandra bimodules. Our proofs are based on localization techniques, the action of the Namikawa-Weyl group on the set of parameters, and the study of partial KZ functors.
\end{abstract}
\maketitle 
\section{Introduction}\label{sect_intro}

In this paper we study Harish-Chandra bimodules over rational Cherednik algebras. Recall that a rational Cherednik algebra is an associative algebra $H_c := H_c(W, \hh)$ associated to a complex reflection group $W$ and its reflection representation $\hh$, see Subsection \ref{subsection:RCA} for a precise definition. This algebra depends on a parameter $c$, which is a conjugation invariant function $c: \cS \rightarrow \CC$, where $\cS$ is the set of reflections of $W$. The algebra $H_c$ is filtered, with associated graded $\gr H_c = \CC[\hh\oplus\hh^*]\# W$, the smash-product algebra. It follows that $H_c$ has a triangular decomposition $H_c = \CC[\hh]\otimes \CC W \otimes \CC[\hh^*]$, where $\CC[\hh], \CC[\hh^*] $ and $\CC W$ sit inside $H_c$ as subalgebras, similar to that of the universal enveloping algebra of a semisimple Lie algebra $\fg$. Then, the representation theory of the rational Cherednik algebra has many similarities with that of semisimple Lie algebras. For example, one has a category $\cO_{c}$, \cite{GGOR}, to be recalled in Subsection \ref{subsect:catO} below, that has been extensively studied in recent years, see e.g. \cite{BE, GGOR, gordon-losev, losev_cyclotomic, losev_derived, rouquier, RSVV, shan, webster, wilcox}. One also has a notion of Harish-Chandra bimodules, \cite{BEG}. These are the main object of study of this paper. Unlike category $\cO$, Harish-Chandra bimodules have not been extensively studied in the literature. Let us remark that, while in the Lie algebra setting category $\cO$ and the category of Harish-Chandra bimodules are very similar, cf. \cite[Section 5]{BG}, this is no longer the case in the Cherednik algebra setting see, for example, Subsection \ref{subsect:c=r/n} below. 

An $H_c\text{-}H_{c'}$-bimodule $B$ is said to be \emph{Harish-Chandra} (HC, for short) if it is finitely generated and the adjoint action of every element from $\CC[\hh]^W$ or $\CC[\hh^*]^W$ is locally nilpotent, \cite[Definition 3.2]{BEG}. Just as in the Lie algebra case, HC bimodules form a full Serre subcategory of the category of $H_{c}\text{-}H_{c'}$-bimodules, and the tensor product with a HC $H_{c}\text{-}H_{c'}$-bimodule $B$ induces a functor $B \otimes_{H_{c'}} \bullet : \cO_{c'} \rightarrow \cO_{c}$. Recently, these functors have been used to construct derived equivalences between categories $\cO$ for different parameters $c, c'$, see Subsection 5.4 in \cite{losev_derived}. 

An interesting problem, then, is to describe the category $\HC(c, c')$ of HC $H_{c}\text{-}H_{c'}$-bimodules. In this paper, we address the problem of classifying its irreducible objects. A classification of irreducible HC $H_c\text{-}H_{c'}$-bimodules has been carried out in \cite{BEG} for the case when $W$ is a Coxeter group and the parameters $c, c'$ are integral. Namely, in this case we have that the category $\HC(c,c')$ is semisimple and isomorphic to the category of finite dimensional representations of the group $W$. An explicit construction of its irreducibles is given in terms of spaces of locally finite maps $\homf{M}{N}$ for irreducible objects $M, N \in \cO_{c}$. Here, a map $f \in \Hom_{\CC}(M, N)$ is said to be \emph{locally finite} if it is locally nilpotent with respect to the operator $\ad(a)$ for all $a \in \CC[\hh]^{W}\cup\CC[\hh^{*}]^{W}$. Moreover, for $B_1 \in \HC(c, c')$, $B_2 \in \HC(c', c'')$ one has that $B_1 \otimes_{H_{c'}} B_2 \in \HC(c, c'')$, Proposition \ref{prop:HCproperties}. So the category $\HC(c,c)$ becomes a monoidal category and the equivalence $\HC(c,c) \cong W\!\repp$ is that of monoidal categories. 

In this paper, we generalize the previous result by giving a description of the category $\overline{\HC}(c,c')$ of all HC $H_{c}\text{-}H_{c'}$-bimodules modulo the full subcategory whose objects are HC bimodules whose singular support is strictly contained in $(\hh\oplus\hh^{*})/W$, see Subsection \ref{subsection:HC} for the definition of the singular support of a HC bimodule. Here, we allow $W$ to be \emph{any} complex reflection group, and the parameters $c, c'$ are arbitrary. First of all, if $\fp \cong \CC^{|\cS/W|}$ denotes the space of all parameters for the Cherednik algebra, there is a lattice $\fp_{\ZZ} \subseteq \fp$ that generalizes the notion of integrality mentioned in the previous paragraph, see Subsection \ref{subsect:lattice} for a precise definition. Associated to $W$, there is a product of symmetric groups (the Namikawa-Weyl group) that acts on $\fp$ in an affine way, see Subsection \ref{subsect:nami}. For a parameter $c$, we construct in Subsection \ref{subsect:subgroup} a normal subgroup $W_{c} \subseteq W$ satisfying the following properties:

\begin{enumerate}
\item $W_{c}$ is a reflection group.
\item $W_{c} = \{1\}$ if and only if $c$ is in the Namikawa-Weyl group orbit of an integral parameter. If $c$ is generic, then $W_{c} = W$.
\item $W_{c} = W_{c'}$ provided that either (i) $c - c'$ is integral, or (ii) $c$ and $c'$ are conjugate under the Namikawa-Weyl group action.
\end{enumerate}

\begin{theorem}\label{thm:main1}
Let $W$ be a complex reflection group, and let $c, c' \in \CC[\cS]$ be conjugation invariant functions. The following is true.
\begin{enumerate}
\item The category $\overline{\HC}(c,c')$ is nonzero if and only if there exists $\sigma$ in the Namikawa-Weyl group such that $c - \sigma c' \in \fp_{\ZZ}$.
\item If $\overline{\HC}(c,c')$ is nonzero, then it is equivalent to the category of representations of the group $W/W_{c}$. If $c = c'$, this is an equivalence of monoidal categories.
\end{enumerate}
\end{theorem} 

Let us remark that the Namikawa-Weyl group does not appear in the results of \cite{BEG2} since, when $W$ is a real reflection group, the Namikawa-Weyl group preserves the latice $\fp_{\ZZ}$ see, for example, Lemma \ref{lemma:namintegral}.



We study more closely the case where $W = \fS_n$, the symmetric group in $n$ elements. In this case, the possible supports of modules in category $\cO$ have been classified, \cite{BE}, and it is known which irreducibles have a given support, \cite{wilcox}. Using the results of {\it loc. cit.} together with Theorem \ref{thm_main}, we are able to describe all irreducible HC bimodules over $H_{c}(\fS_n)$, see Section \ref{sect:A}. Our main result for type A is the following.

\begin{theorem}\label{thm:main2}
Consider the rational Cherednik algebra $H_{c}(\fS_{n})$, where $c = r/m$ with $\gcd(r;m) = 1$ and $1 < m \leq n$. The possible supports for HC bimodules form a chain $(\hh\oplus\hh^{*})/\fS_{n} = \overline{\cL}_{0} \supsetneq \overline{\cL}_{1} \supsetneq \cdots \supsetneq \overline{\cL}_{\lfloor n/m\rfloor}$. For each $i = 0, \dots, \lfloor n/m\rfloor$, the category of HC $H_{c}$-bimodules supported on $\overline{\cL}_{i}$ modulo those supported on $\overline{\cL}_{i+1}$ is equivalent, as a monoidal category, to the category of representations of $\fS_{i}$.
\end{theorem} 

Theorem \ref{thm:main2} is proved in Subsection \ref{subsect:allbimodA}, see Theorem \ref{thm:maintypeA}. We also describe the category of HC bimodules over the algebra $H_{c}(\fS_n)$, $c = r/n$, with $\gcd(r;n) = 1$. In this case, the algebra $H_c(\fS_n)$ admits a finite dimensional representation, and the category of HC bimodules has two irreducible objects: a finite dimensional one, $M$, and the unique proper nonzero ideal in the algebra $H_c$, $\cJ$. Obviously, we have a nonsplit exact sequence $0 \rightarrow \cJ \rightarrow H_c \rightarrow M \rightarrow 0$. On the other hand, \cite[Theorem 7.7]{BL} constructs a non-split exact sequence $0 \rightarrow M \rightarrow D \rightarrow \cJ \rightarrow 0$. We provide a more simple-minded construction of the bimodule $D$ that the one given in \cite{BL}, and show that $M, \cJ, H_c$ and $D$ exhaust the indecomposable HC $H_c$-bimodules. 

The proofs of Theorems \ref{thm:main1} and \ref{thm:main2} depend on a reduction result that we give in Section \ref{sect_reduction}, see Theorem \ref{thm:veryverytechnical}. The statement is a bit technical, roughly speaking it gives us a criterium for extending a bimodule defined on an open subvariety of $\hh$ to the closure of that subvariety. This is used in Theorem \ref{thm:main1} to find a HC bimodule of a particular form whenever $\overline{\HC}(c,c') \neq 0$. In Theorem \ref{thm:main2}, it is used to compute the image of the restriction functors from \cite{losev_completions}, which are recalled in Subsection \ref{subsection:induction}.

To finish this section, let us sketch the structure of the paper. In Section 2 we recall the definition of the rational Cherednik algebra $H_{c}$ and its homogeneous version $\bH$, the spherical subalgebra $eH_{c}e$ and category $\cO_c$. We also recall finite Hecke algebras $\cH_{q}$, and give an overview of the functor $\KZ: \cO_{c} \rightarrow  \cH_{q}\modd$. This functor will be very important in our arguments. At the end of the section, we review known results about HC bimodules, including a useful alternative definition given in \cite{losev_completions}, as well as induction and restriction functors introduced in {\it loc. cit.} In Section 3 we state and prove our reduction theorem, Theorem \ref{thm:veryverytechnical}, that can be interpreted to give a lower bound on the number of irreducible HC bimodules with a given support. We would like to remark that this section is inspired by \cite{losev_hecke}, where a similar result is obtained on the level of category $\cO_{c}$. Section 4 uses localization techniques to study HC bimodules with full support. We remark that the main technical result of this section, Lemma \ref{lemma:mainKZ}, has already appeared in a slightly weaker form in \cite{spencer}. In Section 5 we introduce the Namikawa-Weyl group, study its action on the set of parameters, and apply this together with results of Sections 3 and 4 to give a description of the category $\overline{\HC}(c,c')$. Finally, in Section 6 we use the results of Sections 3 and 4, together with results from \cite{wilcox} to give a description of all (not only fully supported) irreducible HC bimodules over rational Cherednik algebras associated to symmetric groups. We also give a complete description of the category $\HC(H_{r/n}(\fS_n))$, where $\gcd(r;n) = 1$. The proof of this is based on the vanishing of several extension groups.
\vspace{0.25cm}

{\bf Acknowledgments.} This paper would've never appeared without Ivan Losev's help. I am very grateful to him for many interesting discussions and his countless remarks and suggestions. I'd also like to thank Iain Gordon for pointing out the reference \cite{spencer}, Seth Shelley-Abrahamson for stimulating discussions and comments, and the anonymous referee for helpful suggestions that allowed me to improve the exposition. 

\section{Preliminaries}\label{sect_prelim}
\subsection{Rational Cherednik Algebras.}\label{subsection:RCA}  Fix a complex reflection group $W$, let $\cS \subseteq W$ be the set of reflections and let $c: \cS \rightarrow \CC$ be a conjugation invariant function. For each reflection $s \in \cS$, let $\alpha_{s} \in \hh^{*}$ be an eigenvector with eigenvalue $\lambda_{s} \neq 1$. Also, let $\alpha_{s}^{\vee} \in \hh$ be an eigenvector of $s$ with eigenvalue $\lambda_{s}^{-1}$. We remark that $\alpha_{s}^{\vee}$, $\alpha_{s}$ are unique up to multiplication by a nonzero scalar, and we normalize so that $\langle \alpha_{s}, \alpha_{s}^{\vee}\rangle = 2$. The rational Cherednik algebra $H_{c} := H_{c}(W, \hh)$ is the quotient of the smash product algebra $T(\hh \oplus \hh^{*})\# W$, where $T(\bullet)$ denotes the tensor algebra, by the relations: 

\begin{equation}\label{eqn:relations}
\begin{array}{ll}
 [x, x'] = 0, \; \; \; \; \; \; \; \; \; \; \; \; \; \; \; [y, y'] = 0 & x, x' \in \hh^{*}, y, y' \in \hh \\
\hbox{} [y, x] = \langle y, x\rangle - \sum_{s \in \cS} c(s) \langle\alpha_{s}, y\rangle \langle x, \alpha_{s}^{\vee}\rangle s & x \in \hh^{*}, y \in \hh.
\end{array}
\end{equation}

The algebra $H_{c}$ is filtered, with $W, \hh^{*}$ in filtration degree $0$ and $\hh$ in filtration degree $1$. Its associated graded is the smash product $\CC[\hh\oplus\hh^{*}]\#W$. Thus, we have a triangular decomposition $H_{c} = \CC[\hh]\otimes \CC W \otimes \CC[\hh^{*}]$ given by the multiplication map, see e.g. \cite[Theorem 1.3]{EG}.

Let $\delta:= \displaystyle{\prod_{s \in \cS}\alpha_{s}}$. This is a $W$-semiinvariant element of $\CC[\hh]$. Let $\hh^{reg}$ be the principal open set in $\hh$ determined by $\delta$. Note that $\hh^{reg}$ coincides with the locus where the $W$-action is free. For some positive integer $m$, the element $\delta^{m}$ is $W$-invariant and the operator $[\delta^{m}, \cdot]$ is locally nilpotent, so the localization $H_{c}[\delta^{-1}]$ makes sense, and it is isomorphic to the algebra $\cD(\hh^{reg})\# W$.  Here, $\cD(\bullet)$ denotes the algebra of algebraic differential operators.

We will need the spherical rational Cherednik algebra which is constructed as follows. Let $e:= \frac{1}{|W|}\sum_{w \in W}w$ be the trivial idempotent in $\CC W$. Note that $e \in H_{c}$ is still an idempotent. The associated spherical subalgebra of $H_{c}$ is $eH_{c}e$. We remark that $eH_{c}e$ inherits a filtration from that of $H_{c}$, and $\gr(eH_{c}e) = \CC[\hh\oplus\hh^{*}]^{W}$. Note that $e\delta^{n} \in eH_{c}e$ for some $n > 0$, so that the localization $eH_{c}e[(e\delta^{n})^{-1}]$ makes sense and, moreover, it is isomorphic to $\cD(\hh^{reg}/W)$. If $c$ is a parameter such that the algebras $H_{c}$ and $eH_{c}e$ are Morita equivalent (that is, $H_{c} = H_{c}eH_{c}$) then we say that $c$ is \emph{spherical}.

\subsection{Homogeneous rational Cherednik algebras.} Sometimes, see e.g. Section \ref{sect_reduction} below, it will be more convenient to work with the homogeneous version of the rational Cherednik algebra. Namely, let $\cS = \bigsqcup_{i = 1}^{r} \cS_{i}$ be the decomposition of $\cS$ into conjugacy classes, and let $\hbar, \bc_1, \dots, \bc_r$ be independent variables. For $s \in \cS_{i}$, define $\bc(s) := \bc_{i}$.  Let $\fc$ be the vector space with basis $\hbar, \bc_1, \dots, \bc_r$. Then, $\bH$ is the $S(\fc)$-algebra defined by generators and relations analogous to those of the previous subsection, with the commutation relation between $\hh$ and $\hh^{*}$ replaced by

$$
[y, x] = \hbar\langle y, x\rangle - \sum_{s \in \cS} \bc(s)\langle\alpha_{s}, y\rangle \langle x, \alpha_{s}^{\vee}\rangle s ,
$$

\noindent note that the algebra $\bH$ is graded, with $W, \hh^{*}$ in degree $0$ and $\hh, \fc$ in degree $1$. We remark that $\bH$ is a flat $S(\fc)$-algebra, see e.g. \cite[Proposition 1.1.1]{losev_completions}, and that $\bH/\fc\bH = \CC[\hh\oplus\hh^{*}]\# W$. 

Let $\R_{\hbar}(H_{c})$ denote the Rees algebra of $H_{c}$ with respect to the filtration described in the previous subsection. We have a quotient map $\bH \longrightarrow \R_{\hbar}(H_{c})$, given by $w \mapsto w$, $x \mapsto x$, $y \mapsto \hbar y$, $\hbar \mapsto \hbar$, $\bc_{i} \mapsto \hbar c_{i}$, $w \in W, x \in \hh^{*}, y \in \hh$ and $c_{i} := c(s)$ for $s \in \cS_{i}$. So we can pass from $H_{c}$-modules to $\bH$-modules using the Rees construction with respect to some filtration, see for example Subsection \ref{subsection:HC}.

\subsection{Category $\cO_{c}$}\label{subsect:catO} The triangular decomposition $H_{c} = \CC[\hh] \otimes \CC W \otimes \CC[\hh^{*}]$ allows us to define a category $\cO_{c}$ of modules over $H_{c}$, \cite{GGOR}. By definition, $\cO_{c}$ is the full subcategory of the category of finitely generated $H_{c}$-modules consisting of those modules for which $\hh$ acts by locally nilpotent endomorphisms. For example, finite dimensional representations, when they exist, belong to $\cO_{c}$. Also, for an irreducible representation $\tau$ of $W$, consider $\tau$ as an $S(\hh)\#W$-module by letting $\hh$ act on $\tau$ by $0$. Then, the \emph{Verma module} $\Delta_{c}(\tau) := \Ind_{S(\hh)\#W}^{H_{c}}\tau = H_{c}\otimes_{S(\hh)\#W}\tau$ belongs to $\cO_{c}$. The Verma module $\Delta_{c}(\tau)$ admits a unique irreducible quotient, $L_{c}(\tau)$. Moreover,  the set $\{L_c(\tau): \tau \; \text{an irreducible representation of} \; W\}$ forms a complete and irredundant collection of irreducible objects in $\cO_{c}$. 

A module $M \in \cO_{c}$ is finitely generated over the subalgebra $\CC[\hh] \subseteq H_{c}$, so it can be viewed as a coherent sheaf on $\hh$. Hence, we may define its support $\supp(M)$ as the support of $M$ as a $\CC[\hh]$-module. This is a $W$-invariant subvariety of $\hh$. In fact, it is a union of strata of the stratification of $\hh$ with respect to stabilizers in $W$ of points in $\hh$. We remark that the support of an irreducible module $N \in \cO_{c}$ is irreducible when viewed as a subvariety of $\hh/W$, see \cite[Proposition 3.22]{BE}, and that a module $M \in \cO_{c}$ is finite dimensional if and only if its support consists of only the 0 point.

\subsection{Hecke algebras.} Let $B_W:= \pi_1(\hh^{reg}/W)$ be the generalized braid group associated to $W$. The group $B_{W}$ admits a system of generators indexed by the set $\cA$ of reflection hyperplanes on $\hh$. For each $\Gamma \in \cA$, the pointwise stabilizer $W_\Gamma$ is cyclic, of order say $\ell_\Gamma$. Let $s_{\Gamma} \in \cS\cap W_\Gamma$ be the element with determinant $\exp(2\pi\sqrt{-1}/\ell_\Gamma)$, and let $T_{\Gamma}$ be a generator of the monodromy around $\Gamma$ such that a lift of $T_\Gamma$ to $\hh^{reg}$ is represented by a path from $x_0$ to $s_\Gamma(x_0)$, see \cite[Appendix 1]{BMR} for a precise definition. The set $\{T_\Gamma\}_{\Gamma \in \cA}$ is a generating set for the group $B_W$. 

To define the Hecke algebra, for each reflection hyperplane $\Gamma \in \cA$, fix nonzero complex numbers $q_{\Gamma,0}, \dots$, $q_{\Gamma, \ell_{\Gamma} -1}$, in such a way that if $\Gamma, \Gamma'$ are $W$-conjugate then $q_{\Gamma, i} = q_{\Gamma', i}$ for each $i = 0, \dots, \ell_{\Gamma} - 1 = \ell_{\Gamma'} - 1$. We denote this collection of complex numbers by $q$. The Hecke algebra $\cH_{q}$ is, by definition, the quotient of the group algebra $\CC B_{W}$ by the relations $\prod_{i = 0}^{\ell_{\Gamma}-1}(T_{\Gamma} - q_{\Gamma, i})$, one for each $\Gamma \in \cA$. For example, setting $q_{\Gamma,i} = \exp(2\pi\sqrt{-1}i/\ell_{\Gamma})$ we recover the group algebra $\CC W$. We note that for each map $a: \cA \longrightarrow \CC^{\times}$ that is constant on $W$-conjugacy classes we can rescale the parameters $q_{\Gamma, i} \mapsto a(\Gamma)q_{\Gamma,i}$ without changing the algebra, cf. \cite[Subsection 3.3.3]{rouquier}. Most of the time below, we will normalize so that $q_{\Gamma,0} = 1$ for all $\Gamma \in \cA$.

\begin{rmk}
For the Hecke algebra of a Coxeter group, we use the normalization of the Hecke algebra $\cH_q$ whose quadratic relations read $(T_i - 1)(T_i + q) = 0$, which is not standard. Hence, our $T_i$'s differ from those of \cite{DiJa} by a factor of $q$, and our $q$ corresponds to $q^{-1}$ in \cite{DiJa}. Using this normalization, we say that an $\mathcal{H}_q$-module is trivial if all $T_{i}$'s act by $1$. See also remark after Theorem 2.6 in \cite{DiJa}. 
\end{rmk}

\subsection{KZ functor} In \cite{GGOR} it is shown that there exists a quotient functor $\KZ: \cO_{c} \rightarrow \cH_{q}(W)\modd$, the category of finite dimensional $\cH_{q}(W)$-modules, where the parameter $q$ explicitly depends on $c$ as follows. Recall that for each reflection $s \in \cS, \lambda_{s}$ denotes the unique non-trivial eigenvalue for the action of $s$ on $\hh^{*}$. For each reflection hyperplane $\Gamma \in \cA$, define 

\begin{equation}\label{eqn:h}
k_{\Gamma, i} := \sum_{s \in \cS\cap W_{\Gamma}} \frac{2c(s)}{1 - \lambda_{s}}(\lambda_{s}^{-i} - 1), \; \; \; \; i = 0, \dots, \ell_{\Gamma} - 1.
\end{equation}

Note that $k_{\Gamma, i}$ depends only on the conjugacy class of $\Gamma$, and that $k_{\Gamma, 0} = 0$. Now the parameter $q$ is computed as follows:

\begin{equation}\label{eqn:q}
q_{\Gamma, i} := \exp(2\pi\sqrt{-1}(k_{\Gamma, i} - i)/\ell_{\Gamma}).
\end{equation}

Note that $q_{\Gamma, 0} = 1$. Now we give the construction of the KZ functor. Start with a module $M \in \cO_{c}$. Then, the localization $M[\delta^{-1}]$ is a $\cD(\hh^{reg})\#W$-module. Since $W$ acts freely on $\hh^{reg}$, there is an equivalence of categories $\cD(\hh^{reg})\#W\!\modd \rightarrow \cD(\hh^{reg}/W)\modd$, given by $M \mapsto eM$. Since $M \in \cO_{c}$ is a $\CC[\hh]$-coherent module, we have that $eM[\delta^{-1}]$ must be a local system. Hence, $DR(eM[\delta^{-1}])$ is a representation of the braid group $B_W$, where $DR$ stands for the de Rham functor, see e.g. \cite[Chapter 7]{HTT}. By \cite[Theorem 5.13]{GGOR}, the action of $\CC B_W$ on $DR(eM[\delta^{-1}])$ factors through the Hecke algebra $\cH_{q}$. We abbreviate $\KZ(M) := DR(eM[\delta^{-1}])$.

For example, let $\tau$ be a 1-dimensional representation of $W$. For a hyperplane $\Gamma \in \cA$, let $s_{\Gamma}$ be a generator of $W_{\Gamma}$ with $\lambda_{s_{\Gamma}}^{-1} = \exp(2\pi\sqrt{-1}/\ell_{\Gamma})$. Denote by $\CC_{\Gamma, i}$ the 1-dimensional representation of $W_{\Gamma}$ where $s_{\Gamma}$ acts by $\lambda_{s_{\Gamma}}^{-i}$. In particular, we have that $\Res^{W}_{W_{\Gamma}}\tau = \CC_{\Gamma, \tau(\Gamma)}$ for $\tau(\Gamma) \in \{0, \dots, \ell_{\Gamma} - 1\}$.  Then, we have that $\KZ(\Delta(\tau))$ is the 1-dimensional $\cH_{q}$-module where $T_{\Gamma}$ acts by $q_{\Gamma, \Gamma(\tau)}$. In particular, $\KZ(\Delta(\triv))$ is the trivial representation of $\cH_{q}$. All claims in this paragraph follow easily from an explicit isomorphism $H_{c}[\delta^{-1}] \cong \cD(\hh^{reg})\#W$, which is given in terms of Dunkl-Opdam operators, see e.g. \cite[Proposition 4.5]{EG}. 

We remark that the local system $eM[\delta^{-1}]$ has regular singularities, see e.g. \cite[Section 5.3]{GGOR}. It follows, in particular, that the $\KZ$ functor is exact. Moreover, the essential image of $\KZ$ consists of all finite dimensional modules over the algebra $\cH_{q}$, \cite{losev_hecke}. The kernel of $\KZ$, $\cO_{c}^{tor}$, consists of those modules $M$ for which $\supp(M)$ is properly contained in $\hh$. This is a Serre subcategory of $\cO_{c}$. Then, $\KZ$ induces an equivalence $\cO_{c}/\cO_{c}^{tor} \cong \cH_{q}\modd$ where, recall, the latter category is that of finite dimensional $\cH_{q}$-modules.

\subsection{Lattice $\fp_{\ZZ}$}\label{subsect:lattice} Let $\fp^{*}:= \fc/\hbar \cong \CC^{|\cS/W|}$, a vector space with basis $\bc_{1}, \dots, \bc_{r}$. We remark that the set of parameters \lq$c$\rq\; for the Cherednik algebra $H_{c}$ can be naturally identified with $\fp$, the dual of $\fp^{*}$. So we can view $k_{\Gamma, i}$ as an element of $\fp^{*}$, its value on a parameter $c \in \fp$ is given by the formula (\ref{eqn:h}), in particular, $k_{\Gamma, 0}$ is the constant function $0$. Define $\fp^{*}_{\ZZ}$ to be the $\ZZ$-lattice inside $\fp^{*}$ spanned by elements $\ell_{\Gamma}^{-1}k_{\Gamma, i}$, and let $\fp_{\ZZ} \subseteq \fp$ be the dual lattice. Note that the lattice $\fp_{\ZZ}$ consists of all parameters $c$ such that $q_{\Gamma, i} = \eta_{\Gamma}^{-i}$ for every $\Gamma \in \cA$ and $i = 0, \dots, \ell_{\Gamma} - 1$. Moreover, we have the following.

\begin{rmk}\label{rmk:integral}
	For $c, c' \in \fp$, we have $c - c' \in \fp_{\ZZ}$ if and only if $q(c)_{\Gamma, i} = q(c')_{\Gamma, i}$ for every $\Gamma \in \cA$ and $i = 0, \dots, \ell_{\Gamma} - 1$. 
\end{rmk} 

Thus, the set of parameters for the Hecke algebra can be identified with $\fp/\fp_{\ZZ}$, see \cite[Subsection 2.6]{losev_derived}.  For example, if $W$ is a Coxeter group, then $\fp_{\ZZ}$ coincides with the set of parameters for which $c(s) \in \ZZ$ for all $s \in \cS$.

We will need a spanning set for $\fp_{\ZZ}$. First, let us introduce some notation. For each $\Gamma \in \cA$, the set of characters of $W_{\Gamma}$ is identified with $\ZZ/\ell_{\Gamma}\ZZ$, an isomorphism $\ZZ/\ell_{\Gamma}\ZZ \buildrel \cong \over \longrightarrow \Hom(W_{\Gamma}, \CC^{\times})$ is given by $m \mapsto (s \mapsto \det(s)^{m})$. We have a morphism $\Hom(W, \CC^{\times}) \longrightarrow \prod_{\Gamma \in \cA/W} \Hom(W_{\Gamma}, \CC^{\times})$, given by restriction. This is an isomorphism, \cite[Subsection 3.3.1]{rouquier}. Thus, we have a correspondence between 1-dimensional characters $\chi$ of $W$ and $|\cA/W|$-tuples of integers $(m_{\Gamma})$ with $0 \leq m_{\Gamma} \leq \ell_{\Gamma} - 1$.  So, to a character $\chi \in \Hom(W, \CC^{\times})$ associated to the tuple $(m_{\Gamma})$ we assign $\overline{\chi} \in \fp$, given by 

$$
\overline{\chi}(k_{\Gamma, i}) = \begin{cases} \ell_{\Gamma}  & \text{if} \; i \geq \ell - m_{\Gamma} \\ 0 & \text{if} i < \ell - m_{\Gamma}.\end{cases}
$$

\noindent Clearly, $\overline{\chi} \in \fp_{\ZZ}$, and the elements $\overline{\chi}$ form a spanning set for $\fp_{\ZZ}$. 

Let us explain the reason why we are interested in the elements $\overline{\chi}$. Recall that we have an embedding $H_{c} \hookrightarrow \cD(\hh^{reg})\#W$. We have an automorphism of $\cD(\hh^{reg})\#W$, given by $w \mapsto \chi(w)w$, $x \mapsto x$, $\partial_{y} \mapsto \partial_{y}$, $w \in W$, $x \in \hh^{*}$, $y \in \hh$. Then, according to \cite[Section 5.1]{BC}, under this automorphism $H_{c}$ transforms to $H_{c + \overline{\chi}}$, while $eH_{c}e$ transforms to $e_{\chi}H_{c + \overline{\chi}}e_{\chi}$. Here, $e_{\chi}$ denotes the idempotent corresponding to $\chi$, $e_{\chi} = |W|^{-1}\sum_{w \in W}\chi(w^{-1})w$. We will use this in Subsection \ref{subsect:proof full support}, see in particular Lemma \ref{lemma:naminonzero}.
 
\subsection{Restriction functors for category $\cO$}\label{subsect:resO} We remark that, if $W'$ is a parabolic subgroup of $W$ (that is, the stabilizer of a point in $\hh$) then there is a natural inclusion of algebras $\iota: \cH_{q}(W') \hookrightarrow \cH_{q}(W)$ where, abusing the notation, we also denote by $q$ its restriction to $\cS \cap W'$. This allows us to define a restriction functor $^{\cH}\!\Res^{W}_{W'} := \iota^{*}: \cH_{q}(W)\modd \rightarrow \cH_{q}(W')\modd$. 

There is also a restriction functor on the level of category $\cO$, \cite{BE}. This functor depends on the choice of a point $b \in \hh$ whose stabilizer $W_{b}$ coincides with $W'$. For distinct $b, b'$ with this property, the functors are isomorphic (but not canonically so) so we will just denote this functor by $\Res^{W}_{W'}$. This functor is defined as follows. Let $b \in \hh$ be a point with $W_{b} = W'$. We will denote also by $b$ its projection to $\hh/W$. Consider $\CC[\hh/W]^{\wedge b}$, the completion of $\CC[\hh/W]$ at the maximal ideal of $b$. Then it can be easily checked that $H_{c}^{\wedge b} := \CC[\hh/W]^{\wedge b}\otimes_{\CC[\hh/W]}H_{c}$ is naturally an algebra. Bezrukavnikov and Etingof showed in \cite[Theorem 3.2]{BE} that $H_{c}^{\wedge b}$ is isomorphic to a matrix algebra of size $|W/W'|$ with coefficients in $H_{c}(W', \hh)^{\wedge 0}$, see Subsection \ref{subsect:BE} for a more precise statement. For a module $M$ in category $\cO$, let $E(M^{\wedge b})$ denote the $\hh$-locally nilpotent part of $M^{\wedge b}$ where, abusing the notation, we denote by $M^{\wedge b}$ the image of the completion $\CC[\hh/W]^{\wedge b}\otimes_{\CC[\hh/W]} M$ under an equivalence $H_{c}^{\wedge b}\modd \longrightarrow H_{c}(W', \hh)^{\wedge 0}\modd$. Then, $\Res^{W}_{W'}(M)$ is defined to be $\{v \in E(M^{\wedge b}) : yv = 0 \; \text{for all} \; y \in \hh^{W'}\}$. This is a module in category $\cO$ for $H_{c}(W', \hh/\hh^{W'})$, cf. \cite[Section 2]{BE}. 

We will also need a certain compatibility between the restriction functors and the KZ functor that was stablished in \cite[Section 2]{shan}. Namely, let $W'$ be a parabolic subgroup of $W$. Then, as is checked in \cite[Theorem 2.1]{shan}, we have $\underline{\KZ}\circ \Res^{W}_{W'} = \, ^{\cH}\!\Res^{W}_{W'}\circ\KZ$, where $\underline{\KZ}$ denotes the KZ functor from category $\cO_{c}(W', \hh/\hh^{W'})$ to $\cH_{q}(W')$. 

\subsection{Harish-Chandra bimodules.}\label{subsection:HC} By a HC $H_{c}\text{-}H_{c'}$-bimodule we mean a finitely generated $H_{c}\text{-}H_{c'}$-bimodule such that the adjoint action of every element of $\CC[\hh]^W\cup\CC[\hh^*]^W$ is locally nilpotent. For example, the algebra $H_c$ is always a HC $H_c$-bimodule. Clearly, the category $\HC(c,c')$ of HC $H_{c}\text{-}H_{c'}$-bimodules is a Serre subcategory of the category of all $H_{c}\text{-}H_{c'}$-bimodules. The following proposition gives basic properties of HC bimodules. For its proof, see e.g. Lemma 3.3 in \cite{BEG}, or Proposition 3.1 in \cite{losev_derived}.

\begin{prop}\label{prop:HCproperties}
\begin{enumerate}
\item Any $B \in \HC(c,c')$ is finitely generated as a left $H_{c}$-module, as a right $H_{c'}$-module, and as a $\CC[\hh]^W\otimes\CC[\hh^*]^W$-module (here, $\CC[\hh]^W$ is considered inside $H_c$, while $\CC[\hh^*]^W$ is considered inside $H_{c'}$).
\item If $B \in \HC(c,c')$, $B' \in \HC(c',c'')$ then $B \otimes_{H_{c'}} B' \in \HC(c,c'')$.
\item If $B \in \HC(c,c')$ and $M \in \cO_{c'}$, then $B \otimes_{H_{c'}} M \in \cO_{c}$.
\end{enumerate}
\end{prop}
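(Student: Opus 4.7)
The plan is to handle the three claims in sequence, leaning on the PBW/triangular structure of Cherednik algebras and on the fact that $\CC[\hh \oplus \hh^*]$ is a finite free module over $\CC[\hh]^W \otimes \CC[\hh^*]^W$.

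For part (1), I would equip $V$ with a good bimodule filtration compatible with the PBW filtrations on $H_c$ and $H_{c'}$, so that $\gr V$ becomes finitely generated over $(\CC[\hh\oplus\hh^*]\# W)^{\otimes 2}$. The Harish-Chandra hypothesis translates, in the graded picture, to the vanishing of $a\otimes 1 - 1\otimes a$ on $\gr V$ for every $a \in \CC[\hh]^W \cup \CC[\hh^*]^W$. By Chevalley-Shephard-Todd this forces $\gr V$ to be finite over the diagonally embedded $\CC[\hh]^W \otimes \CC[\hh^*]^W$, and lifting through the filtration yields the same for $V$. One-sided finite generation over $H_c$ and $H_{c'}$ is then extracted from this: right multiplication by an element $b \in \CC[\hh^*]^W$ can be rewritten as $vb = bv - \mathrm{ad}(b)(v)$, so the right action of each $\CC[\hh^*]^W$-generator only enlarges the set of left generators by a finite ad-orbit.

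For part (2), bimodule generators of $V\otimes_{H_{c'}}V'$ are produced from $v_i\otimes v'_j$, where $v_i, v'_j$ are bimodule generators of $V$ and $V'$. Since any $a \in \CC[\hh]^W \cup \CC[\hh^*]^W$ lies inside $H_{c'}$ and can be slid across the tensor sign, $\mathrm{ad}(a)$ acts on the tensor product as a derivation, and the Leibniz identity
\[
\mathrm{ad}(a)^n(v\otimes v') = \sum_{k=0}^n \binom{n}{k}\mathrm{ad}(a)^k(v)\otimes \mathrm{ad}(a)^{n-k}(v')
\]
vanishes once $n$ exceeds the sum of the local nilpotency indices of $\mathrm{ad}(a)$ on $v$ and on $v'$.

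For part (3), set $X = V\otimes_{H_{c'}}M$. Finite generation of $X$ over $H_c$ is immediate from tensoring generators. The substantive task is local nilpotency of $\hh \subseteq H_c$ on $X$. I would first verify that $(\CC[\hh^*]^W)_+$ acts locally nilpotently via the computation
\[
a^n(v\otimes m) = \sum_{k=0}^n \binom{n}{k}\mathrm{ad}(a)^k(v)\otimes a^{n-k}m,
\]
where $a$ is slid across the tensor as an element of $H_{c'}$: the first factor is eventually killed by the Harish-Chandra property on $V$, and the second by $M \in \cO_{c'}$, since $(\CC[\hh^*]^W)_+$ lies in the ideal of $\CC[\hh^*]$ generated by $\hh$. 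To upgrade from $(\CC[\hh^*]^W)_+$ to $\hh$ itself, one uses that $\CC[\hh^*]$ is finite over $\CC[\hh^*]^W$, so $(\CC[\hh^*]^W)_+\cdot\CC[\hh^*]$ and $\CC[\hh^*]_+$ share a radical in $\CC[\hh^*]$; combined with Noetherianity applied to a finite $H_c$-generating set of $X$, this yields a uniform exponent $N$ with $\hh^N \cdot X = 0$ on each generator, hence local nilpotency of $\hh$. I expect the final upgrade in part (3) to be the main obstacle: it is the only place where the argument leaves pure commutator manipulation, requires the integral-extension/Nullstellensatz step, and demands some care to produce a uniform annihilation exponent across all of $X$ rather than only on elementary tensors.
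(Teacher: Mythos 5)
The paper does not actually give a proof of this proposition; it cites \cite[Lemma 3.3]{BEG} and \cite[Proposition 3.1]{lo2} instead. Your argument is essentially the one in those references, and the overall strategy — pass to a good filtration where the left and right invariant actions agree on $\gr V$, invoke Chevalley--Shephard--Todd, then use the commutator identity $vb = bv - \ad(b)(v)$ and Leibniz expansions — is the right one. Two points in the write-up deserve flagging.

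First, the assertion that a good bimodule filtration can be chosen so that $a\otimes 1 - 1\otimes a$ vanishes on $\gr V$ for $a \in \CC[\hh]^W \cup \CC[\hh^*]^W$ is not automatic: $\ad(a)$ being locally nilpotent does not by itself force a strict drop in filtration degree for an arbitrary good filtration, and one must construct the filtration with this property. This is precisely \cite[Proposition 5.4.3]{Lo}, which the paper itself invokes a few lines below this proposition; in a self-contained proof you would need to either reproduce that construction or cite it.

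Second, in part (3) the final step is slightly off. After you verify that $(\CC[\hh^*]^W)_+$ acts locally nilpotently on $X$ and upgrade via the Nullstellensatz/coinvariant-algebra fact $\CC[\hh^*]_+ = \sqrt{(\CC[\hh^*]^W)_+\CC[\hh^*]}$, you conclude local nilpotency of $\hh$ by producing a ``uniform exponent $N$ with $\hh^N$ killing each generator'' and then passing to all of $X$. The uniform exponent across all of $X$ is not what is required (and would be false, e.g.\ for Verma modules); what is required — and what is true — is that local nilpotency of $\hh$ on a set of $H_c$-generators propagates to all of $X$. That propagation is not a Noetherianity statement: it rests on the fact that $\hh$ acts locally \emph{ad}-nilpotently on $H_c$ (which follows from the grading with $\deg\hh = -1$, $\deg\hh^* = 1$, $\deg W = 0$), so that the subspace of $X$ on which $\hh$ acts locally nilpotently is an $H_c$-submodule. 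You should make that observation explicit; once you do, the ``uniform exponent'' is unnecessary. Apart from these two points — one a citation you should add, the other a short missing lemma — the proof is sound, and your finite-generation arguments in (2) and (3) (surject a free module $H_{c'}^{\oplus k}$ onto $M$ or $V'$, tensor over $H_{c'}$ with $V$, and use part (1)) are correct once unwound.
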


A way to construct HC bimodules is as follows. Consider modules $N \in \cO_{c}$, $M \in \cO_{c'}$. Then, $\Hom_{\CC}(M, N)$ is an $H_{c}\text{-}H_{c'}$-bimodule. By $\homf{M}{N}$ we mean the sub-bimodule of $\Hom_{\CC}(M, N)$ consisting of all those vectors that are locally nilpotent under the adjoint action of $\CC[\hh]^W\cup\CC[\hh^*]^W$. Clearly, $\homf{M}{N}$ is the direct limit (= union) of its HC sub-bimodules. It was checked in \cite[Proposition 5.7.1]{losev_completions} that $\homf{M}{N}$ is actually HC. We remark, \cite[Lemma 5.7.2]{losev_completions}, that if $M$ and $N$ are irreducible then $\homf{M}{N} = 0$ unless $\supp(M) = \supp(N)$. Also, note that $\homf{M}{N} \neq 0$ if and only if there exists a HC $H_{c}\text{-}H_{c'}$-bimodule $B$ and a nonzero morphism of left $H_{c}$-modules $B \otimes_{H_{c'}} M \rightarrow N$.

\begin{prop}\label{prop:inclocfin}
Let $B$ be an irreducible HC $H_{c}\text{-}H_{c'}$-bimodule. Then, there exist irreducible modules $M \in \cO_{c'}$, $N \in \cO_{c}$ and a monomorphism $B \hookrightarrow \homf{M}{N}$.
\end{prop}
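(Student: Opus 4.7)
I would construct the embedding via tensor-hom adjunction. Assuming I can find a simple $M \in \cO_{c'}$ with $V \otimes_{H_{c'}} M \neq 0$, Proposition~\ref{prop:HCproperties}(3) places this tensor product in the artinian category $\cO_c$, so it admits a simple quotient $N \in \cO_c$; write $\phi\colon V \otimes_{H_{c'}} M \twoheadrightarrow N$. Standard tensor-hom adjunction then produces a nonzero $H_c$-$H_{c'}$-bimodule map
\[
\psi\colon V \longrightarrow \Hom_\CC(M, N), \qquad \psi(v)(m) = \phi(v \otimes m),
\]
which will be the candidate for the monomorphism.

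The next step is to check that $\psi(V) \subseteq \homf{M}{N}$ and that $\psi$ is injective. For any $a \in \CC[\hh]^W \cup \CC[\hh^*]^W$, realised canonically inside both $H_c$ and $H_{c'}$, the bimodule equivariance of $\psi$ gives $(\ad a)^k \psi(v) = \psi((\ad a)^k v)$ for every $k$, so the local nilpotence of $\ad a$ on $V$ (the Harish-Chandra condition) transfers directly to $\psi(v)$, placing it in $\homf{M}{N}$. Because $V$ is simple and $\psi$ is nonzero, $\ker \psi$ is a proper sub-bimodule and therefore vanishes, yielding the desired embedding.

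The main obstacle is the very first step: producing a simple $M \in \cO_{c'}$ with $V \otimes_{H_{c'}} M \neq 0$. I would argue by contradiction. If $V \otimes_{H_{c'}} L_{c'}(\tau) = 0$ for every $\tau \in \mathrm{Irr}(W)$, then right-exactness of $V \otimes_{H_{c'}} \bullet$ together with induction on length in $\cO_{c'}$ propagates the vanishing to every Verma, and the explicit computation
\[
V \otimes_{H_{c'}} \Delta_{c'}(\tau) \;\cong\; \bigl(V/V\cdot \CC[\hh^*]_+\bigr) \otimes_{\CC W} \tau
\]
(with $\CC[\hh^*] \subset H_{c'}$ acting on the right) forces $V = V \cdot \CC[\hh^*]_+$. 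Iterating gives $V = V\cdot \CC[\hh^*]_+^k$ for every $k$, and Chevalley--Shephard--Todd yields $\CC[\hh^*]_+^k \subseteq \CC[\hh^*]^W_+ \cdot \CC[\hh^*]$ for large $k$, hence $V = V \cdot \CC[\hh^*]^W_+ \cdot \CC[\hh^*]$. Combined with the finite generation of $V$ as a $\CC[\hh]^W \otimes \CC[\hh^*]^W$-module from Proposition~\ref{prop:HCproperties}(1) and the local nilpotence of $\ad\, \CC[\hh^*]^W$ on $V$, this should give a Nakayama-style contradiction. The delicate point is trading right multiplication by $\CC[\hh^*]^W_+$ for left multiplication via the adjoint action (using the commuting operators $L_p$ and $R_p$ for $p \in \CC[\hh^*]^W$ and the binomial expansion of $(L_p - R_p)^N$) in order to set up the Nakayama step against the finite generation datum.
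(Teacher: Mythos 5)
Your tensor--hom half of the argument is exactly what the paper does: find a simple $M \in \cO_{c'}$ with $V \otimes_{H_{c'}} M \neq 0$, project onto a simple quotient $N$ of $V \otimes_{H_{c'}} M$ (which lies in $\cO_c$ by Proposition~\ref{prop:HCproperties}(3)), use adjunction to get a nonzero bimodule map $\psi\colon V \to \Hom_\CC(M,N)$, observe that $(\ad a)\psi(v) = \psi((\ad a)v)$ forces $\psi(V) \subseteq \homf{M}{N}$, and conclude injectivity from simplicity of $V$. That part is correct.

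The divergence is in the one step the paper treats as a black box: the existence of a simple $M$ with $V \otimes_{H_{c'}} M \neq 0$ is simply cited from~\cite[Lemma 3.10]{lo2}, whereas you try to prove it. Your reduction is fine up to a point: right-exactness and induction on length give $V \otimes_{H_{c'}} \Delta_{c'}(\tau) \cong (V/V\!\cdot\!S(\hh)_+)\otimes_{\CC W}\tau = 0$ for every $\tau$, hence $V = V\!\cdot\! S(\hh)_+$; iterating and invoking Chevalley--Shephard--Todd gives $V = V\cdot \CC[\hh^*]^W_+$. But the final Nakayama step, which you yourself flag as ``delicate,'' is genuinely open in your sketch, and I do not think the tool you name closes it. The ring $\CC[\hh]^W \otimes \CC[\hh^*]^W$ is not local, so finite generation together with $V = V\cdot B_+$ (for $B = \CC[\hh^*]^W$) does \emph{not} force $V=0$; by the determinant trick, it only says that $\supp_{A\otimes B}(V)$ misses the zero-section $\hh/W \times \{0\}$. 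The binomial expansion of $(L_p - R_p)^N$ does not obviously rule this out: it produces $L_p^N v \in \CC[\hh^*]^W\, V\, B_+$, where the left factor $\CC[\hh^*]^W \subset H_c$ is not one of the operators you control via the $A\otimes B$ finite-generation datum and does not commute with $A = \CC[\hh]^W$ inside $H_c$. What is actually needed is the fact that the associated variety $\cV(V)$ of a HC bimodule is a cone (a union of closures of symplectic leaves), so that $(0,0)$ lies in $\supp_{A\otimes B}(V) \supseteq \supp_{A\otimes B}(\gr V)$; Nakayama at $(0,0)$ then gives the contradiction. That conic-support input (equivalently, the content of~\cite[Lemma 3.10]{lo2}) is missing from your argument, so the proposal has a real gap at precisely the point where the paper instead cites a lemma.
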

\begin{proof}
By \cite[Lemma 3.10]{losev_derived}, there exists an irreducible module $M \in \cO_{c'}$ with $B \otimes_{H_{c'}} M \neq 0$. Since the latter module is in category $\cO_{c}$, there exists an irreducible module $N \in \cO_{c}$ and a nonzero map $f: B \otimes_{H_{c'}} M \rightarrow N$. Then, $v \mapsto (m \mapsto f(v\otimes_{H_{c'}} m))$ defines a nonzero morphism $B \rightarrow \homf{M}{N}$.
\end{proof}

An equivalent definition of HC bimodules was found in \cite[Section 3]{losev_completions}. Namely, recall that the algebras $H_{c}, H_{c'}$ are filtered and there is a natural identification $\gr H_{c} = \gr H_{c'} = \CC[\hh\oplus\hh^{*}]\#W$, see Subsection \ref{subsection:RCA}. Let $B$ be a filtered $H_{c}\text{-}H_{c'}$-bimodule. Then, $\gr B$ is a $\CC[\hh\oplus\hh^{*}]\#W$-bimodule. It is proved in \cite[Subsection 5.4]{losev_completions}, that $B$ is HC if and only if it admits a bimodule filtration such that $\gr B$ is a finitely generated $\CC[\hh\oplus\hh^{*}]\#W$-bimodule and, moreover, the left and right actions of $\CC[\hh\oplus\hh^{*}]^{W} = Z(\CC[\hh\oplus\hh^{*}]\#W)$ on $\gr B$ coincide. We call such a filtration on $B$ {\it good}. Note that this implies that $\gr B$ is a finitely generated \emph{module} over $\CC[\hh\oplus\hh^{*}]^{W}$. Then, we define the (singular) support of $B$, $\Supp(B) \subseteq (\hh\oplus\hh^{*})/W$ to be the support of $\gr B$ as a $\CC[\hh\oplus\hh^{*}]^{W}$-module. We remark that, as usual, $\gr B$ depends on the filtration, but its support does not. We also remark that $\Supp(B)$ is a Poisson subvariety of $(\hh\oplus\hh^{*})/W$ and, moreover, it is a union of symplectic leaves. The variety $(\hh\oplus\hh^{*})/W$ has finitely many symplectic leaves, see e.g. \cite[Subsection 7.4]{BrGo}.

\begin{lemma}[\cite{losev_bernstein}, Lemma 4.2]\label{lemma:annihilators}
Let $B$ be a HC $H_{c}\text{-}H_{c'}$-bimodule. Then, $\Supp(B) = \Supp(H_{c}/\lann(B)) = \Supp(H_{c'}/\rann(B))$, where $\lann(B), \rann(B)$ denote the left and right annihilator of $B$, respectively, and $H_{c}/\lann(V)$ (resp. $H_{c'}/\rann(V)$) is viewed as a HC $H_{c}$-bimodule (resp. HC $H_{c'}$-bimodule).
\end{lemma}

\begin{lemma}
Let $B$ be an irreducible HC $H_{c}\text{-}H_{c'}$-bimodule, and let $N \in \cO_{c'}$ be irreducible. Then, $B \otimes_{H_{c'}} N = 0$ unless $\rann(B) = \ann(N)$. If $B \otimes_{H_{c'}}N \neq 0$, then the annihilator of every irreducible quotient of $B \otimes_{H_{c'}} N$ coincides with $\lann(B)$. Moreover, $\lann(B) = \ann(B \otimes_{H_{c'}}N)$.
\end{lemma}
\begin{proof}
Assume $B \otimes_{H_{c'}} N \neq 0$, and let $M$ be an irreducible quotient of $B \otimes_{H_{c'}} N$. Then, as in Proposition \ref{prop:inclocfin}, we have an inclusion $B \hookrightarrow \homf{N}{M}$. First, we check that $\rann(B) = \ann(N)$. Note that $\bigcap_{f \in B}\ker(f)$ is a proper submodule of $N$. Since $N$ is irreducible, we must have $\bigcap_{f \in B}\ker(f) = 0$. Now, if $a \in \rann(B)$, then $aN \subseteq \bigcap_{f \in B}\ker(f)$, so $a \in \ann(N)$. The other inclusion is clear. To show that $\lann(B) = \ann(M)$, we observe that $\sum_{f \in B} f(N)$ is a nonzero submodule of $M$, so we must have $\sum_{f \in B}f(N) = M$. From here it follows easily that $\lann(B) = \ann(M)$. To prove the last statement of the lemma, note that we clearly have $\lann(B) \subseteq \ann(B \otimes_{H_{c'}}N)$. On the other hand, by what we just proved $\ann(B \otimes_{H_{c'}}N) \subseteq \ann(M) = \lann(B)$. So $\lann(B) = \ann(B \otimes_{H_{c'}} N)$.
\end{proof}

\begin{cor}
Let $B_1$ be an irreducible HC $H_{c}\text{-}H_{c'}$-bimodule, and $B_2$ an irreducible HC $H_{c'}\text{-}H_{c''}$-bimodule. Then, $B_1 \otimes_{H_{c'}} B_2 = 0$ unless $\Supp(B_1) = \Supp(B_2)$.
\end{cor}
\begin{proof}
Assume that $\Supp(B_1) \neq \Supp(B_2)$, and denote $B := B_1 \otimes_{H_{c'}} B_2$. First, we assume that $\Supp(B_1) \not\subseteq \Supp(B_2)$. By \cite[Lemma 3.10]{losev_derived} it is enough to show that $B \otimes_{H_{c''}} N = 0$ for all irreducible modules $N \in \cO_{c''}$. If $B_2 \otimes_{H_{c''}} N = 0$ we are done. So we may assume that $B_2 \otimes_{H_{c''}} N \neq 0$. By the previous lemma this implies that $\ann(B_2 \otimes_{H_{c''}} N) = \lann(B_2)$. If $B_1 \otimes_{H_{c'}} (B_2 \otimes_{H_{c''}} N) \neq 0$, then $B_1 \otimes_{H_{c'}} M \neq 0$ for some irreducible subquotient $M$ of $B_2 \otimes_{H_{c''}} N$. So $\rann(B_1) = \ann(M) \supseteq \ann(B_2 \otimes_{H_{c}} N) = \lann(B_2)$. Thus, $\Supp(B_1) = \Supp(H_{c'}/\rann(B_1)) \subseteq \Supp(H_{c'}/\lann(B_2)) = \Supp(B_2)$, a contradiction with our assumption. We conclude that $B_1 \otimes_{H_{c'}} B_2 = 0$. 

Now assume that $\Supp(B_2) \not\subseteq \Supp(B_1)$. Let $c^{\opp}$ be defined by $c^{\opp}(s) := -c(s^{-1})$. Then, it is easy to check that we have an isomorphism $H_{c}(W, \hh) \rightarrow H_{c^{\opp}}(W, \hh^{*})^{\opp}$ given by $x \mapsto x, y \mapsto y, w \mapsto w^{-1}$, $x \in \hh^{*}, y \in \hh, w \in W$. We get an equivalence $\rho_{c,c'}: H_{c}\text{-}H_{c'}\bimod \rightarrow H_{(c')^{\opp}}(W, \hh^{*})\text{-}H_{c^{\opp}}(W,\hh^{*})\bimod$. Similarly, we get equivalences $\rho_{c',c''}, \rho_{c,c''}$. Note that these equivalences preserve the categories of HC bimodules as well as the support of a HC bimodule. We have that $\rho_{c,c''}(B_1 \otimes_{H_{c'}} B_2) = \rho_{c',c''}(B_2)\otimes_{H_{(c')^{\opp}}(W, \hh^{*})} \rho_{c,c'}(B_1)$. Thus, the result in this case follows from the previous paragraph.
\end{proof}

The definition of a HC bimodule given in \cite[Section 3]{losev_completions} also allows us to give a definition of a HC $\bH$-bimodule in such a way that, if $B$ is a HC $H_{c}\text{-}H_{c'}$-bimodule with a good filtration, then $\R_{\hbar}(B)$ is a HC $\bH$-bimodule (recall that $\R_{\hbar}(H_{c})$, $\R_{\hbar}(H_{c'})$ are quotients of $\bH$). 

\begin{definition}\label{def_HCbimod}
A HC $\bH$-bimodule $\bB$ is a finitely generated, graded $\bH$-bimodule satisfying the following conditions:
\begin{enumerate}
\item[(i)] The left and right actions of $\hbar$ on $\bB$ coincide.
\item[(ii)] $\bB$ is flat as a $\CC[\hbar]$-module.
\item[(iii)] The left and right actions of $Z(\bH/\hbar\bH)$ on $\bB/\hbar\bB$ coincide.
\end{enumerate}
\end{definition}

Note that, since the grading on $\bH$ is positive, it follows that the grading on any HC $\bH$-bimodule $\bB$ is bounded below. 

We remark that we also have a notion of Harish-Chandra bimodule for spherical rational Cherednik algebras. In fact, the definition does not change, because $\CC[\hh]^{W}, \CC[\hh^{*}]^{W} \subseteq eH_{c}e$. Equivalently, an $eH_{c}e\text{-}eH_{c'}e$-bimodule $B$ is HC if it admits a bimodule filtration so that $\gr B$ is a finitely generated $\CC[\hh\oplus\hh^{*}]^{W}$-module. If the parameter $c$ is spherical, then the categories of HC $H_{c}$-bimodules and HC $eH_{c}e$-bimodules are equivalent, an equivalence is given by $B \mapsto eBe$. This equivalence intertwines the tensor products of HC bimodules.
 
\subsection{Induction and restriction functors for HC bimodules.}\label{subsection:induction} We will need induction and restriction functors for HC bimodules, introduced in \cite[Section 3]{losev_completions}, see also \cite[Section 3]{losev_derived}. Namely, for $x \in (\hh \oplus \hh^{*})/W$, let $W'$ be the stabilizer of a corresponding point $\overline{x} \in (\hh \oplus \hh^{*})$, and let $\Xi:= N_{W}(W')/W'$. There is a decomposition $\hh = \hh^{W'} \oplus \hh_{W'}$, where $\hh_{W'}$ is a unique $W'$-invariant complement of $\hh^{W'}$ in $\hh$. Let $\underline{H}_{c}$ be the Cherednik algebra for the pair $(W', \hh_{W'})$. The group $N_{W}(\underline{W})$ acts on $\underline{H}_c$ by algebra automorphisms in such a way that the action of $\underline{W} \subset N_{W}(\underline{W})$ is the adjoint one, this follows because the group $N_{W}(W')$ setwise stabilizes $\hh_{W'}$. A $\Xi$-equivariant HC $\underline{H}_{c}\text{-}\underline{H}_{c'}$-bimodule is, by definition, a HC $\underline{H}_c\text{-}\underline{H}_{c'}$-bimodule $B$ with a $N_{W}(W')$-action such that the action of $W' \subseteq N_{W}(W')$ coincides with the adjoint action of $W'$, $w'.b = w'bw'^{-1}$, and the structure map $\underline{H}_{c} \otimes B \otimes \underline{H}_{c'} \rightarrow B$ is $N_{W}(W')$-equivariant. Denote by $\HC^{\Xi}(\underline{H}_c)$ the category of $\Xi$-equivariant HC $\underline{H}_c$-bimodules. Then, \cite[Theorem 3.4.6]{losev_completions}, there exists an exact $\CC$-linear functor $\bullet_{\dagger, W'}: \HC(H_c, H_{c'}) \rightarrow \HC^{\Xi}(\underline{H}_c, \underline{H}_{c'})$ that admits a right adjoint $\bullet^{\dagger, W'}: \HC^{\Xi}(\underline{H}_c, \underline{H}_{c'}) \rightarrow \widehat{\HC}(H_c, H_{c'})$, the category of bimodules that are the direct limit of their HC sub-bimodules. The functor $\bullet_{\dagger,W'}$ intertwines the tensor product functors. Moreover, it satisfies the following: for a symplectic leaf $\cL \subseteq (\hh\oplus\hh^*)/W$, let $\HC_{\overline{\cL}}(H_c, H_{c'})$ be the full subcategory of $\HC(H_c, H_{c'})$ consisting of all HC bimodules $B$ with $\Supp(B) \subseteq \overline{\cL}$. Define the category $\HC_{\partial\cL}(H_c,H_{c'})$ in a similar way, where $\partial\cL := \overline{\cL}\setminus \cL$. This is a Serre subcategory of $\HC_{\overline{\cL}}(H_c, H_{c'})$. Form the quotient category $\HC_{\cL}(H_c, H_{c'}) := \HC_{\overline{\cL}}(H_c, H_{c'})/\HC_{\partial\cL}(H_c, H_{c'})$. Then, the above construction for a point in the symplectic leaf $\cL$ restricts to a functor $\bullet_{\dagger,W'}: \HC_{\overline{\cL}}(H_c, H_{c'}) \rightarrow \HC_0^{\Xi}(\underline{H}_c, \underline{H}_{c'})$, where the latter category consists of $\Xi$-equivariant finite dimensional HC $\underline{H}_{c}\text{-}\underline{H}_{c'}$ bimodules. Moreover, this functor factors through the quotient $\HC_{\cL}(H_c, H_{c'})$, and identifies this category with a full subcategory of $\HC_0^{\Xi}(\underline{H}_c,\underline{H}_{c'})$ closed under taking subquotients, \cite[Theorem 3.4.6]{losev_completions}. If $c = c'$, then this subcategory is also closed under tensor products. We remark that the functor $\bullet_{\dagger,W'}$ can be upgraded to a functor between HC bimodules for the homogenized rational Cherednik algebras, see \cite[Section 3]{losev_completions}.

Using restriction functors and the fact that $(\hh\oplus\hh^{*})/W$ has finitely many symplectic leaves, \cite[Subsection 4.1]{losev_bernstein} proves the following.

\begin{prop}\label{prop:finitelength}
Any HC $H_{c}\text{-}H_{c'}$-bimodule has finite length.
\end{prop}

Let us see some more applications of the restriction functors. The first one of these tells us that a certain quotient of the category of all HC $H_{c}$-bimodules is semisimple. We remark that the Poisson variety $(\hh\oplus \hh^{*})/W$ has a unique dense symplectic leaf $\cL$, which coincides with the projection of the set of points in $\hh\oplus\hh^{*}$ with trivial stabilizer. The category $\HC_{\cL}(H_{c})$ is the quotient of the category of all HC $H_{c}$-bimodules modulo the Serre subcategory formed by HC bimodules with proper support, and we denote $\overline{\HC}(H_{c}) := \HC_{\cL}(H_{c})$. 

\begin{prop}\label{prop:ss1param}
The category $\overline{\HC}(c,c)$ is semisimple.
\end{prop}
\begin{proof}
Pick a point $x \in \hh\oplus\hh^{*}$ whose stabilizer in $W$ is trivial, its projection to $(\hh\oplus\hh^{*})/W$ is a point in the open symplectic leaf $\cL$. Note that $\Xi = W$, and $\underline{H}_{c} = \CC$, so $\HC^{\Xi}_{0}(\underline{H}_{c})$ is precisely the category of finite dimensional representations of $W$. The results from \cite{losev_completions} mentioned above imply that $\overline{\HC}(c,c)$ can be embedded as a full subcategory of the category of representations of $W$. Moreover, this subcategory is closed under subquotients and tensor products. It follows that $\overline{\HC}(c,c)$ is equivalent to the category of representations of $W/N$ for a normal subgroup $N \subseteq W$. In particular, it is a semisimple category. 
\end{proof}

In Section \ref{sect:fullsupport}, we will find an explicit description of the subgroup $N$ that appears in the proof of Proposition \ref{prop:ss1param}, see Subsection \ref{subsect:subgroup}.. We will also see, Corollary \ref{cor:semisimple}, that $\overline{\HC}(H_{c}, H_{c'})$ is semisimple for \emph{different} parameters $c, c'$. Another application of restriction functors is the following result.

\begin{prop}\label{prop:injective}
The regular bimodule $H_{c}$ is injective in the category of HC $H_{c}$-bimodules.
\end{prop}
\begin{proof}
In view of Proposition \ref{prop:finitelength}, we need to show that $\Ext(B, H_{c}) = 0$ for any irreducible HC bimodule $B$, where $\Ext$ denotes $\Ext^{1}_{H_{c}\bimod}$. We separate in two cases.

{\it Case 1: $B$ has proper support}. This case is contained in an old version of the paper \cite{BL}. We provide a proof for the reader's convenience. Consider an exact sequence $0 \rightarrow H_{c} \rightarrow X \rightarrow B \rightarrow 0$. Let $x$ be in the open symplectic leaf of $(\hh\oplus\hh^{*})/W$, and consider the corresponding restriction functor $\bullet_{\dagger} := \bullet_{\dagger, x}$. Note that $B_{\dagger} = 0$. Since the restriction functor is exact, we must then have $((H_{c})_{\dagger})^{\dagger} = (X_{\dagger})^{\dagger}$. We have the adjunction map $X \rightarrow ((H_{c})_{\dagger})^{\dagger}$. The latter bimodule admits a filtration whose associated graded is contained in $((\CC[\hh\oplus\hh^{*}]\#W)_{\dagger})^{\dagger}$, see \cite[Subsection 3.6]{losev_completions} for a construction of the functor $\bullet_{\dagger}$ for the algebra $\CC[\hh\oplus\hh^{*}]\#W$. By construction, $((\CC[\hh\oplus\hh^{*}]\#W)_{\dagger})^{\dagger}$ is the global sections of the restriction of $\CC[\hh\oplus\hh^{*}]\#W$ to the open symplectic leaf of $(\hh\oplus\hh^{*})/W$. But the complement of this leaf has codimension 2. Hence, $ ((\CC[\hh\oplus\hh^{*}]\#W)_{\dagger})^{\dagger} = \CC[\hh\oplus\hh^*]\#W$, and this implies that $((H_{c})_{\dagger})^{\dagger} = H_{c}$. Now the adjunction map $X \rightarrow H_{c}$ is a splitting of the exact sequence $0 \rightarrow H_{c} \rightarrow X \rightarrow B \rightarrow 0$.

{\it Case 2: $B$ has full support}. Assume $0 \rightarrow H_c \buildrel \varphi \over \longrightarrow X  \longrightarrow B \rightarrow 0$ is an exact sequence. Pick $x$ in the open symplectic leaf of $(\hh \oplus \hh^{*})/W$, and consider the corresponding restriction functor $\bullet_{\dagger}$. We have an exact sequence $0 \rightarrow (H_c)_{\dagger} \rightarrow X_\dagger \rightarrow B_{\dagger} \rightarrow 0$. Since the category of $\Xi$-equivariant Harish-Chandra $\underline{H}_c$-bimodules is semisimple, Proposition \ref{prop:ss1param}, this exact sequence splits, $X_{\dagger} = (H_c)_{\dagger} \oplus B_{\dagger}$. Now, recall from the previous case that $(H_{c, \dagger})^{\dagger} = H_c$, and that we have the adjunction morphism $X \rightarrow (X_{\dagger})^{\dagger} = H_c \oplus (B_{\dagger})^{\dagger}$. By \cite[Theorem 3.7.3]{losev_completions}, the kernel of this morphism is a HC bimodule with proper support. We claim that $X$ does not have sub-bimodules with proper support. Since $B$ has full support, our claim will follow if we check that $H_{c}$ has no sub-bimodules (= ideals) with proper support. This is an immediate consequence of the claim that any ideal of the algebra $\CC[\hh\oplus\hh^{*}]\#W$ has full support as a $\CC[\hh\oplus\hh^{*}]^{W}$-module, which is clear. Thus, we can consider $X \subseteq H_{c} \oplus (B_{\dagger})^{\dagger}$, and $\varphi = (\varphi_1, \varphi_2)$, where $\varphi_1: H_{c} \rightarrow H_{c}$, $\varphi_{2}: H_{c} \rightarrow (B_{\dagger})^{\dagger}$. Now, $\End_{H_{c}\bimod}(H_{c})$ coincides with the center of the algebra $H_{c}$ and, since this is trivial, it follows that every nonzero endomorphism of $H_{c}$ is an automorphism. So, if $\varphi_1 \neq 0$, we can find a splitting for $\varphi$. Thus, we may assume $\varphi_1 = 0$, and $\varphi_2: H_{c} \rightarrow (B_{\dagger})^{\dagger}$ is an inclusion. 

Now recall that we have the adjunction morphism $B \rightarrow (B_{\dagger})^{\dagger}$. Since $B$ is irreducible, this is actually an injection. The cokernel of this morphism is a HC bimodule with proper support, this follows from \cite[Proposition 3.7.3]{losev_completions}. Thus, we must have $B \subseteq \varphi_{2}(H_{c})$, so $B$ is isomorphic to an ideal of $H_{c}$. But this implies that $B_{\dagger} = (H_{c})_{\dagger}$.  So the exact sequence $0 \rightarrow H_{c} \rightarrow X \rightarrow B \rightarrow 0$ induces an inclusion $X \subseteq (X_{\dagger})^{\dagger} = H_{c} \oplus H_{c}$ and, reasoning as in the previous paragraph, we can find a splitting for $\varphi$. Thus, $\Ext(H_{c}, B) = 0$.
\end{proof}

We now explain a way to construct $\bullet_{\dagger}$ that is convenient for us. We follow \cite[Section 3.2]{losev_derived}. First, we explain how to construct $\bullet_{\dagger}$ for the homogeneous rational Cherednik algebra. Let $\underline{\cL} \subseteq \hh^{reg-W'}/W'$ be the projection of $\{x \in \hh : W_{x} = W'\} \subseteq \hh$. Here, $\hh^{reg-W'}$ denotes the principal open set $\{x \in \hh : W_{x} \subseteq W'\} = \hh \setminus \bigcup_{s \not \in W'} \Gamma_{s}$. Note that $\underline{\cL}$ is closed in $\hh^{reg-W'}/W'$. We consider the completion $\bH_{reg-W'}^{\wedge\underline{\cL}} := \CC[\hh^{reg-W'}/W']^{\wedge\underline{\cL}} \otimes_{\CC[\hh/W]} \bH$, which is naturally an algebra. For a HC $\bH$-bimodule $\bB$, let $\bB_{reg-W'}^{\wedge	\underline{\cL}} :=  \CC[\hh^{reg-W'}/W']^{\wedge\underline{\cL}} \otimes_{\CC[\hh/W]} \bB$. This is a $\Xi$-equivariant $\bH_{reg-W'}^{\wedge\underline{\cL}}$-bimodule where, recall, we denote $\Xi = N_W(W')/W'$. The latter algebra is isomorphic to the algebra of $|W/W'|\times|W/W'|$-matrices over $\bH(W', \hh)^{\wedge\underline{\cL}}_{reg-W'}$, so we have a Morita equivalence between $\bH_{reg-W'}^{\wedge \underline{\cL}}$ and $\bH(W',\hh)^{\wedge\underline{\cL}}$. Abusing the notation, let us denote also by $\bB_{reg-W'}^{\wedge\underline{\cL}}$ the corresponding $\bH(W', \hh)^{\wedge\underline{\cL}}_{reg-W'}$-bimodule. Now let $\bB_{\diamondsuit}$ be the subspace of $\bB_{reg-W'}^{\wedge\underline{\cL}}$ consisting of elements that commute with $\hh^{W'}, (\hh^*)^{W'}$ and for which the action of $S(\hh_{W'})^{W'}, S(\hh^{*}_{W'})^{W'}$ is locally nilpotent. Then, according to \cite[Lemma 3.7]{losev_derived}, we have an isomorphism of functors $\bullet_{\dagger} \cong \bullet_{\diamondsuit}$. A more precise statement is as follows. Consider the functor $\cG: \HC^{\Xi}(\bH(W', \hh_{W'})) \longrightarrow \HC^{\Xi}(\bH(W', \hh)^{\wedge\underline{\cL}}_{reg-W'})$ given by $\cG(\bB') = \CC[\underline{\cL} \times \hh_{W'}/W']^{\wedge\underline{\cL}}\otimes_{\CC[\underline{\cL} \times \hh_{W'}/W']} (\cD_{\hbar}(\underline{\cL}) \otimes \bB')$. Then, $\cG$ is a fully faithful embedding and $\bullet_{\dagger}$ coincides with $\cG^{-1}(\bullet_{reg-W'}^{\wedge\underline{\cL}})$ (the statement here includes that this latter functor is well-defined).

To construct $\bullet_{\dagger}$ for rational Cherednik algebras $H_{c}$, $H_{c'}$ we use the Rees construction. Namely, let $B$ be a HC $H_{c}\text{-}H_{c'}$-bimodule, and pick a bimodule filtration on $B$ in a way that $\R_{\hbar}(B)$ is a HC $\bH$-bimodule. Then, $B_{\dagger} := (R_{\hbar}(B)_{\dagger})/(\hbar - 1)$ is a HC $\underline{H}_{c}\text{-}\underline{H}_{c'}$-bimodule. The construction of $B_{\dagger}$ does not depend, up to a distinguished isomorphism, on the choice of a filtration on $B$, see e.g. \cite[Subsection 3.9]{losev_completions}.

Finally, let us state a compatibility result between restriction functors for category $\cO$ and restriction functors for HC bimodules. Recall that for a HC $H_{c}\text{-}H_{c'}$-bimodule $B$ and $N \in \cO_{c'}$, we have that $B \otimes_{H_{c'}} N \in \cO_{c}$. Then, \cite[Subsection 3.3]{losev_derived}, we have

\begin{lemma}\label{lemma:compatibility of res}
There is a natural isomorphism $\Res^{W}_{W'}(B \otimes_{H_{c'}} N) \buildrel \cong \over \longrightarrow B_{\dagger, W'} \otimes_{\underline{H}_{c'}} \Res^{W}_{W'}(N)$.
\end{lemma}

\section{Reduction to corank 1}\label{sect_reduction}

Let $\underline{W}$ be a parabolic subgroup of $W$. Associated to it, we have the symplectic leaf $\cL := \pi(\{(x, y) \in \hh \oplus \hh^{*} : W_{(x,y)} = \underline{W}\}) \subseteq (\hh \oplus \hh^{*})/W$, where $\pi: \hh \oplus \hh^{*} \rightarrow (\hh \oplus \hh^{*})/W$ is the quotient by the $W$-action, see \cite[7.4]{BrGo}. Let us denote by $\underline{H}_{c}$ the rational Cherednik algebra for the group $\underline{W}$ acting on $\hh_{\underline{W}}$ where, recall, $\hh = \hh^{\underline{W}} \oplus \hh_{\underline{W}}$ is a unique $\underline{W}$-invariant decomposition. We have the restriction functor

\[
\bullet_{\dagger}: \HC_{\cL}(H_{c}, H_{c'}) \rightarrow \HC_{0}^{\Xi}(\underline{H}_{c}, \underline{H}_{c'})
\]

\noindent where, as before, $\Xi = N_{W}(\underline{W})/\underline{W}$. A natural question here is to describe the image of this functor or, equivalently, of the functor $\bullet_{\dagger}: \HC_{\overline{\cL}}(H_{c}, H_{c'}) \rightarrow \HC_{0}^{\Xi}(\underline{H}_{c}, \underline{H}_{c'})$. Here, we reduce the study of this question to the following situation.

\begin{definition}
	Let $\underline{W} \subseteq W' \subseteq W$ be parabolic subgroups. In particular, $\hh^{W'} \subseteq \hh^{\underline{W}}$. We say that $\underline{W}$ sits inside $W'$ \emph{in corank $1$} if $\codim_{\hh^{\underline{W}}}(\hh^{W'}) = 1$. 
\end{definition} 

More precisely, we have the following result.

\begin{theorem}\label{thm:veryverytechnical}
	Let $B \in \HC^{\Xi}_{0}(\underline{H}_{c}, \underline{H}_{c'})$. Assume that, for every parabolic subgroup $W' \subseteq W$ containing $\underline{W}$ in corank 1, there exists a HC $H'_{c}\text{-}H'_{c'}$-bimodule $B'$ such that $(B')_{\dagger^{W'}_{\underline{W}}} = B$. Here, $H'_{c} := H_{c}(W', \hh_{W'})$ and the $N_{W'}(\underline{W})/\underline{W}$-equivariant structure on $B$ is restricted from the $\Xi$-equivariant structure. Then, there exists a HC $H_{c}\text{-}H_{c'}$-bimodule $\overline{B}$ such that $\overline{B}_{\dagger^{W}_{\underline{W}}} = B$.
\end{theorem}

The proof of Theorem \ref{thm:veryverytechnical} passes through its homogeneous version, which is not surprising given the construction of the funtor $\bullet_{\dagger}$. In Subsection \ref{subsect:localization} we give basic facts concerning the localization of HC bimodules to open sets in $\hh/W$ and, more generally, on \'etale lifts of HC bimdodules. Subsection \ref{subsect:BE} recalls the isomorphisms of completions of \cite{BE} and, more generally, the isomorphisms of \'etale lifts that appeared in \cite{losev_derived}. The next two subsections, Subsection \ref{subsect:supports} and Subsection \ref{subsect:annihilators} are technical, in them we study supports and annihilators of HC bimodules over the localized Cherednik algebras. In Subsection \ref{subsect:main homogeneous} we state and prove an homogeneous variant of Theorem \ref{thm:veryverytechnical}. Finally, in Subsection \ref{subsect:specpar} we use the Rees construction to prove Theorem \ref{thm:veryverytechnical}.

\subsection{Localization}\label{subsect:localization} Let $f \in \CC[\hh]^{W}$. Since the adjoint action of $f$ on $\bH$ is locally nilpotent, the localization $\bH[f^{-1}]$ makes sense as an algebra. We remark that $\bH[f^{-1}]/\fc\bH[f^{-1}] = \CC[\pi^{-1}(U) \times \hh^{*}]\#W$. Here, $U$ denotes the principal open set in $\hh/W$ determined by $f$, and $\pi: \hh \longrightarrow \hh/W$ is the natural projection. Note that the algebra $\bH[f^{-1}]$ is graded, this follows because $f \in \bH$ is in degree $0$. Also, note that $\bH[f^{-1}] = \CC[U] \otimes_{\CC[\hh/W]}\bH$. 

\begin{lemma}\label{lemma_loc}
Let $\bB$ be a HC $\bH$-bimodule, and $f \in \CC[\hh]^{W}$. Then, all localizations $\CC[U] \otimes_{\CC[\hh/W]} \bB$, $\bB \otimes_{\CC[\hh/W]} \CC[U]$, $\CC[U] \otimes_{\CC[\hh/W]} \bB \otimes_{\CC[\hh/W]} \CC[U]$ coincide.
\end{lemma} 
\begin{proof}
Recall that $\bB$ is graded and, since it is finitely generated, the grading is bounded below. Now, since the adjoint action of $f$ on $\bB/\hbar\bB$ is $0$, we have that $fv - vf \in \hbar\bB$ for every $v \in \bB$. So we can define the operator $\frac{1}{\hbar}[f, \cdot]$ because $\bB$ is $\CC[\hbar]$-flat. Since $f$ has degree $0$, this operator has degree $-1$. So the operator $\frac{1}{\hbar}[f, \cdot]$, and hence $[f, \cdot]$, is locally nilpotent. The result follows. 
\end{proof}

For a HC $\bH$-bimodule, $\bB$, we will denote by $\bB[f^{-1}]$ any of the localizations of Lemma \ref{lemma_loc}. Note that we can define the notion of a HC bimodule over $\bH[f^{-1}]$ similarly to Definition \ref{def_HCbimod}. Clearly, $\bB[f^{-1}]$ is a HC $\bH[f^{-1}]$-bimodule. 

We remark that, more generally, for a smooth affine algebraic variety $U$ with an \'etale map $U \longrightarrow \hh/W$, the space $\CC[U] \otimes_{\CC[\hh/W]} \bH$ is actually an algebra. Indeed, it can be identified with the $S(\fc)$-subalgebra of $\cD_{\hbar}(U \times_{\hh/W} \hh^{reg})[\bc_1, \dots, \bc_n]\#W$ generated by $\CC[U \times_{\hh/W}\hh]$, $\CC W$, and the Dunkl-Opdam operators. We will denote this algebra by $\bH_{U}$. Note that $\bH_{U}$ is graded, with $\CC[U \times_{\hh/W}\hh]$ in degree $0$. We can define the notion of a HC $\bH_{U}$-bimodule as before. Note that we still have a notion of the support of a HC $\bH_U$-bimodule $\bB$: this is the support of $\bB/\fc\bB$ as a $Z(\bH_{U}/\fc\bH_{U})$-module. If, moreover, $U \longrightarrow \hh/W$ is an inclusion then, similarly to Lemma \ref{lemma_loc}, for a HC $\bH$-bimodule $\bB$, all localizations $\CC[U] \otimes_{\CC[\hh/W]} \bB$, $\bB \otimes_{\CC[\hh/W]} \CC[U]$ and $\CC[U] \otimes_{\CC[\hh/W]} \bB \otimes_{\CC[\hh/W]} \CC[U]$ coincide. We will denote this by $\bB_U$. This is a HC $\bH_U$-bimodule.

\subsection{Bezrukavnikov-Etingof isomorphisms}\label{subsect:BE} We will need some isomorphisms of \'etale lifts, that are essentially due to Bezrukavnikov and Etingof, \cite[Theorem 3.2]{BE}, see also \cite[Lemma 2.1]{losev_hecke}, \cite[Proposition 2.6]{wilcox}.  

Let $W' \subseteq W$ be a parabolic subgroup. Recall that we have defined $\hh^{reg-W'} := \{ b \in \hh : W_{b} \subseteq W'\} = \hh \setminus \bigcup_{s \not\in W'}\ker\alpha_{s}$. In particular, $\hh^{reg-W'}$ is a principal open set in $\hh$. We remark that the unramified locus of the map $\hh/W' \rightarrow \hh/W$ coincides with $\hh^{reg-W'}/W'$, so we have an \'etale morphism $\hh^{reg-W'}/W' \longrightarrow \hh/W$ and the algebra $\bH_{reg-W'} := \bH_{\hh^{reg-W'}/W'}$ makes sense. 

On the other hand, let $\underline{\bH}$ be the rational Cherednik algebra for the action of $W'$ on $\hh$. We remark that we take this as an algebra over $S(\fc)$, even if the defining relations do not involve all the variables $\bc_1, \dots, \bc_r$. We have a decomposition $\hh = \hh^{W'} \oplus \hh_{W'}$. Recall that here, $\hh^{W'}$ denotes the subspace of $W'$-invariants, and $\hh_{W'}$ a unique $W'$-stable complement to $\hh^{W'}$ in $\hh$. This induces a decomposition $\underline{\bH} = \underline{\bH}^{+} \otimes_{\CC[\hbar]}\cD_{\hbar}(\hh^{W})$, where $\underline{\bH}^{+}$ is the rational Cherednik algebra for the action of $W'$ on $\hh_{W'}$ (again, we include all variables $\bc_1, \dots, \bc_r)$. 

We can form the centralizer algebra $Z(W, W', \underline{\bH}_{reg-W'})$. Recall that, by definition, for an algebra $A$ and a monomorphism $\CC W' \hookrightarrow A$, we can form the right $A$-module $\Fun_{W'}(W, A) := \{f: W \rightarrow A : f(w'w) = w'f(w),\; \text{for all} \; w' \in W'\}$. This is a free right $A$-module of rank $|W/W'|$. Define $Z(W, W', A) := \End_{A}(\Fun_{W'}(W,A))$. We remark that $Z(W,W',A) \cong \Mat_{|W/W'|}(A)$, but this isomorphism is not canonical. There is, however, a canonical way to recover $A$ from $Z(W,W',A)$. Namely, consider the element $e(W') \in Z(W,W',A)$ defined by

$$
[e(W')f](w) = \begin{cases} f(w) & \text{if} \; w \in W' \\ 0 & \text{else}. \end{cases}
$$

\noindent Then, $e(W')Z(W,W',A)e(W')$ is naturally identified with $A$.

\begin{lemma}[\cite{BE}, Theorem 3.2]\label{lemma_be}
There is an isomorphism

$$
\Theta: \bH_{reg-W'} \longrightarrow Z(W, W', \underline{\bH}_{reg-W'}).
$$
\end{lemma}

\noindent We remark that \cite[Theorem 3.2]{BE} works with completions rather than \'etale lifts. However, $\hh^{reg-W'}$ is defined as the complement of the reflection hyperplanes for reflections not in $W'$, so the proof of \cite[Theorem 3.2]{BE} works in the setting of Lemma \ref{lemma_be}. Alternatively, the existence of the isomorphism $\Theta$ can be seen from the description of the variety $\hh^{reg-W'}/W' \times_{\hh/W} \hh$ given in Subsection \ref{subsect:supports}. Moreover, the isomorphism in Lemma \ref{lemma_be} can be further restricted as follows. Recall that we have set $\underline{\cL} := \{x \in \hh : W_{x} = W'\}$, which is a closed subvariety inside of $\hh^{reg-W'}/W'$. We can form the completion along the closed subvariety $\underline{\cL} \subseteq \hh^{reg-W'}/W'$, and we define the algebras $\bH_{reg-W'}^{\wedge \underline{\cL}} := \CC[\hh^{reg-W'}/W']^{\wedge \underline{\cL}} \otimes_{\CC[\hh/W]} \bH$, $\underline{\bH}_{reg-W'}^{\wedge \underline{\cL}} := \CC[\hh^{reg-W'}/W']^{\wedge \underline{\cL}} \otimes_{\CC[\hh/W]} \underline{\bH}$. The isomorphism $\Theta$ in Lemma \ref{lemma_be} can be restricted to an isomorphism

$$
\Theta: \bH_{reg-W'}^{\wedge \underline{\cL}} \buildrel \cong \over \longrightarrow Z(W, W', \underline{\bH}_{reg-W'}^{\wedge \underline{\cL}}).
$$

Now let $\widehat{\underline{\cL}}$ denote the formal neighborhood of $\underline{\cL}$ in $\hh^{reg-W'}/W'$, and $\widehat{\underline{\cL}}^{\times} := \widehat{\underline{\cL}} \setminus \underline{\cL}$. The algebra of functions on $\widehat{\underline{\cL}}^{\times}$ is a localization of $\CC[\hh^{reg-W'}/W']^{\wedge\underline{\cL}}$, and we can form the algebras $\bH_{\widehat{\underline{\cL}}^{\times}}$, $\underline{\bH}_{\widehat{\underline{\cL}}^{\times}}$ as localizations of the algebras $\bH_{reg-W'}^{\wedge\underline{\cL}}, \underline{\bH}_{reg-W'}^{\wedge\underline{\cL}}$, respectively. The isomorphism $\Theta$ can be further restricted to

$$
\Theta_{W'}: \bH_{\widehat{\underline{\cL}}^{\times}} \buildrel \cong \over \longrightarrow Z(W, W', \underline{\bH}_{\widehat{\underline{\cL}}^{\times}}).
$$
 
\subsection{Supports and symplectic leaves}\label{subsect:supports} Let us remark that there is a $W$-equivariant isomorphism 

\begin{equation}\label{eqn:iso_varieties} \hh^{reg-W'}/W' \times_{\hh/W} \hh \buildrel \cong \over \longrightarrow \bigsqcup_{w \in W/W'} w\hh^{reg-W'} \subseteq W/W' \times \hh
\end{equation}

\noindent that, for $x \in \hh^{reg-W'}$ and $w \in W$, sends $(W'x, wx)$ to $(wW', wx) \in W/W' \times \hh$. Let us denote by $\cX$ the variety $\bigsqcup_{w \in W/W'} w\hh^{reg-W'}$. So we can think of $\bH_{reg-W'}$ as $\bH(W, \cX)$, the rational Cherednik algebra associated to the action of $W$ on the variety $\cX$, see e.g. \cite[Section 2]{wilcox} for generalities on rational Cherednik algebras associated to the action of a complex reflection group on a smooth algebraic variety (not necessarily a vector space), in this paper we will only work with varieties that are disjoint unions of Zariski open sets inside a vector space. Note that $\cX \times \hh^{*} = T^{*}\cX$ is a symplectic algebraic variety, and the action of $W$ on $T^{*}\cX$ is by symplectomorphisms. So $(T^{*}\cX)/W$ is a Poisson variety. Moreover, it follows from the isomorphism (\ref{eqn:iso_varieties}) that $\cX/W = \hh^{reg-W'}/W'$ and $(T^{*}\cX)/W = (T^{*}\hh^{reg-W'})/W'$. As before, for a HC $\bH_{reg-W'}$-bimodule $\bB$, its support $\Supp(\bB) \subseteq (T^{*}\hh^{reg-W'})/W'$ is a union of symplectic leaves. 

We can describe the symplectic leaves inside $(T^{*}\hh^{reg-W'})/W'$ using the results in \cite[7.4]{BrGo}. We remark that \cite{BrGo} works with actions on a vector space, but the proofs work in our setting. Namely, let $W'' \subseteq W'$ be a parabolic subgroup. Let $\cL_{W''}^{W'} := \pi_{\hh^{reg-W'}}(\{x \in T^{*}\hh^{reg-W'} : W'_{x} = W''\})$, where $\pi_{\hh^{reg-W'}}: T^{*}\hh^{reg-W'} \rightarrow (T^{*}\hh^{reg-W'})/W'$ is the quotient by the $W'$-action. Note that $\cL^{W'}_{W''}$ depends only on the conjugacy class of $W''$ in $W'$. The symplectic leaves in $(T^{*}\hh^{reg-W'})/W'$ are precisely the $\cL^{W'}_{W''}$, where $W'' \subseteq W'$ is a parabolic subgroup. Note that $\overline{\cL_{W''}^{W'}} = \bigsqcup_{W'' \subseteq W''' \subseteq W'} \cL_{W'''}^{W'}$. It follows that the unique closed leaf inside $(T^{*}\hh^{reg-W'})/W'$ is $\cL^{W'}_{W'}$. We remark that $\cL^{W'}_{W'} = \hh^{W'}_{reg} \times (\hh^{*})^{W'}$, where $\hh^{W'}_{reg} := \{x \in \hh : W_{x} = W'\}$.  

Similarly, we can think of $\underline{\bH}_{reg-W'}$ as being the rational Cherednik algebra $\bH(W', \hh^{reg-W'})$ (recall, we are taking all variables $\hbar, \bc_1, \dots, \bc_r$ even if the defining relations do not involve all of them). The support of a HC $\underline{\bH}_{reg-W'}$-bimodule $\bB$ is again a union of symplectic leaves of $(T^{*}\hh^{reg-W'})/W'$. 
Let us describe a relation between supports of HC $\bH_{reg-W'}$ and $\underline{\bH}_{reg-W'}$-bimodules. Note that $\CC[\cX] = \bigoplus_{w \in W/W'} \CC[w\hh^{reg-W'}]$. We may think of the idempotent $e(W') \in Z(W, W', \underline{\bH}_{reg-W'}) \cong \bH_{reg-W'}$ introduced in Subsection \ref{subsect:BE} as being the primitive idempotent in $\CC[\cX] \subseteq \bH_{reg-W'}$ corresponding to the direct summand $\CC[\hh^{reg-W'}]$. It follows that for a HC $\bH_{reg-W'}$-bimodule $\bB$, $\Supp(\bB) = \Supp(e(W')\bB e(W'))$. \\
The description of the singular support of a HC bimodule has the following consequence for the support of the elements of $\bB/\fc\bB$. 

\begin{lemma}\label{lemma:support_elements}
Let $\bB$ be a HC $\bH_{reg-W'}$-bimodule. Consider $\bB/\fc\bB$ as a $\CC[\cX/W] \otimes \CC[\hh^{*}/W']$-module where, recall, $\cX = \bigsqcup_{w \in W/W'}w\hh^{reg-W'}$, and $\cX/W = \hh^{reg-W'}/W'$.  Then, for every nonzero $v \in \bB/\fc\bB$, its support $X_{v} \subseteq \hh^{reg-W'}/W' \times \hh^{*}/W'$ contains $\hh^{W'}_{reg} \times (\hh^{*})^{W'}$.
\end{lemma}

In view of the description of the symplectic leaves inside $(\cX \times \hh^{*})/W$, Lemma \ref{lemma:support_elements} is a consequence of the following result.

\begin{lemma}
Let $A$ be a commutative, Noetherian Poisson algebra, and let $M$ be a finitely generated Poisson $A$-module. Then, for every element $m \in M$, its set-theoretic support $X_{m} \subseteq \Spec(A)$ is a Poisson subvariety.
\end{lemma}
\begin{proof}
First of all, let $I \subseteq A$ be any ideal. For $k \geq 0$, let $M_{I^{k}} := \{ n \in M : I^{k}n = 0\}$. Note that $M_{I^{k}} \subseteq M_{I^{k+1}}$, so that $M(I) := \bigcup_{k \geq 0} M_{I^{k}}$ is a submodule of $M$. We claim that it is a Poisson submodule. Take $m \in M_{I^{k}}$ and $a \in A$. Let $a_{1}, \dots, a_{2k} \in I$, so that $a_{1}\cdots a_{2k}m = 0$. It follows that $0 = \{a, a_{1}\dots a_{2k}m\} = a_{1}\cdots a_{2k}\{a,m\} + \{a, a_{1}\cdots a_{2k}\}m$. Thanks to the Leibniz identity again, $\{a, a_{1}\cdots a_{2k}\} = a_{1}\cdots a_{k}\{a, a_{k+1}\cdots a_{2k}\} + a_{k+1}\cdots a_{2k}\{a, a_{1}\cdots a_{k}\} \in I^{k}$. Since $m \in M_{I^{k}}$, this implies that $a_{1}\cdots a_{2k}\{a, m\} = 0$. Thus, $\{A, m\}\subseteq M_{I^{2k}}$. So $M(I)$ is a Poisson submodule and thus $\supp(M(I)) \subseteq \Spec(A)$ is a Poisson subvariety. \\
Now specialize to the case where $I = \ann_{A}(m)$. Since $A$ is Noetherian and $M$ is finitely generated, $M(I) = M_{I^{k}}$ for some $k > 0$. Then, $I^{k} \subseteq \ann(M(I))$. On the other hand, since $m \in M(I)$ and $I = \ann_{A}(m)$, $\ann(M(I)) \subseteq I$. So $\sqrt{\ann(M(I))} = \sqrt{I}$ and the result follows.
\end{proof}

Let us remark that, thanks to the correspondence between supports of HC $\bH_{reg-W'}$- and $\underline{\bH}_{reg-W'}$-bimodules, we get from Lemma \ref{lemma:support_elements} the following result.

\begin{cor}\label{cor:support_important}
Let $\bB$ be a HC $\underline{\bH}_{reg-W'}$-bimodule. Consider $\bB/\fc\bB$ as a $\CC[\hh^{reg-W'}/W'] \otimes \CC[\hh^{*}/W']$-module. Then, for every nonzero $v \in \bB/\fc\bB$, its support $X_{v} \subseteq \hh^{reg-W'}/W' \times \hh^{*}/W$ contains $\hh^{W'}_{reg} \times (\hh^{*})^{W'}$.
\end{cor}

\subsection{Annihilators and liftings}\label{subsect:annihilators} We will describe the annihilator of a HC $\bH_{reg-W'}$-bimodule \emph{as a left $\CC[\hh^{reg-W'}/W']$-module}. In order to do so, we need the following finiteness result.

\begin{lemma}\label{lemma_fg}
Let $\bB$ be a HC $\bH_{reg-W'}$-bimodule. Then, $\bB$ is finitely generated over $\CC[\hh^{reg-W'}/W'][\fc] \otimes_{S(\fc)} \CC[\hh^{*}/W][\fc]^{\opp}$. Similarly, $\bB$ is finitely generated over $\CC[\hh^{*}/W][\fc] \otimes_{S(\fc)} \CC[\hh^{reg-W'}/W'][\fc]^{\opp}$, where the superscript $\opp$ means that the corresponding algebra acts on the right.
\end{lemma}
\begin{proof}
Since $\bB$ is HC, we have that $\bB/\fc\bB$ is a module over $\CC[\cX \times \hh^{*}]^{W}$, which is the center of the algebra $\bH_{reg-W'}/\fc\bH_{reg-W'}$. This latter algebra is finite over its center, so $\bB/\fc\bB$ is a finitely generated module over $\CC[\cX \times \hh^{*}]^{W}$. Now, the natural map $(\cX \times \hh^{*})/W \longrightarrow \cX/W \times \hh^{*}/W$ is finite, so $\bB/\fc\bB$ is a finitely generated module over $\CC[\cX]^{W} \otimes \CC[\hh^{*}]^{W}$. Let $\underline{v}_1, \dots, \underline{v}_m$ be generators of $\bB/\fc\bB$ under the action of $\CC[\cX]^{W} \otimes \CC[\hh^{*}]^{W}$. We can assume that these elements are homogeneous with respect to the grading on $\bB/\fc\bB$ inherited from the one on $\bB$. Let $v_1, \dots, v_m$ be homogeneous lifts of $\underline{v}_1, \dots, \underline{v}_m$. It is now standard to see that $v_1, \dots, v_m$ are generators of $\bB$ under the action of $\CC[\cX]^{W}[\fc] \otimes_{S(\fc)} \CC[\hh^{*}]^{W}[\fc]^{\opp}$.
\end{proof}

\begin{lemma}\label{lemma:ann}
Let $\bB$ be a HC $\bH_{reg-W'}$-bimodule. Assume that $\Supp(\bB) = \cL^{W'}_{W'}$. Then, as a (left or right) $\CC[\hh^{reg-W'}/W']$-module, $\bB$ is annihilated by a power of the ideal $I$ of functions vanishing on $\underline{\cL} \subseteq \hh^{reg-W'}/W'$ where, recall, $\underline{\cL} = \{x \in \hh : W_{x} = W'\}$. 
\end{lemma}
\begin{proof}
First, we show that any element in $\bB$ is annihilated by a large enough power of $I$. Recall that $\cL^{W'}_{W'} = \hh^{W'}_{reg} \times (\hh^{*})^{W'}$. In particular, $\hh^{W'}_{reg} \times 0 \subseteq \cL^{W'}_{W'}$. It follows by our assumption on $\Supp(\bB)$ that $I^{n} \subseteq \CC[\hh^{reg-W'}/W'] \subseteq \CC[\hh^{reg-W'}/W'] \otimes \CC[\hh^{*}/W] \subseteq \CC[\cX \times \hh^{*}]^{W}$ annihilates $\bB/\fc\bB$ for $n \gg 0$. So for any $i \in \ZZ$, $I^{n}\bB^{i} \subseteq \fc\bB^{i-1}$. Now the claim follows because the grading on $\bB$ is bounded below. \\
Now let $v_1, \dots, v_m$ be generators of $\bB$ as a $\CC[\hh^{reg-W'}/W'][\fc] \otimes_{S(\fc)} \CC[\hh^{*}/W][\fc]^{\opp}$-module, and let $N \gg 0$ be such that $I^{N}v_{i} = 0$ for all $i = 1, \dots, m$. It is easy to see that $I^{N}\bB = 0$. We are done.
\end{proof}

Let us discuss some consequences of Lemma \ref{lemma:ann}. Recall that we have a natural action of the group $N_{W'}(W)$ on the algebra $\bH_{reg-W'}$ by algebra automorphisms, in such a way that the action of $W' \subseteq N_{W'}(W)$ concides with the adjoint action. Recall also that we denote $\Xi = N_{W}(W')/W'$. The map $\eta_{W'}: \hh^{reg-W'}/W' \rightarrow \hh/W$ is \'etale and it restricts to a covering $\eta_{W'}: \underline{\cL} \rightarrow \eta_{W'}(\underline{\cL}) = \underline{\cL}/\Xi$ with Galois group $\Xi$. This implies that the formal neighborhood $(\hh/W)^{\wedge \eta_{W'}(\underline{\cL})}$ may be identified with the quotient by the action of $\Xi$ on the formal neighborhood $(\hh^{reg-W'}/W')^{\wedge\underline{\cL}}$. Now let $\bB$ be a $\Xi$-equivariant HC $\bH_{reg-W'}$-bimodule supported on $\cL^{W'}_{W'}$. Thanks to Lemma \ref{lemma:ann}, $\bB$ may be thought of as a quasi-coherent sheaf on an infinitesimal neighborhood of $\underline{\cL} \subseteq \hh^{reg-W'}/W'$. Thus, the space of invariants $\bB^{\Xi}$ is a quasi-coherent sheaf on an infinitesimal neighborhood of $\eta_{W'}(\underline{\cL}) \subseteq \hh/W$, and we may think of it as a quasi-coherent sheaf on $W\hh^{reg-W'}/W$. 

We claim that, moreover, $\bB = \CC[\hh^{reg-W'}/W'] \otimes_{\CC[\hh/W]} \bB^{\Xi}$. Since $\eta_{W'}$ is, in particular, \'etale when restricted to $\underline{\cL}$, the induced map $(\hh^{reg-W'}/W')^{\wedge\underline{\cL}} \rightarrow (W\hh^{reg-W'}/W)^{\wedge \eta_{W'}(\underline{\cL})}$ is the quotient by the free $\Xi$-action on the formal neighborhood of $\underline{\cL}$, and $\CC[W\hh^{reg-W'}/W]^{\wedge \eta_{W'}(\underline{\cL})}$ may be identified with the algebra of $\Xi$ invariants in $\CC[\hh^{reg-W'}/W']^{\wedge \underline{\cL}}$. So the desired equality will follow if we show that the right-hand side is equal to $\CC[\hh^{reg-W'}/W']^{\wedge \underline{\cL}} \otimes_{\CC[W\hh^{reg-W'}/W]^{\wedge \eta_{W'}(\underline{\cL})}} \bB^{\Xi}$. But this is clear by our description of the annihilator of $\bB$ (and of $\bB^{\Xi})$. 

\subsection{Main result}\label{subsect:main homogeneous} We are now ready to state our main result. Let $\underline{W}$ be a parabolic subgroup of $W$, and let $\bB$ be a $\Xi = N_{W}(\underline{W})/\underline{W}$-equivariant HC $\bH_{reg-\underline{W}}$-bimodule. We require that $\Supp(\bB) = \cL^{\underline{W}}_{\underline{W}}$, the minimal symplectic leaf in $(T^{*}\hh^{reg-\underline{W}})/\underline{W}$. Recall that we denote $\eta_{\underline{W}}: \hh^{reg-\underline{W}}/\underline{W} \rightarrow \hh/W$. From the previous subsection it follows that $\eta_{\underline{W}*}(\bB^{\Xi})$ is an $\bH$-bimodule satisfying $(\eta_{\underline{W}*}(\bB^{\Xi}))_{reg-\underline{W}} = \bB$. However, $\eta_{\underline{W}*}(\bB^{\Xi})$ need not be finitely generated over $\bH$ so it is not, in general, a HC $\bH$-bimodule. Similarly, if $W'$ is a parabolic subgroup of $W$ containing $\underline{W}$, $(\eta_{\underline{W}*}(\bB^{\Xi}))_{reg-W'}$ does not need to be a HC $\bH_{reg-W'}$-bimodule. However, we can further localize to the punctured formal neighborhood $\underline{\widehat{\cL}}_{W'}^{\times}$. The bimodule $(\eta_{\underline{W}*}(\bB^{\Xi}))_{\underline{\widehat{\cL}}_{W'}^{\times}}$ is now a HC $\bH_{\underline{\widehat{\cL}}_{W'}^{\times}}$-bimodule. 

\begin{theorem}\label{thm_main}
Let $\bB$ be a $\Xi$-equivariant HC $\bH_{reg-\underline{W}}$-bimodule. Assume that $\Supp(\bB) = \cL_{\underline{W}}^{\underline{W}}$ and that for all parabolic subgroups $W'$ with $\underline{W} \subseteq W'$ in corank 1, there is a HC $\bH_{\underline{\widehat{\cL}}_{W'}}$-bimodule $\bB_{W'}$ whose localization to $\widehat{\underline{\cL}}^{\times}_{W'}$ concides with $(\eta_{\underline{W}*}(\bB^{\Xi}))_{\widehat{\underline{\cL}}_{W'}^{\times}}$. Then, there exists a HC $\bH$-bimodule $\overline{\bB}$ such that $\bB = \overline{\bB}_{reg-\underline{W}}$. 
\end{theorem}

The proof of Theorem \ref{thm_main} is inspired by \cite[Section 3]{losev_hecke}, where a similar result is shown at the level of category $\cO$ (for the stratum corresponding to the dense symplectic leaf.) The strategy is as follows. We will define a bimodule that is coherent over an open subset in $\hh/W$ whose complement has codimension 2. Then, we can take global sections. This open subset will be the image in $\hh/W$ of

$$\hh^{sr-\underline{W}}:= \bigcup_{\substack{\underline{W} \subseteq W' \\ \text{in corank} \: 1 }} \hh^{reg-W'}.$$

 It is clear that the complement of $\hh^{sr-\underline{W}}$ in $\hh$ has codimension at least
  2. Moreover, $\hh^{\underline{W}}\cap\hh^{sr-\underline{W}}$ is an open subset of $\hh^{\underline{W}}$ whose complement has codimension at least 2. Indeed, we have
 
\begin{equation}\label{eqn:codim2}
\hh^{\underline{W}}\cap\hh^{sr-\underline{W}} = \hh^{\underline{W}} \setminus \bigcup_{\substack{s, s' \not \in \underline{W} \\ \Gamma_{s}\cap\hh^{\underline{W}} \neq \Gamma_{s'}\cap\hh^{\underline{W}}}} \Gamma_{s}\cap\Gamma_{s'}.
\end{equation}

The way to get a desired bimodule is as follows. First, for each parabolic subgroup $W'$ containing $\underline{W}$ in corank 1, we will construct a HC $\bH_{reg-W'}$-bimodule with the property that its lift to $\hh^{reg-\underline{W}}/\underline{W}$ coincides with $\bB$. Then we will get our bimodule by, roughly speaking, glueing the bimodules defined over $\hh^{reg-W'}/W'$. \\

\begin{proofmain}
{\it Part 1: Constructing HC $\bH_{reg-W'}$-bimodules}. For each parabolic subgroup $W'$ containing $\underline{W}$ in corank 1, let $\bB_{W'}$ be a HC $\bH_{\widehat{\underline{\cL}}_{W'}}$-bimodule that localizes to $(\eta_{\underline{W}*}(\bB^{\Xi}))_{\widehat{\underline{\cL}}^{\times}_{W'}}$, the existence of such a bimodule is guaranteed by the assumptions of the theorem. Note that we may assume that $\bB_{W'} \subseteq (\eta_{\underline{W}*}(\bB^{\Xi}))_{\widehat{\underline{\cL}}_{W'}^{\times}}$, if this is not the case we can just replace $\bB_{W'}$ by its quotient by the maximal sub-bimodule that is killed by the localization, we can find such a sub-bimodule because $\bB_{W'}$ is a finitely generated bimodule over the noetherian algebra $\bH_{\widehat{\underline{\cL}}_{W'}}$.
 
 On the other hand, let $\eta_{W'}: \hh^{reg-W'}/W' \rightarrow \hh/W$ be the natural projection, and consider $\eta_{W'}^{*}(\eta_{\underline{W}*}(\bB^{\Xi})) = \CC[\hh^{reg-W'}/W'] \otimes_{\CC[\hh/W]} \bB^{\Xi}$. The inclusion $\CC[\hh^{reg-W'}/W'] \hookrightarrow \CC[\widehat{\underline{\cL}}_{W'}^{\times}]$ induces a map $\eta_{W'}^{*}\eta_{\underline{W}*}(\bB^{\Xi}) \rightarrow \eta_{\underline{W}*}(\bB^{\Xi})_{\widehat{\underline{\cL}}^{\times}_{W'}}$ that we claim to be injective. Indeed, this follows because inside $(\hh\oplus\hh^{*})/W$ we have $\cL_{W'} \subseteq \overline{\cL_{\underline{W}}}$ and the singular support of every finitely generated $\bH$-sub-bimodule of $\eta_{\underline{W}*}(\bB^{\Xi})$ (which is the union of its HC sub-bimodules) contains $\cL_{\underline{W}}$. The claim is now a consequence of the fact that $\cL^{\underline{W}}_{\underline{W}}$ is the minimal symplectic leaf inside $T^{*}\cX_{\underline{W}}/W$, cf. Subsection \ref{subsect:supports}.  
 
 Define $\widetilde{\bB}_{W'} := (\eta_{W'}^{*}(\eta_{\underline{W}*}(\bB^{\Xi})))\cap \bB_{W'} \subseteq \bB^{\Xi}_{\widehat{\underline{\cL}}^{\times}_{W'}}$. Note that this is an $\bH_{reg-W'}$-bimodule. Let us see that it is finitely generated. By a suitable straightforward adaptation of Lemma \ref{lemma_fg}, $(\eta_{\underline{W}*}(\bB^{\Xi}))_{\widehat{\underline{\cL}}_{W'}^{\times}}$ is finitely generated over the algebra $\CC[\widehat{\underline{\cL}}_{W'}^{\times}][\fc]\otimes_{S(\fc)} \CC[\hh^{*}]^{W}[\fc]^{\opp}$. Note that $\bB_{W'}$ is a $\CC[\widehat{\underline{\cL}}_{W'}][\fc]\otimes_{S(\fc)}\CC[\hh^{*}]^{W}[\fc]^{\opp}$-lattice inside of $(\eta_{\underline{W}*}(\bB^{\Xi}))_{\widehat{\underline{\cL}}_{W'}^{\times}}$. So what we need to show is that $\eta_{W'}^{*}(\eta_{\underline{W}*}(\bB^{\Xi}))\cap\bL$ is finitely generated over $\CC[\hh^{reg-W'}/W'][\fc]\otimes_{S(\fc)}\CC[\hh^{*}/W][\fc]^{\opp}$ for \emph{some} lattice $\bL$. We can produce such a lattice as follows. Again thanks to Lemma \ref{lemma_fg}, $\bB$ is finitely generated over $\CC[\hh^{reg-\underline{W}}/\underline{W}][\fc]\otimes_{S(\fc)} \CC[\hh^{*}/W][\fc]^{\opp}$, so we have an epimorphism $\Upsilon: (\CC[\hh^{reg-\underline{W}}/\underline{W}][\fc] \otimes_{S(\fc)} \CC[\hh^{*}/W][\fc]^{\opp})^{\oplus n} \rightarrow \bB$, which in turn induces an epimorphism $\overline{\Upsilon}: (\CC[\widehat{\underline{\cL}}_{W'}^{\times}][\fc] \otimes_{S(\fc)} \CC[\hh^{*}/W][\fc]^{\opp})^{\oplus n} \rightarrow (\eta_{\underline{W}*}(\bB^{\Xi}))_{\widehat{\underline{\cL}}_{W'}^{\times}}$. We take $\bL$ to be the image of the restriction of $\overline{\Upsilon}$ to $(\CC[\widehat{\underline{\cL}}_{W'}][\fc] \otimes_{S(\fc)} \CC[\hh^{*}/W][\fc]^{\opp})^{\oplus n}$. This is clearly a lattice. Since $\CC[\hh^{reg-W'}/W']\cap \CC[\widehat{\underline{\cL}}_{W'}] = \CC[\hh^{reg-W'}/W']$ we have that  $\bL \cap \eta_{W'}^{*}(\eta_{\underline{W}*}(\bB^{\Xi}))$ coincides with the intersection of $\eta_{W'}^{*}(\eta_{\underline{W}*}(\bB^{\Xi}))$ with the image of the restriction of $\overline{\Upsilon}$ to $(\CC[\hh^{reg-W'}/W'][\fc] \otimes_{S(\fc)} \CC[\hh^{*}/W][\fc]^{\opp})^{\oplus n}$. So $\bL \cap \eta_{W'}^{*}(\eta_{\underline{W}*}(\bB^{\Xi}))$ is finitely generated over $\CC[\hh^{reg-W'}/W'][\fc]\otimes_{S(\fc)} \CC[\hh^{*}/W][\fc]^{\opp}$. Note that it follows that $\widetilde{\bB}_{W'}$ is a HC $\bH_{reg-W'}$-bimodule with $\Supp(\widetilde{\bB}_{W'}) = \overline{\cL^{W'}_{\underline{W}}}$. 
 
 It remains to show that $(\widetilde{\bB}_{W'})_{reg-\underline{W}} = \bB$. Since $\bB = (\eta_{\underline{W}*}(\bB^{\Xi}))_{reg-\underline{W}} = (\eta_{\underline{W}*}(\bB^{\Xi})_{reg-W'})_{reg-\underline{W}}$ it is enough to check that the lift of $\widetilde{\bB}_{W'}$ to $\hh^{reg-\underline{W}}/\underline{W}$ coincides with that of $(\eta_{\underline{W}*}(\bB^{\Xi}))_{reg-W'}$. By definition, $\widetilde{\bB}_{W'} \subseteq (\eta_{\underline{W}*}(\bB^{\Xi}))_{reg-W'} \subseteq (\eta_{\underline{W}*}(\bB^{\Xi}))_{\widehat{\underline{\cL}}_{W'}^{\times}}$. Now, for every $b \in (\eta_{\underline{W}*}(\bB^{\Xi}))_{\widehat{\underline{\cL}}_{W'}^{\times}}$ there exists $f \in \CC[\widehat{\underline{\cL}}_{W'}]$ vanishing on $\underline{\cL}_{W'}$ with $fb \in \bB_{W'}$. If, moreover, $b \in (\eta_{\underline{W}*}(\bB^{\Xi}))_{reg-W'}$ then $f \in \CC[\hh^{reg-W'}/W']$ and $fb \in \widetilde{\bB}_{W'}$. This implies the desired result. \\
 
 {\it Part 2: Glueing.} First we will define a sheaf on $\hh$, then we will take $W$-invariants to pass to $\hh/W$. For each parabolic subgroup $W'$ containing $\underline{W}$ in corank 1, let $\pi_{W'}: \hh^{reg-W'} \rightarrow \hh^{reg-W'}/W'$ be the projection, and $\iota_{W'}: \hh^{reg-W'} \rightarrow \hh$ the inclusion. So we can consider $\iota_{W'*}\pi_{W'}^{*}\widetilde{\bB}_{W'}$. We will take the intersection of these sheaves, so we need to find a sheaf containing all of them. Since $\hh^{reg-W'} \subseteq \hh^{sr-\underline{W}}$ for all $W'$, this will be a sheaf defined on $\hh^{sr-\underline{W}}$. So let $\pi: \hh \rightarrow \hh/W$ and $\iota:\hh^{sr-\underline{W}} \rightarrow \hh$ be the natural projection and inclusion, respectively. By the construction, viewing $\eta_{\underline{W}*}(\bB^{\Xi})$ as a quasicoherent sheaf on $\hh/W$, we may think of $\iota_{W'*}\pi_{W'}^{*}\widetilde{\bB}_{W'}$ as being contained inside of $\iota_{*}\pi^{*}\eta_{\underline{W}*}(\bB^{\Xi})$. So the intersection 
 
$$
\widetilde{\bB} := \bigcap_{\substack{\underline{W} \subseteq W' \\ \text{in corank} \; 1}} \iota_{W'*}\pi_{W'}^{*}\widetilde{\bB}_{W'}.
$$

\noindent makes sense and is a sheaf on $\hh^{sr-\underline{W}}$. 

Note that $W$ acts naturally on $\iota_{*}\pi^{*}\eta_{\underline{W}*}(\bB^{\Xi})$. Now notice that, for a parabolic subgroup $W' \subseteq W$ and $w \in W$, we have a canonical, graded isomorphism $\bH_{reg-W'} \cong \bH_{reg-wW'w^{-1}}$. Indeed, recall that $\bH_{reg-W'}$ is the rational Cherednik algebra for the action of $W$ on $\cX_{W'}$, a disjoint union of Zariski open subsets of $\hh$, cf. Subsection \ref{subsect:supports}. It is clear that $\cX_{W'} = \cX_{wW'w^{-1}}$ and the isomorphism between the algebras follows. So tracing back the construction, we see that we can pick our bimodules $\bB_{\widehat{\underline{\cL}}_{W'}}$ in such a way that, for $w \in W$, $w(\iota_{W'*}\pi_{W'}^{*}\widetilde{\bB}_{W'}) = \iota_{wW'w^{-1}*}\pi^{*}_{wW'w^{-1}}\widetilde{\bB}_{wW'w^{-1}}$. So $\widetilde{\bB}$ is $W$-stable. Finally, define 

$$
\widehat{\bB} := (\pi_{*}\widetilde{\bB})^{W},
$$

\noindent where $\pi: \hh \rightarrow \hh/W$ is the projection. We claim that $\widehat{\bB}$ is stable under the bimodule action of $\bH$. To see this first note that, by definition, $\widehat{\bB} = \pi_{*}\widetilde{\bB} \cap \jmath_{*}\bB^{\Xi}$, where $\jmath: \hh^{reg-\underline{W}} \rightarrow \hh/W$ is the projection. Each one of the bimodules on the right-hand side of the previous equality is stable under the (left or right) action of $\bH$. So $\widehat{\bB}$ is also $\bH$-stable.\\

Now set $\overline{\bB} := \Gamma(\pi(\hh^{sr-\underline{W}}), \widehat{\bB})$. We have that $\overline{\bB}$ is a $\bH$-bimodule. We claim that it is HC. First of all, since $\widehat{\bB} \subseteq \jmath_{*}\bB^{\Xi}$, we have that $\overline{\bB}$ is $\CC[\hbar]$-flat. It is also clear that $\overline{\bB}/\hbar\overline{\bB}$ is a module over $Z(\bH/\hbar\bH)$ and that $\overline{\bB}$ is graded. So, to finish the claim that $\overline{\bB}$ is HC, we need to show that it is finitely generated. We will show that, in fact, $\overline{\bB}/\fc\overline{\bB}$ is finitely generated over the algebra $\CC[\hh/W] \otimes \CC[\hh^{*}/W]$. The following is an easy consequence of \cite[Lemma 3.6]{wilcox}.

\begin{lemma}\label{lemma:wilcox}
Let $X$ be an affine Noetherian scheme, and let $U \subseteq X$ be an open subset of $X$ whose complement has codimension at least 2. Let $M$ be a coherent sheaf on $U$, and assume that the support of any global section $m \in \Gamma(U, M)$ contains an irreducible component of $U$. Then, $\Gamma(U, M)$ is finitely generated over $\CC[X]$.
\end{lemma}
\begin{proof}
By \cite[Lemma 3.6]{wilcox}, we get that $\Gamma(U, M)/I\Gamma(U, M)$ is a finitely generated $\CC[X]/I\CC[X]$-module, where $I$ is the nilradical of $\CC[X]$. The result follows.
\end{proof}

Note that we can look at $\widehat{\bB}/\fc\widehat{\bB}$ as a coherent sheaf on an infinitesimal neighborhood $U$ of $\pi(\hh^{sr-\underline{W}}\cap\hh^{\underline{W}}) \times \hh^{*}/W$, this follows from our assumptions on the singular support of $\bB$, $\Supp(\bB) = \cL^{\underline{W}}_{\underline{W}}$, the construction of $\widehat{\bB} \subseteq \jmath_{*}\bB^{\Xi}$ and Lemma \ref{lemma:ann}. This infinitesimal neighborhood may be regarded as an open set inside an infinitesimal neighborhood $X$ of $\pi(\hh^{sr-\underline{W}}) \times \hh^{*}/W$. Now, it follows from Lemma \ref{lemma:support_elements} that the support of any global section $m \in \Gamma(U, \widehat{\bB}/\fc\widehat{\bB})$ contains $\pi(\hh^{\underline{W}}_{reg})\times W(\hh^{*})^{\underline{W}}/W$. Then, it follows from Lemma \ref{lemma:wilcox} that $\widehat{\bB}/\fc\widehat{\bB}$ is finitely generated over $\CC[X]$. In particular, it is finitely generated over $\CC[\hh/W] \otimes \CC[\hh^{*}/W]$, this follows because the codimension of the complement of $\hh^{\underline{W}-sr}$ in $\hh$ is 2. Then, $\overline{\bB}$ is a HC $\bH$-bimodule. By construction, $\overline{\bB}_{reg-\underline{W}} = \bB$. This finishes the proof of Theorem \ref{thm_main}. 
\end{proofmain}

Let us remark one important feature of the bimodule $\overline{\bB}$ we have constructed: it has no sub-bimodules whose singular support is properly contained inside $\overline{\cL_{\underline{W}}}$. Indeed, this follows from Corollary \ref{cor:support_important} and the fact that $\widehat{\bB} \subseteq \bB^{\Xi}$.

\subsection{Specializing parameters.}\label{subsect:specpar} Let $c, c': \CC[\cS]^{W} \rightarrow \CC$ be parameters. Recall that $\R_{\hbar}(H_{c})$, $\R_{\hbar}(H_{c'})$ are quotients of $\bH$ and that, if $B$ is a HC $H_{c}\text{-}H_{c'}$-bimodule with a good filtration, then $\R_{\hbar}(B)$ is a HC $\bH$-bimodule. A similar result holds for HC $H_{c,reg-\underline{W}}\text{-}H_{c', reg-\underline{W}}$-bimodules. We remark that, since $\hh^{*}$ is in degree 0, the Rees construction commutes with localization: for an affine, open subset $U \subseteq \hh/W$, $\R_{\hbar}(B)|_{U} = \R_{\hbar}(B|_U)$. We also remark that the Bezrukavnikov-Etingof isomorphisms hold in the specialized setting. So we can take a $\Xi$-equivariant HC $H_{c, reg-\underline{W}}\text{-}H_{c', reg-\underline{W}}$-bimodule $B$ such that $\Supp(B) = \cL^{\underline{W}}_{\underline{W}}$ and for every parabolic subgroup $W'$ containing $\underline{W}$ in corank 1, the $H_{c}(W', \hh)_{\widehat{\underline{\cL}}^{\times}_{W'}}\text{-}H_{c'}(W', \hh)_{\widehat{\underline{\cL}}^{\times}_{W'}}$-bimodule $B^{\Xi}_{\widehat{\underline{\cL}}^{\times}_{W'}}$ is the localization of a HC $H_{c}(W', \hh)_{\widehat{\underline{\cL}}_{W'}}\text{-}H_{c'}(W', \hh)_{\widehat{\underline{\cL}}_{W'}}$-bimodule. By Theorem \ref{thm_main}, we can find a HC $\bH$-bimodule $\overline{\bB}$ that lifts to $\R_{\hbar}(B)$. Since $\overline{\bB}_{\widehat{\underline{\cL}}_{W'}}$ is a bimodule over $\R_{\hbar}(H_{c}(W', \hh)_{\widehat{\underline{\cL}}_{W'}})\text{-}\R_{\hbar}(H_{c'}(W', \hh)_{\widehat{\underline{\cL}}_{W'}})$, we see that the bimodule $\overline{\bB}$ factors through $\R_{\hbar}(H_{c})\text{-}\R_{\hbar}(H_{c'})$, so $\overline{\bB}/(\hbar - 1)\overline{\bB}$ is a HC $H_{c}\text{-}H_{c'}$-bimodule that lifts to $B$. We summarize this discussion in the following theorem, which is a specialized version of Theorem \ref{thm_main}.

\begin{theorem}\label{thm_mainparameters}
Let $\underline{W}$ be a parabolic subgroup of $W$, and let $B$ be a $\Xi$-equivariant HC $H_{c, reg-\underline{W}}\text{-}H_{c', reg-\underline{W}}$-bimodule,  where $c, c'\in \CC[\cS]$ are conjugation invariant functions. Assume that $\Supp(B) = \cL^{\underline{W}}_{\underline{W}}$ and that for all minimal parabolic subgroups $W' \subseteq W$ containing $\underline{W}$, the bimodule $(\eta_{\underline{W}*}(B^{\Xi}))_{\widehat{\underline{\cL}}^{\times}_{W'}}$ is the localization of a HC $H_{c}(W', \hh)_{\widehat{\underline{\cL}}_{W'}}\text{-}H_{c'}(W', \hh)_{\widehat{\underline{\cL}}_{W'}}$-bimodule. Then, there exists a HC $H_{c}\text{-}H_{c'}$-bimodule $\overline{B}$ such that $\overline{B}_{reg-\underline{W}} = B$. 
\end{theorem}

Now Theorem \ref{thm:veryverytechnical} is a consequence of Theorem \ref{thm_mainparameters}. \\

\begin{proofthmvery}
Recall the functor $\cG$ defined in Subsection \ref{subsection:induction}. Since $B$ is finite dimensional, $\Supp(\cG(\R_{\hbar}(B))) = \cL^{\underline{W}}_{\underline{W}}$. Our assumptions on $B$ imply, since $\cG$ is a fully faithful embedding, that $\cG(\R_{\hbar}(B))$ satisfies the conditions of Theorem \ref{thm_mainparameters}. So we can find a HC $\bH$-bimodule $\overline{\bB}$ with $\overline{\bB}_{reg-\underline{W}} = \cG(\R_{\hbar}(B))$. Note that since $\Supp(\cG(\R_{\hbar}(B))) = \cL^{\underline{W}}_{\underline{W}}$, $(\overline{\bB}_{reg-\underline{W}})^{\underline{\cL}} = \overline{\bB}_{reg-\underline{W}}$. Thus, by the construction of the restriction functor, $\overline{\bB}_{\dagger^{W}_{\underline{W}}} = \R_{\hbar}B$. It remains to put $\overline{B} := \overline{\bB}/(\hbar - 1)\overline{\bB}$.
\end{proofthmvery}

\section{Localization of HC bimodules.}\label{sect:localization}
Now we turn our attention to the study of Harish-Chandra bimodules with full support. It is natural to localize these bimodules to the regular locus $\hh^{reg}$ to obtain a bimodule over the algebra $\cD(\hh^{reg}\#W$, recall from Lemma \ref{lemma_loc} that left, right, and two-sided localization of a HC bimodule all coincide. In this section, we will see which $\cD(\hh^{reg})\#W$-bimodules can appear as the localization of irreducible HC bimodules. This will allow us to see, in particular, for which irreducible modules $M \in \cO_{c}$ the space of locally finite maps $\homf{\Delta_{c'}(\triv)}{M}$ can be nonzero. In the next section, we will use this result to prove Theorem \ref{thm:main1}.

We would like to remark that, after writing a preliminary version of this paper, we found out that most of the results in this section are already contained in some form in \cite{spencer}. There it is assumed that all parameters are regular (i.e., that $\cO_{c}$ is a semisimple category or, equivalently, that the algebra $H_{c}$ is simple) but, under minor modifications we provide here, the results are still valid in the general case, see Remark \ref{rmk:socleKZ}. 

\subsection{Bimodules over $\cD(\hh^{reg})\#W$}\label{subsect:4.1} Recall that, independently of the parameter $c$, $H_{c, \hh^{reg}/W}$ is isomorphic to $\cD(\hh^{reg})\#W$. So we start by studying bimodules over the latter algebra. Since $W$ acts freely on $\hh^{reg}$, the algebras $\cD(\hh^{reg})\#W$ and $\cD(\hh^{reg}/W)$ are Morita equivalent, an equivalence is given by $M \mapsto eM$ where $e = |W|^{-1}\sum_{w \in W}w$ is the trivial idempotent for $W$. Throughout this section, we denote $X := \hh^{reg}/W$. 

We will relate HC $H_{c}$-bimodules to spaces of differential operators on local systems on $X$, i.e., on $\cD(X)$-modules which are finitely generated over $\CC[X]$. So, first, we recall Grothendieck's definition of differential operators: if $M$ and $N$ are $\CC[X]$-modules, then the space of $\CC[X]$-differential operators on $M$ with values in $N$ is a subspace of $\Hom_{\CC}(M, N)$, defined via an increasing filtration $\Diff(M, N) = \bigcup_{n \geq 0} \Diff(M,N)_{n}$, where the components $\Diff(M, N)_{n}$ are inductively defined as follows:
\begin{equation*}
\Diff(M, N)_{-1} := 0, \; \; \; \Diff(M, N)_{n+1} := \{f \in \Hom_{\CC}(M, N) : [a,f] \in \Diff(M, N)_{n} \; \text{for all} \; a \in \CC[X]\}.
\end{equation*}

If $M, N$ are $\cD(X)$-modules, then $\Diff(M, N)$ is a $\cD(X)$-bimodule. We remark that if $N$ is a local system then we have a $\cD(X)$-bimodule isomorphism, $N \otimes_{\CC[X]} \cD(X) \cong \Diff(\CC[X], N)$, where the flat connection on $N \otimes_{\CC[X]}\cD(X)$ giving rise to the left $\cD(X)$-action is as in \cite[Proposition 1.2.9]{HTT}. An explicit isomorphism is given by $n \otimes_{\CC[X]} d \mapsto (f \mapsto d(f)n)$. Note that this implies that $\Diff(\CC[X], N)$ is finitely generated both as a right and as a left $\cD(X)$-module whenever $N$ is a local system. As a right $\cD$-module, an explicit set of generators is $n_1 \otimes_{\CC[X]}1, \dots, n_i \otimes_{\CC[X]}1$, where $n_1, \dots, n_i$ are generators of the $\CC[X]$-module $N$. This set also generates $N \otimes_{\CC[X]}\cD(X)$ as a left $\cD$-module. Since the algebra $\cD(X)$ is simple and noetherian, \cite[Theorem 10]{Br} implies the following. 

\begin{lemma}\label{lemma:brown}
Let $N$ be a nonzero local system. Then, $\Diff(\CC[X], N)$ is a progenerator in both categories of left and right $\cD(X)$-modules. In particular, $\Diff(\CC[X], N)\otimes_{\cD(X)} M \not= 0$ (resp. $M \otimes_{\cD(X)}\Diff(\CC[X], N) \not= 0$) for any nonzero left (resp. right) $\cD(X)$-module $M$.
\end{lemma}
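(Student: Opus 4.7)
The plan is to exploit the isomorphism $\Diff(\CC[X], N) \cong N \otimes_{\CC[X]} \cD(X)$ recalled just before the lemma to identify $\Diff(\CC[X], N)$ as a finitely generated projective module on each side, and then invoke the cited theorem of Brown, which for a simple noetherian ring promotes any nonzero finitely generated projective module to a progenerator. The final non-vanishing statement will then follow from an elementary summand argument.

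First I would record that $X = \hh^{reg}/W$ is a smooth, connected, affine variety, so $\cD(X)$ is a simple noetherian ring. Since $N$ is a local system, it is finitely generated projective as a $\CC[X]$-module, say a direct summand of $\CC[X]^k$. The isomorphism above then exhibits $\Diff(\CC[X], N) \cong N \otimes_{\CC[X]}\cD(X)$ as a direct summand of $\cD(X)^k$ in the category of right $\cD(X)$-modules, hence as a finitely generated projective right $\cD(X)$-module. The analogous statement on the left is obtained by running the same argument for the left $\cD(X)$-module structure built from the flat connection on $N$ as in \cite[Proposition 1.2.9]{HTT} (equivalently, by transposing the construction with $N^*$, which is again a local system), so that $\Diff(\CC[X], N)$ is finitely generated projective as a left $\cD(X)$-module as well.

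Now apply \cite[Theorem 10]{Br}: over a simple noetherian ring, every nonzero finitely generated projective module is a progenerator. Applied on each side this yields the first assertion of the lemma. The "in particular" clause is then immediate: being a right generator means $\cD(X)$ is a direct summand of $\Diff(\CC[X], N)^k$ for some $k$, so for any nonzero left $\cD(X)$-module $M$ one has that $M = \cD(X)\otimes_{\cD(X)} M$ is a direct summand of $\Diff(\CC[X], N)^k \otimes_{\cD(X)} M$, forcing the latter to be nonzero. The argument on the other side is symmetric.

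The main obstacle I anticipate is verifying projectivity on the left: the isomorphism $\Diff(\CC[X], N) \cong N \otimes_{\CC[X]} \cD(X)$ is naturally a statement about right $\cD(X)$-modules, while the left $\cD(X)$-action is twisted by the connection on $N$, so a little care is needed to see that splitting $N$ off $\CC[X]^k$ as a $\CC[X]$-module yields a splitting compatible with the left $\cD$-structure (or, alternatively, to argue via duality). Once projectivity on both sides is in hand, the rest of the proof is formal.
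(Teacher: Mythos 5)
Your approach closely tracks the paper's: both hinge on the isomorphism $\Diff(\CC[X], N) \cong N \otimes_{\CC[X]} \cD(X)$, the observation that $\cD(X)$ is simple noetherian, and a citation of \cite[Theorem 10]{Br}. The difference is in how Brown's theorem is invoked. The paper records only that $\Diff(\CC[X], N)$ is \emph{finitely generated} on each side before citing the theorem; it never verifies projectivity. So the version of Brown's theorem the paper has in mind is stronger than the one you use (something along the lines of: a nonzero finitely generated module of full Gelfand--Kirillov dimension over a simple noetherian ring of finite homological dimension is automatically a progenerator). This matters, because in the next proposition the paper applies the same theorem to arbitrary nonzero sub-bimodules and quotient bimodules of $\Diff(\CC[X], N)$, which have no reason to be projective a priori. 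You instead use the easier and more self-contained fact that a nonzero finitely generated \emph{projective} module over a simple ring is a progenerator (a trace-ideal argument via the dual basis lemma), and therefore must establish projectivity, which the paper does not. That is a perfectly legitimate alternative for this lemma, but be aware it would not transfer to the later proposition.

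You correctly flag the one nontrivial step: projectivity of the left $\cD(X)$-module structure, where the action is twisted by the flat connection on $N$. Here is a clean way to close it. Write $N_{\cO}$ for $N$ regarded only as a $\CC[X]$-module. Since $N_{\cO} \otimes_{\CC[X]} \cD(X)$, with $\cD(X)$ acting on the right tensor factor, is the $\cD(X)$-module freely induced from the $\cO$-module $N$, the $\CC[X]$-linear map $N \to N \otimes_{\CC[X]} \cD(X)$, $n \mapsto n \otimes 1$, extends uniquely to a map of left $\cD(X)$-modules $\psi \colon N_{\cO} \otimes_{\CC[X]} \cD(X) \to N \otimes_{\CC[X]} \cD(X)$, where the target carries the Leibniz-rule left action of \cite[Proposition 1.2.9]{HTT}. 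This $\psi$ is filtered with respect to the order filtration on $\cD(X)$, and the associated graded map $N \otimes_{\CC[X]} \sym(\Theta_X) \to N \otimes_{\CC[X]} \sym(\Theta_X)$ is the identity; hence $\psi$ is an isomorphism. Since $N$ is projective over $\CC[X]$, the source $N_{\cO} \otimes_{\CC[X]} \cD(X)$ is a direct summand of $\cD(X)^{k}$, so the left $\cD(X)$-module $\Diff(\CC[X], N)$ is finitely generated projective. With that filled in, your proof is complete.
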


Now assume that $N$ is an irreducible local system. We have a natural evaluation map of $\cD(X)$-modules 
$$
\Diff(\CC[X], N) \otimes_{\cD(X)} \CC[X] \rightarrow N, \; \phi \otimes f \mapsto \phi(f).
$$
\noindent Since $\Diff(\CC[X], N) \neq 0$, the evaluation map is not zero. Then, by the irreducibility of $N$, this map is surjective. We claim that it is also injective. To see this, note that we have an isomorphism $\Diff(\CC[X], N)\otimes_{\cD(X)} \CC[X] \cong N$ of $\CC[X]$-modules. This follows from the description of $\Diff(\CC[X], N)$ above. So we can view the evaluation map as an element of $\End_{\CC[X]}(N, N)$. Now $N$ is noetherian since it is finitely generated over $\CC[X]$. We have seen that the evaluation map is surjective. Hence, it must also be injective. This discussion has the following consequence.

\begin{prop}\label{irredbimod}
If $N$ is an irreducible local system over $X$, then the $\cD(X)$-bimodule $\Diff(\CC[X], N)$ is irreducible.
\end{prop}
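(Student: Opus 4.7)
The plan is to use the order filtration on $\Diff(\CC[X], N)$ coming from its identification with $N \otimes_{\CC[X]} \cD(X)$, reduce the problem to showing that any nonzero sub-bimodule must contain the order-zero part $N \otimes 1$, and then invoke the fact that $N \otimes 1$ already generates $N \otimes_{\CC[X]} \cD(X)$ as a right $\cD(X)$-module (visible from the explicit formula $(n \otimes 1)\cdot d = n \otimes d$).

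First I would set up the order filtration $\Diff_{\leq k} := N \otimes_{\CC[X]} \cD(X)_{\leq k}$ and record two facts. (i) The order-zero part $\Diff_{\leq 0} = N \otimes 1$ is precisely the $\CC[X]$-centralizer of $\Diff(\CC[X], N)$, because order-zero operators are exactly the $\CC[X]$-linear maps $\CC[X] \to N$, which are in bijection with $N$ via evaluation at $1$. (ii) For any $\phi \in \Diff(\CC[X], N)$ of order $k$ and any $f \in \CC[X]$, the commutator $[f, \phi]$ has order at most $k-1$.

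Now let $M \subseteq \Diff(\CC[X], N)$ be a nonzero sub-bimodule, and pick $\phi \in M$ nonzero of minimal order $k$. If $k > 0$, then by (i) $\phi$ is not $\CC[X]$-linear, so there exists $f \in \CC[X]$ with $[f, \phi] \neq 0$. But $[f, \phi] = f\cdot\phi - \phi\cdot f$ lies in $M$ since $M$ is closed under both the left and right $\cD(X)$-actions, and by (ii) it has order strictly less than $k$, contradicting minimality. Hence $k = 0$, and in particular $M \cap \Diff_{\leq 0} \neq 0$.

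Finally I would verify that $M \cap \Diff_{\leq 0}$ is a $\cD(X)$-submodule of $\Diff_{\leq 0} \cong N$ recovering the original $\cD(X)$-module structure on $N$. For $f \in \CC[X]$, left and right multiplication on $\Diff_{\leq 0}$ agree and correspond to multiplication by $f$ on $N$. For a vector field $v \in T(X)$ and $\phi = n \otimes 1 \in \Diff_{\leq 0}$, both $v\cdot\phi$ and $\phi\cdot v$ lie in $\Diff_{\leq 1}$, and a direct Leibniz-rule computation gives $v\cdot\phi - \phi\cdot v = v(n) \otimes 1$, which is precisely the connection action of $v$ on $N$ under the identification $\Diff_{\leq 0} \cong N$; since both $v\cdot\phi$ and $\phi\cdot v$ lie in $M$, so does their difference. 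As $\CC[X]$ and $T(X)$ generate $\cD(X)$, the intersection $M \cap \Diff_{\leq 0}$ is a nonzero $\cD(X)$-submodule of the simple $\cD(X)$-module $N$, hence equals all of $N \otimes 1$. Since $N \otimes 1$ generates the whole bimodule on the right, $M = \Diff(\CC[X], N)$. The main delicate step is matching the commutator action of vector fields on $\Diff_{\leq 0}$ with the connection on $N$, as this is exactly what lets the simplicity of $N$ as a $\cD(X)$-module feed back into simplicity of $\Diff(\CC[X], N)$ as a bimodule.
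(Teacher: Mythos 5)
Your proof is correct, but it takes a genuinely different route from the paper's. The paper invokes Brown's theorem (\cite[Theorem 10]{Br}) to conclude that \emph{any} nonzero sub- or quotient bimodule of $\Diff(\CC[X], N)$ is automatically a progenerator of $\cD(X)$-mod on both sides, then combines this with the isomorphism $\Diff(\CC[X],N)\otimes_{\cD(X)}\CC[X]\cong N$ and a $\mathrm{Tor}$-vanishing argument to derive a contradiction from the simplicity of $N$. Your argument instead works directly with the Grothendieck order filtration: commutation with functions strictly drops the order, so any nonzero sub-bimodule must meet the order-zero part $N\otimes 1$; the Leibniz-rule computation $[v,\,n\otimes 1]=v(n)\otimes 1$ then identifies the adjoint action on $N\otimes 1$ with the flat connection on $N$, so simplicity of $N$ as a $\cD(X)$-module forces the intersection to be all of $N\otimes 1$, which generates on the right. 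Your version is more elementary and self-contained (no appeal to Brown's result on Noetherian bimodules over simple Noetherian rings), and it makes visible exactly where the connection on $N$ enters. The paper's version is shorter once Brown's theorem is granted, and that theorem is reused in the same section (e.g.\ in the proof of Proposition~\ref{prop:subbimod}), so the authors get some economy from citing it. One small point worth making explicit in your write-up: you should state that the Grothendieck filtration on $\Diff(\CC[X],N)$ agrees with the filtration $N\otimes_{\CC[X]}\cD(X)_{\le k}$ induced by the isomorphism (or just work throughout with the intrinsic Grothendieck filtration, invoking the isomorphism only at the very end to see that $N\otimes 1$ generates the bimodule as a right $\cD(X)$-module).
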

\begin{proof}
 Assume $\Diff(\CC[X], N)$ has a nontrivial sub-bimodule $V$, and let $V'$ be the quotient bimodule. The result of \cite[Theorem 10]{Br} implies that {\it any} nonzero sub-bimodule or quotient module of $\Diff(\CC[X], N)$ must be a progenerator of $\cD(X)$-mod and mod-$\cD(X)$. Then, $\text{Tor}^{1}_{\cD(X)}(V', N) = 0$ and we have a short exact sequence

$$
0 \rightarrow V \otimes_{\cD(X)} N \rightarrow \Diff(\CC[X], N)\otimes_{\cD(X)} N \rightarrow V' \otimes_{\cD(X)} N \rightarrow 0.
$$

\noindent By the discussion above, the module  $\Diff(\CC[X], N)\otimes_{\cD(X)} N$ is irreducible. This forces one of $V \otimes_{\cD(X)} N$ or $V' \otimes_{\cD(X)} N$ to be $0$. But both $V, V'$ are progenerators of $\cD(X)$-mod and mod-$\cD(X)$. This is a contradiction.
\end{proof}


\subsection{Localization of HC bimodules.}\label{subsection:localization} We will apply the results of the previous subsection to give an explicit description of the localization of certain Harish-Chandra $H_{c}\text{-}H_{c'}$-bimodules. Namely, consider the Verma module $\Delta_{c'}(\triv)$. We will consider bimodules of the form $\homf{\Delta_{c'}(\triv)}{N}$, for $N \in \cO_{c}$ with full support. We first see that any irreducible HC $H_{c}\text{-}H_{c'}$-bimodule with full support is contained in such a bimodule. Throughout this section, we make use of the following observation.

\begin{rmk}
	We remark that $e\Delta_{c}(\triv)[\delta^{-1}] = \CC[X]$ with its natural left $\cD(X)$-action. This follows easily from an explicit description of the isomorphism $H_{c}[\delta^{-1}] \cong \cD(\hh^{reg})\#W$, which is given in terms of Dunkl-Opdam operators, see e.g. \cite[Proposition 4.5]{EG}. 
\end{rmk}

\begin{prop}\label{prop:subbimod}
Let $B$ be an irreducible HC $H_{c}\text{-}H_{c'}$-bimodule with full support. Then, there exists an irreducible object $T \in \cO_{c}$ such that $B$ is a sub-bimodule of $\homf{\Delta_{c'}(\triv)}{T}$.
\end{prop}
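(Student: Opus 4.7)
The plan is to reduce, via the Hom--tensor adjunction, to a nonvanishing statement for a tensor product. Specifically, giving a bimodule map $V \to \homf{\Delta_{c'}(\triv)}{T}$ is the same data as giving a bimodule map $V \otimes_{H_{c'}} \Delta_{c'}(\triv) \to T$, and $V \otimes_{H_{c'}} \Delta_{c'}(\triv) \in \cO_c$ by Proposition~\ref{prop:HCproperties}. Granting that this tensor product is nonzero, I will pick any simple quotient $T$ with projection $\pi$; the induced bimodule map $V \to \homf{\Delta_{c'}(\triv)}{T}$, $v \mapsto (m \mapsto \pi(v \otimes m))$, will be nonzero (else $\pi$ would vanish identically) and hence injective by simplicity of $V$. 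Thus the task reduces to proving $V \otimes_{H_{c'}} \Delta_{c'}(\triv) \neq 0$.

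For this, I localize to $\hh^{reg}$. Full support of $V$ gives $V[\delta^{-1}] \neq 0$; via the Morita equivalence $M \mapsto eM$ with $\cD(X)$-modules (where $X = \hh^{reg}/W$), $eV[\delta^{-1}]$ is a nonzero $\cD(X)$-bimodule. Since $\Delta_{c'}(\triv) \cong \CC[\hh]$ as a $\CC[\hh]$-module, one has $e\Delta_{c'}(\triv)[\delta^{-1}] \cong \CC[X]$ as a $\cD(X)$-module. Using flatness of localization (and compatibility with the right $H_{c'}$-action via the HC property, since $\delta^{m}$ is $W$-invariant for some $m$), one obtains
\[
(V \otimes_{H_{c'}} \Delta_{c'}(\triv))[\delta^{-1}] \cong eV[\delta^{-1}] \otimes_{\cD(X)} \CC[X],
\]
so the nonvanishing reduces to showing the right-hand side is nonzero.

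To finish, I observe that $eV[\delta^{-1}]$, being finitely generated as a $\cD(X)$-bimodule (inherited from $V$), admits a simple bimodule quotient $B$. The crucial structural input is that $B \cong \Diff(\CC[X], N)$ for some irreducible local system $N$ on $X$: Proposition~\ref{irredbimod} provides the forward direction, and the converse (every simple quotient of a HC $\cD(X)$-bimodule has this form) follows from a Morita- and Riemann--Hilbert-style classification. Given this, the identification $\Diff(\CC[X], N) \cong N \otimes_{\CC[X]} \cD(X)$ noted earlier yields $\Diff(\CC[X], N) \otimes_{\cD(X)} \CC[X] \cong N \neq 0$, and right-exactness of $-\otimes_{\cD(X)} \CC[X]$ propagates the surjection $eV[\delta^{-1}] \twoheadrightarrow B$ to a surjection $eV[\delta^{-1}] \otimes_{\cD(X)} \CC[X] \twoheadrightarrow N \neq 0$, completing the argument. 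The main hurdle is the converse classification of simple $\cD(X)$-bimodules used in the last step; an alternative that avoids it is to start from the embedding $V \hookrightarrow \homf{M}{N'}$ provided by the previous proposition (with $M, N'$ simple of full support), localize to $eV[\delta^{-1}] \hookrightarrow \Diff(P, Q)$ for nonzero local systems $P = eM[\delta^{-1}]$ and $Q = eN'[\delta^{-1}]$, and then exploit $\Diff(P, Q) \otimes_{\cD(X)} \CC[X] \cong \Hom_{\CC[X]}(P, Q) \neq 0$---though then one must separately verify that nonvanishing transfers across the sub-bimodule inclusion, where the tensor product may fail to be left-exact.
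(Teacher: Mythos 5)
Your reduction to showing $V \otimes_{H_{c'}} \Delta_{c'}(\triv) \neq 0$ and the localization identity $(V \otimes_{H_{c'}} \Delta_{c'}(\triv))[\delta^{-1}] \cong eV[\delta^{-1}] \otimes_{\cD(X)} \CC[X]$ both match the paper's strategy. The gap is in your final step. You invoke a ``converse classification'' asserting that every simple $\cD(X)$-bimodule quotient $B$ of $eV[\delta^{-1}]$ has the form $\Diff(\CC[X],N)$ for an irreducible local system $N$. This is not proved anywhere in the paper, and it is not a consequence of Proposition~\ref{irredbimod}, which only goes one way (it shows $\Diff(\CC[X],N)$ is simple, not that all relevant simples arise this way). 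Nothing you cite rules out, say, a simple $\cD(X)$-bimodule supported off the diagonal, or one whose evaluation on $\CC[X]$ vanishes; you would need to know that $B$ is a local system when restricted to the diagonal and in particular that its de Rham-type fiber is nonzero, which is exactly the nonvanishing you are trying to prove. Your proposed alternative via $V \hookrightarrow \homf{M}{N'}$ has the left-exactness problem you yourself flag.

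The paper sidesteps all of this with a single structural input you do not use: \cite[Theorem 10]{Br} (Brown), already cited just before Lemma~3.1, which says that a noetherian bimodule over the simple noetherian ring $\cD(X)$ is automatically a progenerator in both module categories. Since $e(V|_{\hh^{reg}})e$ is a nonzero noetherian $\cD(X)$-bimodule (noetherianness descending from the HC property of $V$), the progenerator property gives $e(V|_{\hh^{reg}})e \otimes_{\cD(X)} \CC[X] \neq 0$ immediately, with no classification of simple bimodules required. If you replace your ``simple quotient is $\Diff(\CC[X],N)$'' step with this citation, your argument aligns with the paper's proof.
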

\begin{proof}
Since $V$ has full support, we see that $B[\delta^{-1}] \not= 0$, so $e(B[\delta^{-1}])e \not= 0$. Note that $e(B[\delta^{-1}])e$ is a bimodule over the algebra $\cD(X)$ that is noetherian as either a left or right $\cD(X)$-module. Then, it is a progenerator of $\cD(X)\text{-mod}$, so $e(B[\delta^{-1}])e\otimes_{\cD(X)} \CC[X] \neq 0$. In particular, this implies that $B \otimes_{H_{c'}} \Delta_{c'}(\triv) \neq 0$. Let $T$ be an irreducible quotient of the latter module. It is easy to see that we have a nonzero bimodule map $B \rightarrow \homf{\Delta_{c'}(\triv)}{T}$, and the proof follows.
\end{proof}

Note that for an irreducible module $N \in \cO_{c}$, we have that $e\homf{\Delta_{c'}(\triv)}{N}[\delta^{-1}]e$ is a $\cD(X)$-bimodule. We claim that this bimodule is isomorphic to $\Diff(\CC[X], N_{X})$ whenever the former bimodule is nonzero and $N_{X} := eN[\delta^{-1}]$. This claim follows from Proposition \ref{irredbimod} together with the following result.

\begin{lemma}\label{lemma:localization}
Let $N \in \cO_{c}$ be an irreducible module with full support. For any standard module $\Delta_{c'}(\tau)$, the bimodule $e\homf{\Delta_{c'}(\tau)}{N}[\delta^{-1}]e$ is isomorphic to a sub-bimodule of $\Diff(e\Delta_{c'}(\tau)[\delta^{-1}], eN[\delta^{-1}])$. 
\end{lemma}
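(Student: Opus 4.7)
The plan is to construct an injective bimodule map $\Psi\colon\homf{\Delta_{c'}(\tau)}{N}|_X \to \Diff(\Delta_{c'}(\tau)|_X, N|_X)$, by extending each $\phi\in\homf{\Delta_{c'}(\tau)}{N}$ to a differential operator on the localization using ad-nilpotency. Take $a = \delta^m\in\CC[\hh]^W$ for $m$ the smallest positive integer making $\delta^m$ invariant; if $(\ad a)^{n+1}\phi = 0$ then the identity
$$\phi(a^k u) = \sum_{j=0}^{n}(-1)^j\binom{k}{j}a^{k-j}\bigl((\ad a)^j\phi\bigr)(u),$$
valid for $k\geq 0$, can be used as the definition for $k < 0$ to produce a $\CC$-linear extension $\Delta_{c'}(\tau)[\delta^{-1}]\to N[\delta^{-1}]$; well-definedness on the localization reduces to a Vandermonde-type binomial identity. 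Taking $W$-invariants gives a $\CC$-linear map $\Delta_{c'}(\tau)|_X\to N|_X$, and extending by $\cD(X)$-bilinearity yields $\Psi$.

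The key step is showing the image lies in $\Diff$. Every element of $\homf{\Delta_{c'}(\tau)}{N}$ lies in some HC sub-bimodule $V$; equip $V|_X$ with a good bimodule filtration $F_\bullet V|_X$, bounded below, whose associated graded is a coherent sheaf on $T^*X \times T^*X$. The HC property for $V$---coincidence of left and right actions of $\CC[(\hh\oplus\hh^*)/W]$ on $\gr V$---localizes to coincidence of left and right $\CC[X]$-actions on $\gr V|_X$, which precisely means $\ad(a)$ strictly lowers filtration degree for every $a\in\CC[X]$. Thus for $\psi\in F_i V|_X$ one has $(\ad a)^{i+1}\psi = 0$ uniformly in $a\in\CC[X]$. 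Since $\CC[X]$ is commutative, iterated adjoint brackets are symmetric in their inputs, and by polarization the uniform single-variable vanishing forces $\ad(a_0)\cdots\ad(a_i)\psi = 0$ for all $a_0,\ldots,a_i\in\CC[X]$, i.e., $\psi\in\Diff_i(\Delta_{c'}(\tau)|_X,N|_X)$.

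For injectivity, suppose $\Psi(\psi) = 0$ and write $\psi = \phi\cdot\delta^{-k}$ with $\phi\in\homf{\Delta_{c'}(\tau)}{N}$. Then the extension of $\phi$ vanishes as an operator on $\delta^{-k}\cdot\Delta_{c'}(\tau)|_X = \Delta_{c'}(\tau)|_X$; restricting to $\Delta_{c'}(\tau)\subset\Delta_{c'}(\tau)|_X$, the composition $\Delta_{c'}(\tau)\to N\hookrightarrow N|_X$ vanishes, where the injection uses that $N$ simple and non-torsion is $\delta$-torsion-free. Therefore $\phi = 0$ in $\homf{\Delta_{c'}(\tau)}{N}$ and $\psi = 0$. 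The main obstacle is the filtration argument of the second paragraph: it requires the standard (but not trivial) fact that for HC bimodules the coincidence of left and right central actions on the associated graded translates to $\ad$ strictly lowering a good filtration, which is exactly what promotes element-wise local nilpotency to the uniform bound needed for Grothendieck's definition of $\Diff$.
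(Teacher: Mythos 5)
Your proof follows essentially the same route as the paper's: extend each $f\in\homf{\Delta_{c'}(\tau)}{N}$ across the localization using the binomial identity that $\ad$-nilpotency of $\delta^m$ provides, then verify injectivity by reducing to $f$ itself and using that both $\Delta_{c'}(\tau)$ (free over $\CC[\hh]$) and $N$ (simple, non-torsion) embed into their localizations. Your injectivity argument is in fact a small cleanup of the paper's: the paper produces an auxiliary $a\in H_c$ so that $af(x)$ is non-torsion, whereas you observe directly that the torsion submodule of a simple non-torsion $N$ is zero, so $N\hookrightarrow N[\delta^{-1}]$ and any nonzero $f(x)$ already survives localization.

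The one place where you go beyond the paper is the middle paragraph: the paper's proof never actually verifies that the extended operator lies in $\Diff(\Delta_{c'}(\tau)|_X, N|_X)$ in Grothendieck's sense, which requires a \emph{uniform} bound on the order of nilpotency of iterated commutators with $\CC[X]$, not just the element-wise local nilpotency built into the HC condition. Your filtration argument — choose a good bimodule filtration on a HC sub-bimodule $V$, observe that the HC condition forces left and right $\CC[X]$-actions to agree on $\gr V|_X$, hence $\ad(a)$ strictly lowers filtration degree for all $a\in\CC[X]$ simultaneously — is the correct way to produce that bound and fills a genuine gap in the paper's terse proof. One small remark: the polarization step at the end of that paragraph is superfluous. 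Once $\ad(a)$ strictly lowers filtration degree for every $a\in\CC[X]$, one has $\ad(a_0)\cdots\ad(a_i)\psi\in F_{-1}V|_X=0$ for $\psi\in F_iV|_X$ directly, with no need to first establish the single-variable vanishing $(\ad a)^{i+1}\psi=0$ and then polarize. Also be slightly careful with the passage to $X=\hh^{reg}/W$: individual $f\in\homf{M}{N}$ are not $W$-equivariant, so what one really maps is $efe$, using the Morita equivalence $V\mapsto eVe$ on bimodules; the paper is similarly loose here, so this is not a defect specific to your write-up, but worth flagging.
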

\begin{proof}
Set $M := \Delta_{c'}(\tau)$. Let $f \in \homf{M}{N}$. Since for some $m$, $\delta^{m}$ is $W$-invariant, we have that $(\ad (e\delta^{m}))^{k}f = 0$, so for every $x \in M$,

$$
\sum_{j = 0}^{k}(-1)^{j}\left(^{k}_{j}\right)(e\delta^m)^{(k-j)}f((e\delta^m)^{j}x) = 0.
$$

Then, since $M$ is free as a $\CC[\hh]$-module we can extend $f$ to $eM[\delta^{-1}]$ by 

$$
f(\delta^{-m}x) = -(e\delta^m)^{-k}\sum_{j = 1}^{k}(-1)^{j}\left(^{k}_{j}\right)(e\delta^m)^{(k-j)}f((e\delta^m)^{(j-1)}x).
$$

To see that this actually defines an inclusion, assume that $f \not= 0$. Then, $f(x) \not= 0$ for some element $x \in M$. Since $N$ is torsion-free (see e.g. \cite[Proposition 5.21]{GGOR}), the element $f(x)$ is not a zero divisor. This implies that the image of $f$ in $\Diff(e\Delta_{c'}(\tau)[\delta^{-1}], eN[\delta^{-1}])$ is nonzero.
\end{proof}

\begin{cor}\label{cor:irredHCbimod}
Let $B$ be an irreducible HC $H_{c}\text{-}H_{c'}$-bimodule with full support. Then, there exists an irreducible local system $N$ on $X$ such that $eB[\delta^{-1}]e = \Diff(\CC[X], N)$.
\end{cor}
\begin{proof}
By Proposition \ref{prop:subbimod} and Lemma \ref{lemma:localization}, we have that $eB[\delta^{-1}]e \subseteq \Diff(\CC[X], N)$ for some irreducible local system $N$. But $\Diff(\CC[X], N)$ is an irreducible $\cD(X)$ bimodule, Proposition \ref{irredbimod}. We are done.
\end{proof}

\begin{cor}\label{cor:Hc_loc_fin_maps}
We have an isomorphism $H_{c} \cong \homf{\Delta_{c}(\triv)}{\Delta_{c}(\triv)}$.
\end{cor}
\begin{proof}
Reasoning as in the proof of Lemma \ref{lemma:localization}, we have that $\homf{\Delta_{c}(\triv)}{\Delta_{c}(\triv)}$ is a HC $H_{c}$-bimodule whose localization to $\hh^{reg}$ is an irreducible $\cD(\hh^{reg})\#W$-bimodule. So $\homf{\Delta_{c}(\triv)}{\Delta_{c}(\triv)}$ contains a unique irreducible bimodule with full support. It is easy to see that any subbimodule of $\homf{\Delta_{c}(\triv)}{\Delta_{c}(\triv)}$ has full support, so this bimodule has an irreducible socle. In particular, it is indecomposable.\\
On the other hand, we have a natural map $H_{c} \rightarrow \homf{\Delta_{c}(\triv)}{\Delta_{c}(\triv)}$, $x \mapsto (m \mapsto xm)$. Since the representation $\Delta_{c}(\triv)$ is faithful, this is an inclusion. Then, by Proposition \ref{prop:injective}, we have that $H_{c}$ must be isomorphic to a direct summand of $\homf{\Delta_{c}(\triv)}{\Delta_{c}(\triv)}$. By the previous paragraph, we must have $H_{c} \cong \homf{\Delta_{c}(\triv)}{\Delta_{c}(\triv)}$.
\end{proof}

\begin{rmk}
We remark that the isomorphism in Corollary \ref{cor:Hc_loc_fin_maps} is also an \emph{algebra} isomorphism with respect to the composition structure on $\homf{\Delta_{c}(\triv)}{\Delta_{c}(\triv)}$. This generalizes \cite[Proposition 8.10 (i)]{BEG}. 
\end{rmk}


\subsection{KZ functor.}\label{subsection:KZ} Since for any HC $H_{c}\text{-}H_{c'}$-bimodule $B$ and any module $M \in \cO_{c'}$, the module $B \otimes_{H_{c'}} M$ is a module in category $\cO_{c}$, it makes sense to ask what is the image of a module of the form $B \otimes_{H_{c'}} M$ under the KZ functor. In this subsection, we answer this question when $B$ has the form $\homf{\Delta_{c'}(\triv)}{M}$ for an irreducible module with full support $M \in \cO_{c}$. Namely, we have the following result.

\begin{lemma}\label{lemma:mainKZ}
Let $c, c': \cS \rightarrow \CC$ be conjugation invariant functions and consider the rational Cherednik algebras $H_{c}, H_{c'}$. Let $q, q'$ be the associated sets of parameters for the Hecke algebras $\cH_{q}, \cH_{q'}$, so that we have $\KZ_{c}: \cO_{c} \rightarrow \cH_{q}\modd$, $\KZ_{c'}: \cO_{c'} \rightarrow \cH_{q'}\modd$. Let $M \in \cO_{c}$ be an irreducible module with full support. Assume that $\homf{\Delta_{c'}(\triv)}{M} \not= 0$. Then, for every finite dimensional module $N \in \cH_{q'}\modd$, the $\pi_1(X)$-module $\KZ_{c}(M) \otimes_{\CC} N$ factors through $\cH_{q}$. 

\end{lemma}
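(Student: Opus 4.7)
The plan is to realize $\KZ_c(M) \otimes_{\CC} N$ as the $\pi_1(X)$-module underlying $\KZ_c(U)$ for some $U \in \cO_c$; once this is established, the conclusion that $\KZ_c(M)\otimes_{\CC} N$ factors through $\cH_q$ is automatic, since everything in the image of $\KZ_c$ is tautologically an $\cH_q$-module. Set $V := \homf{\Delta_{c'}(\triv)}{M}$, a HC $H_c\text{-}H_{c'}$-bimodule. Using that $\KZ_{c'}$ induces an equivalence $\cO_{c'}/\cO_{c'}^{tor} \cong \cH_{q'}\text{-}\modd$, I would pick $L \in \cO_{c'}$ of full support with $\KZ_{c'}(L) \cong N$. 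Then by Proposition \ref{prop:HCproperties}(3), the tensor product $U := V \otimes_{H_{c'}} L$ lies in $\cO_c$, and so $\KZ_c(U)$ is an $\cH_q$-module.

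The core calculation happens after localization to $X = \hh^{reg}/W$. Since localization is exact and compatible with tensor products over $H_{c'}$,
\[
eU|_X \;\cong\; eV|_X \otimes_{\cD(X)} eL|_X.
\]
Lemma \ref{lemma:localization} embeds $eV|_X$ into $\Diff(\CC[X], eM|_X)$; the hypothesis $V \neq 0$ forces $eV|_X \neq 0$ (since $M$ has full support), and the ambient bimodule is simple by Proposition \ref{irredbimod}, so the inclusion is an equality. Invoking the $\cD(X)$-bimodule identification $\Diff(\CC[X], eM|_X) \cong eM|_X \otimes_{\CC[X]} \cD(X)$ from the preceding subsection, the display above collapses to the usual $\cO_X$-tensor product of left $\cD(X)$-modules $eM|_X \otimes_{\CC[X]} eL|_X$.

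To finish, I would apply the de Rham functor. As both $M$ and $L$ have full support, both $eM|_X$ and $eL|_X$ are local systems on $X$ with monodromy representations $\KZ_c(M)$ and $N$ respectively. It is standard that $DR$ sends the $\cO_X$-tensor product of local systems to the $\CC$-tensor product of their monodromies, equipped with the diagonal $\pi_1(X)$-action. This yields
\[
\KZ_c(U) \;\cong\; \KZ_c(M) \otimes_{\CC} N
\]
as $\pi_1(X)$-modules, and since the left-hand side factors through $\cH_q$, so does the right-hand side.

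The step I expect to require most care is the identification $eV|_X \cong eM|_X \otimes_{\CC[X]}\cD(X)$ as $\cD(X)$-bimodules: Lemma \ref{lemma:localization} gives only an inclusion of right modules, and one must combine it with the simplicity statement of Proposition \ref{irredbimod} and then check that the resulting isomorphism intertwines both the left and the right $\cD(X)$-actions, so that the tensor-product calculation over $\cD(X)$ actually reduces to the product of local systems. The choice of $L$ with full support is a minor point, since $\cO_{c'}^{tor}$ lies in the kernel of $\KZ_{c'}$ and one can always arrange this by taking, for instance, a projective cover in $\cO_{c'}$ of any preimage of $N$.
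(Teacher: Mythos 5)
Your argument is correct and follows the same route as the paper's: localize to $X$, identify $e\homf{\Delta_{c'}(\triv)}{M}|_X$ with $\Diff(\CC[X], eM|_X)\cong eM|_X\otimes_{\CC[X]}\cD(X)$ via Lemma \ref{lemma:localization} and Proposition \ref{irredbimod}, collapse the tensor over $\cD(X)$ to $eM|_X\otimes_{\CC[X]}eL|_X$, and apply the de Rham functor. The only cosmetic difference is that you lift $N$ to a specific full-support $L\in\cO_{c'}$, whereas the paper simply proves the identity $\KZ_{c'}(\homf{\Delta_{c'}(\triv)}{M}\otimes_{H_{c'}}N)=\KZ_c(M)\otimes_\CC\KZ_{c'}(N)$ for all $N\in\cO_{c'}$ and then passes to the quotient $\cH_{q'}$-$\modd$; also note that Lemma \ref{lemma:localization} is in fact stated as a sub-bimodule inclusion, so the bimodule compatibility you flag as the delicate point is already asserted there.
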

\begin{proof}
We show that for every $\widetilde{N} \in \cO_{c'}$:

$$
\KZ_{c}(\homf{\Delta_{c'}(\triv)}{M}\otimes_{H_{c'}} \widetilde{N}) = \KZ_{c}(M) \otimes_{\CC} \KZ_{c'}(\widetilde{N}).
$$

\noindent Since $\cH_{q'}\modd$ is a quotient of $\cO_{c'}$ via the KZ functor, this implies the result. Now, by results in the previous subsection the localization to $X$ of $\homf{\Delta_{c'}(\triv)}{M} \otimes_{H_{c'}} N$ is $\Diff(\CC[X], M_{X})\otimes_{\cD(X)} N_X$. Since $M_X$ is a local system, $\Diff(\CC[X], M_X) \cong M_X \otimes_{\CC[X]} \cD(X)$. Then,

$$
(\homf{\Delta_{c'}(\triv)}{M} \otimes_{H_{c'}} \widetilde{N})_{X} = M_X \otimes_{\CC[X]} \cD(X) \otimes_{\cD(X)} \widetilde{N}_X = M_X \otimes_{\CC[X]} \widetilde{N}_X.
$$

\noindent By \cite[Proposition 4.7.8]{HTT}, $DR(M_X \otimes_{\CC[X]} \widetilde{N}_X)$ is precisely $DR(M_X) \otimes_{\CC} DR(\widetilde{N}_X)$, with diagonal action of the braid group $\pi_1(X)$. The lemma is proved.
\end{proof}

\begin{rmk}\label{rmk:socleKZ}
Note that $\Delta_{c'}(\triv)$ has an irreducible socle. This follows because $\Delta_{c'}(\triv)$ is free as a $\CC[\hh]$-module (so that every submodule is torsion free) and the localization $\Delta_{c'}(\triv)_{\hh^{reg}}$ is an irreducible $\cD(\hh^{reg})\#W$-module. Moreover, for $S := \Soc(\Delta_{c'}(\triv))$, we get $\KZ(S) = \KZ(\Delta_{c'}(\triv))$. Then, Lemma \ref{lemma:mainKZ} holds, with the same proof, if we substitute $\Delta_{c'}(\triv)$ by $S$. We will mostly use this form of the lemma.
\end{rmk}


\section{HC bimodules with full support.}\label{sect:fullsupport}

In this section, we prove Theorem \ref{thm:main1}. In order to do so, we introduce the action of a product of symmetric group (the Namikawa-Weyl group) on the space of parameters $\fp$ for the Cherednik algebra. It turns out that it is more convenient to define this action under a reparametrization of $H_{c}$, and this is what we do first, Subsection \ref{subsect:reparam}. Then, in Subsections \ref{subsect:nami}, \ref{subsect:namiequiv} and \ref{subsect:namiaction} we study the action of the Namikawa-Weyl group on the set of parameters and, more precisely, its effect under passing from $\fp$ to the Hecke parameters \lq$ q$\rq, this will give us (1) of Theorem \ref{thm:main1}. Finally, in Subsection \ref{subsect:subgroup} we construct the group $W_{c}$, and in Subsection \ref{subsect:proof full support} we finish the proof of Theorem \ref{thm:main1}.

\subsection{A reparametrization of $H_{c}$}\label{subsect:reparam} In the sequel, a reparametrization of the rational Cherednik algebra $H_{c}$ by parameters which are more amenabe with the KZ functor will be convenient. Here we explain such a parametrization, which is standard, see e.g. Section 3 in \cite{GGOR} or Section 2 in \cite{gordon-losev}.

Recall that by $\cA$ we denote the set of reflection hyperplanes in $\hh$, and for each reflection hyperplane $\Gamma$ we denote by $W_{\Gamma}$ its stablizer, this is a cyclic group of order $\ell_{\Gamma}$. Let $s_{\Gamma} \in W_{\Gamma}$ be a generator with $\det(s_{\Gamma}|_{R}) = \exp(2\pi\sqrt{-1}/\ell_{\Gamma}) =: \eta_{\Gamma}$. We may and will assume that $\alpha_{s} = \alpha_{{s}_{\Gamma}}$, $\alpha_{s}^{\vee} = \alpha_{s_{\Gamma}}^{\vee}$ for every $s \in W_{\Gamma}\cap\cS$. We will denote these elements by $\alpha_{\Gamma} \in \hh^{*}, \alpha_{\Gamma}^{\vee} \in \hh$, respectively.

Now, for each $i = 0, \dots, \ell_{\Gamma} - 1$ we have the idempotent
\[
\be_{i, \Gamma} := \frac{1}{\ell_{\Gamma}}\sum_{j = 0}^{\ell_{\Gamma}-1}\eta_{\Gamma}^{-ij}s_{\Gamma}^{j} \in \CC W_{\Gamma}
\]

\noindent so that, for example, $\be_{0, \Gamma} \in \CC W_{\Gamma}$ is the trivial idempotent. For each reflection hyperplane $\Gamma \in \cA$, pick a collection of numbers $k_{\Gamma, 0}, \dots, k_{\Gamma, \ell_{\Gamma} -1}$ such that $k_{\Gamma, i} = k_{\Gamma', i}$ for every $i = 0, \dots, \ell_{\Gamma} - 1 = \ell_{\Gamma'}- 1$ if $\Gamma, \Gamma'$ are in the same $W$-orbit. Then define the algebra $H_{k}$ by generators and relations similar to (\ref{eqn:relations}), with the last relation replaced by

\begin{equation}\label{eqn:krelations}
[y, x] = \langle y, x \rangle - \frac{1}{2}\sum_{\Gamma \in \cA}\langle y, \alpha_{\Gamma}\rangle\langle\alpha_{\Gamma}^{\vee}, x \rangle \sum_{i = 0}^{\ell_{\Gamma} - 1}(k_{\Gamma, i} - k_{\Gamma, i-1})\be_{i, \Gamma}
\end{equation}

\noindent where $k_{\Gamma, -1} := k_{\Gamma, \ell_{\Gamma}-1}$. We remark that $H_{k} = H_{c}$, where the parameter $c: \cS \rightarrow \CC$ is recovered from $k$ as follows. For each reflection $s \in \cS$, let $\Gamma_{s}$ be the reflection hyperplane of $s$. Then, 

\begin{equation}\label{eqn:cfromk}
c(s) = \frac{1}{2\ell_{\Gamma_{s}}}\sum_{j = 0}^{\ell_{\Gamma_{s}}-1}(k_{\Gamma_{s}, j} - k_{\Gamma_{s}, j-1})\lambda_{s}^{-j}
\end{equation}

Note that the $k_{\Gamma, i}$ are only defined up to a common summand. Here, \emph{we will always assume that $k_{\Gamma, 0} = 0$}. In this case, we can recover the parameter $k$ from the parameter $c$ via (\ref{eqn:h}). We will still denote the set of $k$-parameters by $\fp$. In particular, we still have the notion of integral parameters, a parameter $k$ is integral if and only if $k_{\Gamma, i}/\ell_{\Gamma} \in \ZZ$ for every $\Gamma \in \cA$, $i = 0, \dots, \ell_{\Gamma} - 1$. Of course, Remark \ref{rmk:integral} is still valid and we will make extensive use of it without further comment. 

\subsection{The Namikawa-Weyl group}\label{subsect:nami} For $\Gamma \in \cA$, let us denote by $\fp(W_{\Gamma})$ the parameter space for the rational Cherednik algebra associated to the action of $W_{\Gamma}$ on the 1-dimensional space $\hh_{W_{\Gamma}}$. Note that $\fp(W_{\Gamma})$ is a $(\ell_{\Gamma} - 1)$-dimensional space, an element $k_{\Gamma} \in \fp(W_{\Gamma})$ is a tuple $k_{\Gamma} = (k_{\Gamma, 0} = 0, k_{\Gamma, 1}, \dots, k_{\Gamma, \ell_{\Gamma} -1})$. Moreover, we have

\[
\fp = \bigoplus_{\Gamma \in \cA/W} \fp(W_{\Gamma})
\]

\begin{definition}
	We define the \emph{Namikawa-Weyl group} of $W$ to be 
	
	\[
	\nam := \prod_{\Gamma \in \cA/W} \fS_{\ell_{\Gamma}}
	\]
	
	\noindent were $\fS_{\ell_{\Gamma}}$ denotes the symmetric group on $\ell_{\Gamma}$-elements.
\end{definition}

The Namikawa-Weyl group acts on the space $\fp$ as follows. An element $\sigma \in \fS_{\ell_{\Gamma}}$ acts on the space $\fp(W_{\Gamma})$ and leaves the elements in $\fp(W_{\Gamma'})$ fixed if $\Gamma'$ is not $W$-conjugate to $\Gamma$. Thus, to describe the action, we may assume that $W$ is of the form $W_{\Gamma}$ for some hyperplane $\Gamma \in \cA$, i.e., $W$ is a cyclic group on $\ell := \ell_{\Gamma}$ letters and $\nam = \fS_{\ell}$. Let us denote by $s_{1}, \dots, s_{\ell - 1}$ the simple reflections in $\fS_{\ell}$, that is, $s_{i} = (i, i+1)$. Now, for $k = (k_{0} = 0, \dots, k_{\ell - 1}) \in \fp$ we have

\[
(s_{1}k)_{j} = \begin{cases} 0, & j = 0 \\ 2 - k_{1}, & j = 1 \\ 1 + k_{j} - k_{1}, & j \neq 0, 1 \end{cases} \hspace{1cm} (s_{i}k)_{j} = \begin{cases} k_{j}, & j \neq i, i - 1 \\ k_{i} - 1, & j = i - 1 \\ k_{i-1} + 1, & j = i \end{cases}
\]

\noindent where, on the right-hand side, $i = 2, \dots, \ell - 1$. The first part of the following result is now an easy consequence of results of Losev, see \cite[Theorems 5.3.1 and 6.2.2]{losev_iso}, while the second part follows immediately from the definitions. 

\begin{lemma}
	Assume $W$ is a cyclic group. Then, for any $k \in \fp$ and $\sigma \in \nam$, the spherical subalgebras $eH_{k}e$ and $eH_{\sigma(k)}e$ are isomorphic as filtered algebras. Thus, all of the following categories are equivalent in a way that preserves the support filtration.
	
	\[
	\begin{array}{cccc}
	\HC(eH_{k}e, eH_{k}e), & \HC(eH_{k}e, eH_{\sigma(k)}e), & \HC(eH_{\sigma(k)}e, eH_{\sigma(k)}e), & \HC(eH_{\sigma(k)}e, eH_{k}e)
	\end{array}
	\]
\end{lemma}

Since passing to the spherical subalgebra does not kill nonzero Harish-Chandra bimodules with full support, we get the following result. Recall that we denote by $\overline{\HC}(k, k')$ the quotient category of all HC $H_{k}\text{-}H_{k'}$-bimodules modulo the full subcategory consisting of those bimodules whose support is properly contained in $(\hh \oplus \hh^{*})/W$. 

\begin{cor}\label{cor:namicyclic}
	Let $W$ be a cyclic group, and $k \in \fp$. Then, for $\sigma \in \nam$, all categories $\overline{\HC}(k, k)$, $\overline{\HC}(k, \sigma(k))$, $\overline{\HC}(\sigma(k), \sigma(k))$ and $\overline{\HC}(\sigma(k), k)$ are equivalent. 
\end{cor}

\begin{rmk}
	The Namikawa-Weyl group $\nam$ coincides with the Namikawa-Weyl group associated to the symplectic quotient singularity $(\hh \oplus \hh^{*})/W$, and its action on $\fp$ can be identified with its action on $H^{2}(X, \CC)$, where $X$ is a $\QQ$-factorial terminalization of $(\hh \oplus \hh^{*})/W$, see e.g. \cite{BST}.
\end{rmk}

\subsection{Equivalences from the Namikawa-Weyl group}\label{subsect:namiequiv} In this section, we show that the conclusion of Corollary \ref{cor:namicyclic} is valid for \emph{any} complex reflection group, not just cyclic groups. The following lemma, together with Corollary \ref{cor:semisimple} will be key to the results of this section.

\begin{lemma}\label{lemma:semisimple}
Assume that the category $\overline{\HC}(k,k')$ is nonzero. Then, there exists a 1-dimensional character $\tau$ of $W$ such that $\homf{S}{\Delta_{k}(\tau)} \neq 0$ where, recall $S \in \cO_{k'}$ is the irreducible module that gets sent to the trivial representation under the $\KZ$ functor.
\end{lemma}
\begin{proof}
Assume that $\overline{\HC}(k,k') \neq 0$. Then, by Proposition \ref{prop:subbimod} there exists an irreducible module $N \in \cO_{k}$ with full support such that $\homf{S}{N} \neq 0$. Now, for each reflection hyperplane $\Gamma \in \cA$, consider the pointwise stabilizer $W_{\Gamma} \subseteq W$. This is a cyclic group. Note that, since $\homf{S}{N} \neq 0$, we have that $\homf{\Res^{W}_{W_{\Gamma}}(S)}{\Res^{W}_{W_{\Gamma}}(N)} \neq 0$, this follows from Lemma \ref{lemma:compatibility of res}. Since $\KZ$ commutes with restriction, we have that $S_{W_{\Gamma}}$ is the unique subquotient of $\Res^{W}_{W_{\Gamma}}(S)$ will full support, which implies that $\homf{S_{W_{\Gamma}}}{\Res^{W}_{W_{\Gamma}}(N)} \neq 0$. Now, in category $\cO$ for the rational Cherednik algebra of $W_{\Gamma}$, for every irreducible representation (= 1-dimensional character) $\tau$ of $W_{\Gamma}$, we have that either $L_{k}(\tau) = \Delta_{k}(\tau)$, or $L_{k}(\tau)$ is finite dimensional. Since $\Res^{W}_{W_{\Gamma}}(N)$ has full support, we conclude that there exists an irreducible representation $\tau_{\Gamma}$ of $W_{\Gamma}$ with $\homf{S_{W_{\Gamma}}}{\Delta_{c}(\tau_{\Gamma})} \neq 0$. We remark that we can take $\tau_{\Gamma} = \tau_{\Gamma'}$ if $\Gamma, \Gamma' \in \cA$ are conjugate, this follows from the conjugation invariance of $k$. 

Now recall that we have an isomorphism $\Hom(W, \CC^{\times}) \longrightarrow (\prod_{\Gamma \in \cA}\Hom(W_{\Gamma}, \CC^{\times}))/W$ that is given by restriction, cf. \cite[3.3.1]{rouquier}. So the $W$-equivariant choice of characters $\{\tau_{\Gamma} : \Gamma \in \cA\}$ determines a 1-dimensional character $\tau$ of $W$. We claim that $\homf{S}{\Delta_{k}(\tau)} \neq 0$. To see this, we will use Theorem \ref{thm:veryverytechnical}. Assume for the moment that for every reflection hyperplane $\Gamma \in \cA$, $\homf{S_{W_{\Gamma}}}{\Res^{W}_{W_{\Gamma}}(\Delta_{k}(\tau))} \neq 0$. Then, $\homf{S_{W_{\Gamma}}}{\Res^{W}_{W_{\Gamma}}(\Delta_{k}(\tau))}_{\dagger_{\{1\}}^{W_\Gamma}}$ is a nonzero subbimodule of the 1-dimensional bimodule $\Hom_{\CC}(\CC, \KZ_{c}(\Delta_{k}(\tau)))$, so the conditions of Theorem \ref{thm:veryverytechnical} are satisfied. Using this theorem we get a HC $H_{k}\text{-}H_{k'}$-bimodule $B$ that localizes to $\Hom_{\CC}(\KZ_{k'}(S), \KZ_{k}(\Delta_{c}(\tau)))$. Using Lemma \ref{lemma:mainKZ} it is easy to see that, up to subquotients with proper support, $B = \homf{S_{W'}}{\Delta_{k}(\tau)}$. So what we need to show now is that the space of locally finite maps $\homf{S_{W_{\Gamma}}}{\Res^{W}_{W_{\Gamma}}\Delta_{k}(\tau)}$ is nonzero for every $\Gamma \in \cA$. We proceed to do this.

The $H_{k}(W_{\Gamma})$-module $\Res^{W}_{W_{\Gamma}}(\Delta_{k}(\tau))$ has a standard filtration, see e.g. \cite[Proposition 1.9]{shan}. Since, by construction, the restriction of the representation $\tau$ to $W_{\Gamma}$ is $\tau_{\Gamma}$, Proposition 3.14(ii) in \cite{BE}, allows us to conclude that  $\Res^{W}_{W_{\Gamma}}(\Delta_{k}(\tau)) = \Delta_{k}(\tau_{\Gamma})$. So $\homf{S_{W_{\Gamma}}}{\Res^{W}_{W_{\Gamma}}\Delta_{k}(\tau)} \neq 0$. We are done. 
\end{proof}

\begin{cor}\label{cor:semisimple}
Assume that $\overline{\HC}(k,k') \neq 0$. Then, the categories $\overline{\HC}(k,k')$ and $\overline{\HC}(k',k')$ are equivalent. Moreover, they are equivalent to the category of representations of $W/N$ for some normal subgroup $N$ of $W$.
\end{cor}
\begin{proof}
Let $B$ be a HC $H_{k}\text{-}H_{k'}$-bimodule with full support. By the previous lemma, we may asumme that $B = \homf{S}{\Delta_{k'}(\tau)}$ for a 1-dimensional character $\tau$ of $W$, so that $eB[\delta^{-1}]e = \Diff(\CC[X], M)$. Here, $M := e\Delta_{k'}(\tau)[\delta^{-1}]$ is a rank 1 local system. Then, the tensor product functor $eB[\delta^{-1}]e \otimes_{\cD(X)} \bullet$ induces a self-equivalence in the category of $\cD(X)$-bimodules. Indeed, this follows because $eB[\delta^{-1}]e = M \otimes_{\CC[X]}\cD(X)$ and $M$ is a line bundle on $X$. This implies that $B \otimes_{H_{k'}} \bullet: \HC(k',k') \rightarrow \HC(k,k')$ induces an equivalence between $\overline{\HC}(k',k')$ and $\overline{\HC}(k,k')$. The last assertion was checked in Subsection \ref{subsection:induction}.
\end{proof}

The following is the main result of this subsection. It generalizes Corollary \ref{cor:namicyclic}, and it gives one direction of Theorem \ref{thm:main1}(1). 

\begin{lemma}\label{lemma:naminonzero}
	Assume that either
	\begin{enumerate}
		\item There exists $\sigma \in \nam$ such that $k' = \sigma(k)$, or
		\item There exists $k'' \in \fp_{\ZZ}$ such that $k' = k + k''$.
	\end{enumerate}
Then, the category $\overline{\HC}(k, k')$ is nonzero, and so all categories $\overline{\HC}(k, k)$, $\overline{\HC}(k, k')$, $\overline{\HC}(k', k')$ and $\overline{\HC}(k', k)$ are equivalent.
\end{lemma}
\begin{proof}
	Let us show (1). We use Theorem \ref{thm:veryverytechnical} with $\underline{W} = \{1\}$, so $\underline{W}$ sits inside $W'$ in corank 1 if and only if $W' = W_{\Gamma}$ for some reflection hyperplane $\Gamma$.  By Corollary \ref{cor:namicyclic}, the category $\overline{\HC}(H_{k}(W_{\Gamma}, R_{W_{\Gamma}}), H_{\sigma(k)}(W_{\Gamma}, R_{W_{\Gamma}}))$ is nonzero. So we can find an irreducible (= 1-dimensional) representation $\tau_{\Gamma}$ of $W_{\Gamma}$ such that $\homf{S_{\sigma(k)}(W_{\Gamma})}{\Delta_{k}(\tau_{\Gamma})}$ is nonzero, where $S_{\sigma(k)}(W_{\Gamma})$ denotes the socle of the polynomial representation $\Delta_{\sigma(k)}(\triv_{W_{\Gamma}})$. Note that, by the $W$-invariance of the parameter $k$, we may assume that $\tau_{\Gamma} = \tau_{\Gamma'}$ if $\Gamma, \Gamma'$ are in the same $W$-orbit. Now proceed as in the proof of Lemma \ref{lemma:semisimple}.
	
	Let us now proceed to (2). First, assume that $k'' = \overline{\chi}$ for some 1-dimensional character $\chi: W \rightarrow \CC^{\times}$, cf. Subsection \ref{subsect:lattice}. Recall that we have an isomorphism $eH_{k}e \cong e_{\chi}H_{k'}e_{\chi}$, and so $e_{\chi}H_{k'}e$ becomes a $eH_{k}e\text{-}eH_{k'}e$-bimodule, and we have the $H_{k}\text{-}H_{k'}$-bimodule $B_{k, k'} := H_{k}e\otimes_{eH_{k}e}e_{\chi}H_{k'}e \otimes_{eH_{k'}e} eH_{k'}$. According to \cite[Lemma 3.2]{losev_derived} it is HC and it has full support. Thus, $\overline{\HC}(k, k') \neq 0$. The general result now follows from the fact that the parameters of the form $\overline{\chi}$ form a basis of $\fp_{\ZZ}$, and Lemma \ref{lemma:brown}, which ensures that tensor products of the form $B_{k, k'} \otimes_{H_{k'}} B_{k', k''}$ are nonzero.
\end{proof}

\subsection{Action on the set of Hecke parameters}\label{subsect:namiaction} In this section, we show (1) of Theorem \ref{thm:main1}. The strategy will be to use Lemma \ref{lemma:mainKZ} together with a study on how the action of the Namikawa-Weyl group behaves under passing to the Hecke parameter $q = q(k)$. Since both the action of the Namikawa-Weyl group and the computation of the Hecke parameter $q(k)$ are done by restricting to stabilizers of reflection hyperplanes, it is enough to do this in the case when $W$ is a cyclic group. 

Let us denote by $\fS_{\{2, \dots, \ell\}}$ the symmetric group on the symbols $\{2, \dots, \ell\}$. Of course, $\fS_{\{2, \dots, \ell\}}$ is isomorphic to $\fS_{\ell - 1}$, an isomorphism $\fS_{\{2, \dots, \ell\}} \rightarrow \fS_{\ell - 1}$ is given by $\sigma \mapsto \widetilde{\sigma}$, $\widetilde{\sigma}(i) = \sigma(i + 1) - 1$. The following result can be easily checked by a direct computation.

\begin{lemma}\label{lemma:namiaction}
	Let $W = \ZZ/\ell\ZZ$, so that $\nam = \fS_{\ell}$. Let $k \in \fp$, and $q(k) = \{q(k)_{0} = 1, q(k)_{1}, \dots, q(k)_{\ell - 1}\}$ be the Hecke parameter associated to $k$ via (\ref{eqn:q}). For $\sigma \in \fS_{\ell}$, the parameter $q(\sigma(k))$ is determined by the following.
	
	\begin{itemize}
		\item $q(\sigma(k))_{0} = 1$.
		\item $q(\sigma(k))_{i} = q(k)_{\widetilde{\sigma}(i)}$, if $\sigma \in \fS_{\{2, \dots, \ell\}} \subseteq \fS_{\ell}$, $i = 1, \dots, \ell - 1$. 
		\item $q(\sigma(k))_{i} = q(k)_{1}^{-1}q(k)_{i}$, if $\sigma = (12)$, $i = 1, \dots, \ell - 1$.  
	\end{itemize} 
\end{lemma}

Let us remark that the group of characters $\Hom(W, \CC^{\times})$ acts on $\fp$ in such a way that we have an isomorphism $H_{k} \cong H_{\chi(k)}$ for $\chi \in \Hom(W, \CC^{\times})$ and $k \in \fp$. Moreover, this isomorphism preserves the subalgebras $\CC[\hh]^{W}, \CC[\hh^{*}]^{W}$ and therefore it also preserves categories of HC bimodules. In terms of the parameter $c$, the action is simply given by $(\chi c)(s) = \chi(s)c(s)$. Let us describe this action in terms of the parameter $k$. Recall that we have $\Hom(W, \CC^{\times}) \cong \prod_{\Gamma \in \cA/W}\Hom(W_{\Gamma}, \CC^{\times})$ and $\fp = \bigoplus_{\Gamma \in \cA/W}\fp(W_{\Gamma})$, so it is enough to describe this action when $W$ is a cyclic group, say $W = \ZZ/\ell\ZZ = \langle s : s^{\ell} = 1\rangle$. In this case, the group of characters is identified with $\ZZ/\ell\ZZ$, $j \mapsto (s \mapsto \eta^{j})$, $\eta := \exp(2\pi\sqrt{-1}/\ell)$. We denote the character $s \mapsto \eta^{j}$ by $\chi_{j}$. Then, we have for $k = (k_{0} = 0, k_{1}, \dots, k_{\ell - 1})$, $\chi_{j}(k)_{i} = k_{i - j} - k_{\ell - j}$, where the subscripts are taken modulo $\ell$. An isomorphism $H_{k} \rightarrow H_{\chi(k)}$ is given by $x \mapsto x, y \mapsto y, w \mapsto \chi(w)w$, $x \in \hh^{*}$, $y \in \hh$, $w \in W$.

\begin{lemma}\label{lemma:namintegral}
	Let $\chi: W \rightarrow \CC^{\times}$ be a 1-dimensional character, and let $k$ be a parameter. Then, there exist $\sigma \in \nam$ and $k' \in \fp_{\ZZ}$ such that $\chi(k) = \sigma(k) + k'$.
\end{lemma}
\begin{proof}
	It is again enough to show this when $W = \ZZ/\ell\ZZ$ is a symmetric group. Assume $\chi = \chi_{j}$ for some $j = 0, \dots \ell - 1$. A direct computation shows that we have $q(\chi_{j}(k))_{i} =  q(k)_{i - j}q(k)_{\ell - j}^{-1}$.
	Thanks to Lemma \ref{lemma:namiaction}, we can find an element $\sigma \in \nam$ such that $q(\varepsilon_{j}(k))_{i} = q(\sigma(k))_{i}$ for every $i = 0, \dots, \ell - 1$. This means that $\varepsilon_{j}(k) - \sigma(k) \in \fp_{\ZZ}$. We are done.
\end{proof}

We are now in a position to prove (1) of Theorem \ref{thm:main1}.

\begin{prop}\label{prop:thm1.1part1}
	Let $k, k' \in \fp$. The following are equivalent.
	\begin{enumerate}
		\item There exists $\sigma \in \nam$ such that $\sigma(k) - k' \in \fp_{\ZZ}$.
		\item The category $\overline{\HC}(k, k')$ is nonzero.
	\end{enumerate}
\end{prop}
\begin{proof}
(1) $\Rightarrow$ (2). Thanks to Lemma \ref{lemma:naminonzero}, we may find HC bimodules with full support $B_{1} \in \HC(k, \sigma(k))$, $B_{2} \in \HC(\sigma(k), k')$. Thanks to Lemma \ref{lemma:brown}, the bimodule $B_{1} \otimes_{H_{\sigma(k)}} B_{2}$ is nonzero and has full support. Thus, $\overline{\HC}(k, k')$ is nonzero. 

(2) $\Rightarrow$ (1). Thanks to Lemma \ref{lemma:semisimple}, we may find a 1-dimensional character $\tau$ such that the bimodule $\homf{\Delta_{k'}(\triv)}{\Delta_{k}(\tau)}$ is nonzero.  Under the equivalence $\varphi_{*}: \cO_{k} \rightarrow \cO_{\tau^{-1}(k)}$, coming from the isomorphism $\varphi: H_{k} \rightarrow H_{\tau^{-1}(k)}$ we have $\varphi_{*}(\Delta_{k}(\tau)) = \Delta_{\varepsilon^{-1}(k)}(\triv)$. Thus, this implication is a consequence of Lemma \ref{lemma:namintegral} and the following result.
\end{proof}

\begin{lemma}
Assume $\homf{\Delta_{k'}(\triv)}{\Delta_{k}(\triv)} \neq 0$. Then, there exists $\sigma \in \nam$ such that $\sigma(k') - k \in \fp_{\ZZ}$.
\end{lemma}
\begin{proof}
First of all, note that a parameter $k$ is integral if and only if $k|_{W_{\Gamma}} \in \fp_{\ZZ}(W_{\Gamma})$ for every reflection hyperplane $\Gamma \in \cA$. Also, since $\Res^{W}_{W_{\Gamma}}(\Delta_{?}(\triv)) = \Delta_{?}(\triv(W_{\Gamma}))$ where $? = k, k'$, cf. the proof of Lemma \ref{lemma:semisimple}, we have that $\homf{\Delta_{k'}(\triv(W_{\Gamma}))}{\Delta_{k}(W_{\Gamma})} \neq 0$ for every $\Gamma \in \cA$. Finally, since the action of the Namikawa-Weyl group is defined by restricting to stabilizers of reflection hyperplanes, we may assume that $W$ is a cyclic group $\ZZ/\ell\ZZ$.

So assume $\homf{\Delta_{k'}(\triv)}{\Delta_{k}(\triv)} \neq 0$ and $W$ is a cyclic group. Since $\KZ_{k}(\Delta_{k}(\triv)) = \CC$, the trivial representation of $\cH_{q(k)}$, Lemma \ref{lemma:mainKZ} implies that $\cH_{q(k)'}\modd \subseteq \cH_{q(k)}\modd$ as full subcategories of $\CC[t, t^{-1}]\modd$. But $\cH_{q(k)}, \cH_{q(k)'}$ are commutative algebras of the same dimension. This implies that $\cH_{q(k)} = \cH_{q(k')}$, in other words, the numbers $q(k)_{0} = 1, q(k)_{1}, \dots, q(k)_{\ell-1}$ and $q(k')_{0} = 1, q(k')_{1}, \dots, q(k')_{\ell - 1}$ coincide up to a permutation of the indices that fixes $0$. Now the lemma is an immediate consequence of Lemma \ref{lemma:namiaction} and the definition of having integral difference. 
\end{proof}

Note that an easy consequence of Proposition \ref{prop:thm1.1part1} is that the category $\overline{\HC}(k, k')$ is nonzero if and only if the same holds for the category $\overline{\HC}(k', k)$. Similarly to the proof of Corollary \ref{cor:semisimple}, we have the following result.

\begin{cor}\label{cor:allequiv}
	Assume that the category $\overline{\HC}(k, k')$ is nonzero. Then, all categories $\overline{\HC}(k, k)$, $\overline{\HC}(k, k')$, $\overline{\HC}(k', k)$ and $\overline{\HC}(k', k')$ are equivalent. Moreover, the categories $\overline{\HC}(k, k)$ and $\overline{\HC}(k', k')$ are equivalent as monoidal categories.
\end{cor}

\subsection{Subgroup $W_{c}$}\label{subsect:subgroup} For the rest of this section, it will be more convenient to return to the \lq$c$-parametrization\rq of the rational Cherednik algebra. Of course, we still have an action of the Namikawa-Weyl group $\nam$, and every result we have proved in Subsections \ref{subsect:nami}-\ref{subsect:namiaction} remains valid. 

 Recall that the category $\overline{\HC}(c,c)$ is equivalent to the category of representations of $W/N$ for some normal subgroup $N \subseteq W$. Here, we describe the group $N$. To motivate our description, we first look at the case where $W$ is a cyclic group. 

So assume $W = \ZZ/\ell\ZZ$, with generator $s$. The Hecke algebra $\cH_{q}$ is the quotient of the polynomial algebra $\CC[T]$ by the ideal generated by the polynomial $(T - 1)\prod_{i = 1}^{\ell - 1}(T - q_{i})$. We remark that $q_{i}$ is the scalar by which $T$ acts on $\KZ(\CC_{i})$, where $\CC_{i}$ is the irreducible representation of $W$ where $s$ acts by multiplication by $\exp(2\pi\sqrt{-1}i/\ell)$. Now, if $\homf{\Delta(\triv)}{\Delta(\CC_{i})}$ is nonzero then, thanks to Lemma \ref{lemma:mainKZ}, multiplication by $q_{i}$ induces a map $q \rightarrow q$, where $q$ denotes the multiset $q = \{q_{0} = 1, q_{1}, \dots, q_{\ell - 1}\}$. It is not hard to see that this map is actually a bijection, i.e. it preserves multiplicities. In particular, $q_{i}$ is an $\ell$-root of 1.

Set $\eta := \exp(2\pi\sqrt{-1}/\ell)$. Note that the group $W$ acts on the set of Hecke parameters, the element $s^{i}$ acts on a multiset $q' = \{q'_{0}, \dots, q'_{\ell -1}\}$ by multiplying each element by $\eta^{i}$. The stabilizer of $q$, the Hecke parameter associated to the Cherednik parameter $c$, is cyclic, so it is generated by $s^{m}$, where $m$ divides $\ell$, say $mp = \ell$. By definition, $W_{c} := \langle s^{p}\rangle$. Note that for generic $c$ we have that $m = \ell$ or, equivalently, $W_{c} = W$.

Let us generalize the definition of $W_{c}$ for the case where $W$ is any complex reflection group. Fix a reflection hyperplane $\Gamma \in \cA$. Let $\eta_{\Gamma} := \exp(2\pi\sqrt{-1}/\ell_{\Gamma})$. Now consider the set $X_{\Gamma} := \{i \in \{1, \dots, \ell_{\Gamma}\} : \eta_{\Gamma}^{i}q_{\Gamma, j} \in \{q_{\Gamma, 0} = 1, \dots, q_{\Gamma, \ell_{\Gamma} - 1}\} \; \text{with the same multiplicity as} \; q_{\Gamma, j} \; \text{for every} \; j = 0, \dots, \ell_{\Gamma} - 1\}$. For example, $\ell_{\Gamma} \in X_{\Gamma}$. Now set $m_{\Gamma} := \min X_{\Gamma}$. It is clear that $m_{\Gamma}$ is a divisor of $\ell_{\Gamma}$, say $m_{\Gamma}p_{\Gamma} = \ell_{\Gamma}$. We define $W_{c} := \langle s_{\Gamma}^{p_{\Gamma}} : \Gamma \in \cA \rangle \subseteq W$. By definition, this is a reflection group. Note that the conjugation invariance of $c$ implies that $W_{c}$ is a normal subgroup of $W$. 

Note that $W_{c} = \{1\}$ if and only if $m_{\Gamma} = 1$ for every reflection hyperplane $\Gamma \in \cA$. This happens if and only if $\{q_{\Gamma, 0} = 1, \dots, q_{\Gamma, \ell_{\Gamma} - 1}\} = \{1, \eta_{\Gamma}, \dots, \eta_{\Gamma}^{\ell_{\Gamma} - 1}\}$, that is, if and only if $c \in \nam(\fp_{\ZZ})$. On the other hand $W_{c} = W$ if and only if $m_{\Gamma} = \ell_{\Gamma}$ for every $\Gamma \in \cA$, and this is a generic condition. It is also clear that $W_{c} = W_{c'}$ provided $c - c' \in \fp_{\ZZ}$.

\begin{lemma}\label{lemma:namistable}
	Let $c \in \fp$ and let $\sigma \in \nam$. Then, $W_{c} = W_{\sigma(c)}$.
\end{lemma}
\begin{proof}
It is enough to check this when $W$ is a cyclic group, so let $W = \ZZ/\ell\ZZ = \langle s: s^{\ell} = 1\rangle$, and $\eta := \exp(2\pi\sqrt{-1}/\ell)$. Assume that $W_{c} = \langle s^{p}\rangle$ with $mp = \ell$. This means that there exist $Q_{0} = 1, Q_{1}, \dots, Q_{m-1} \in \CC^{\times}$ such that
\[
q(c) = \{Q_{j}\eta^{mi} : j = 0, \dots, m-1, i = 0, \dots, p-1\} 
\]

It is enough to check that $W_{c}= W_{\sigma_{i}(c)}$ for $i = 1, \dots, \ell - 1$, where, recall, $\sigma_{i} = (i, i+1) \in S_{\ell} = \nam(W)$. Thanks to Lemma \ref{lemma:namiaction}, $q(c) = q(\sigma_{i}(c))$ as a multi-set for $i = 2, \dots, \ell - 1$, so that in this case we have $W_{c} = W_{\sigma_{i}(c)}$. For $\sigma_{1}$, Lemma \ref{lemma:namiaction} implies that there exist $j_{0} \in \{0, \dots, m-1\}, i_{0} \in \{0, \dots, p-1\}$ such that
\[
q(\sigma_{1}(c)) = \{Q_{j_{0}}^{-1}Q_{j}\eta^{m(i - i_{0}) : j = 0, \dots, m-1, i = 0, \dots, p-1}\}
\]

Setting $Q'_{j} := Q_{j_{0}}^{-1}Q_{j}$, we get that $q(\sigma_{1}(c)) = \{Q'_{j}\eta^{mi} : j = 0, \dots, m-1, i = 0, \dots, p-1\}$, so $W_{\sigma_{1}(c)} = \langle s^{p'}\rangle$ with $p'$ dividing $p$. But since $c = \sigma_{1}\sigma_{1}(c)$, we also have that $p$ divides $p'$. Since $p, p' \in \{1, \dots, \ell\}$, this implies that $W_{c} = \langle s^{p}\rangle = W_{\sigma_{1}(c)}$. 
\end{proof}

\begin{theorem}\label{thm:main full support}
The category $\overline{\HC}(c,c)$ is equivalent to the category of representations of $W/W_{c}$.
\end{theorem} 

To prove Theorem \ref{thm:main full support}, we will check that in this case there exist an element $\sigma \in \nam$ and a parameter $c' \in \fp_{\ZZ} + \sigma(c)$ such that the algebra $H_{c'}(W)$ decomposes as $W \#_{W_{c}}H_{\underline{c}}(W_{c})$, for some parameter $\underline{c} \in \CC[\cS\cap W_{c}]^{W_{c}}$ which is naturally computed from $c'$. Since $\sigma(c) - c' \in \fp_{\ZZ}$, the categories $\overline{\HC}(c,c)$ and $\overline{\HC}(c',c')$ are equivalent, cf. Lemma \ref{lemma:naminonzero} and Corollary \ref{cor:semisimple}. The result will now follow if we check that $H_{\underline{c}}(W_{c})$ has a unique irreducible HC bimodule with full support. 

\subsection{Proof of Theorem \ref{thm:main full support}}\label{subsect:proof full support} We continue to use the notation introduced in Subsection \ref{subsect:subgroup}. 

Assume, for the moment, that the parameter $c$ is such that, for $\Gamma \in \cA$, $c(s_{\Gamma}^{i}) = 0$ unless $i = p_{\Gamma}, 2p_{\Gamma}, \dots, (m_{\Gamma} -1)p_{\Gamma}$. Then, it is clear from the relations (\ref{eqn:relations}) that the $H_{c}$-subalgebra generated by $\hh, \hh^{*}$ and $W_{c}$ is isomorphic to $H_{\underline{c}}(W_{c})$, where $\underline{c}$ simply denotes the restriction of the parameter $c$ to $W_{c}$. So $H_{c}$ is generated by $H_{\underline{c}}(W_{c})$ and $W$. Moreover, the subalgebra $H_{\underline{c}}(W_{c})$ is stable under the adjoint action of $W$. It follows that $H_{c} \cong H_{\underline{c}}(W_{c})\#_{W_{c}} W$, where the latter algebra is $H_{\underline{c}}(W_{c}) \otimes_{W_{c}} \CC W$ with product defined analogously to the smash-product algebra, using the action of $W$ on $H_{\underline{c}}(W_{c})$. Thus, HC $H_{c}$-bimodules with full support correspond to $W$-equivariant HC $H_{\underline{c}}(W_{c})$-bimodules with full support, where the action of $W_{c} \subseteq W$ coincides with that coming from the inclusion $W_{c} \subseteq H_{\underline{c}}(W_{c})$. 

Let us now examine the Hecke parameters $q_{\Gamma, i}$, still under the assumption that $c(s_{\Gamma}^{i}) = 0$ unless $i = p_{\Gamma}, 2p_{\Gamma}, \dots, (m_{\Gamma} - 1)p_{\Gamma}$. It follows easily from (\ref{eqn:h}) that $k_{\Gamma, i} = k_{\Gamma, i + m_{\Gamma}}$ for all $i$. But then it follows that:

$$
q_{\Gamma, i + m_{\Gamma}} = \exp(2\pi\sqrt{-1}(k_{\Gamma, i} - i - m_{\Gamma})/\ell_{\Gamma}) = \eta_{\Gamma}^{-m_{\Gamma}}q_{\Gamma, i}
$$

Note that, given numbers $Q_{\Gamma, 0} = 1, Q_{\Gamma, 1}, \dots, Q_{\Gamma, m_{\Gamma - 1}} \in \CC^{\times}$ we can always find a parameter $c \in \CC[\cS]^{W}$ with $c(s_{\Gamma}^{i}) = 0$ unless $i$ is a multiple of $p_{\Gamma}$ and such that $q_{\Gamma, i} = Q_{\Gamma, i}$. This implies the following.

\begin{lemma}\label{lemma:reduction of c}
Let $c \in \CC[\cS]^{W}$ be a parameter of the form considered in Subsection \ref{subsect:subgroup}. Then, there exists a parameter $c' \in \CC[\cS]^{W}$ such that for every $\Gamma \in \cA$, $c'(s_{\Gamma}^{i}) = 0$ unless $i = p_{\Gamma}, 2p_{\Gamma}, \dots, (m_{\Gamma} - 1)p_{\Gamma}$ and $\cH_{q(c)} = \cH_{q(c')}$, that is, there exists $\sigma \in \nam$ such that  $\sigma(c) - c' \in \fp_{\ZZ}$, and so the categories $\overline{\HC}(c,c)$ and $\overline{\HC}(c',c')$ are equivalent.
\end{lemma} 

\begin{prooffull}
Thanks to Lemma \ref{lemma:reduction of c} we may assume that $c(s_{\Gamma}^{i}) = 0$ unless $i$ is a multiple of $p_{\Gamma}$, i.e. that $H_{c} \cong H_{\underline{c}}(W_{c})\#_{W_{c}} W$. We claim now that $H_{\underline{c}}(W_{c})$ has a unique irreducible HC bimodule with full support. For $\Gamma \in \cA$, let $\underline{q}_{\Gamma, 0} = 1, \dots, \underline{q}_{\Gamma, m_{\Gamma} - 1}$ be the parameters for the Hecke algebra $\cH_{\underline{q}}(W_{c})$ associated to $\underline{c}$, and denote $\underline{\eta}_{\Gamma} := \exp(2\pi\sqrt{-1}/m_{\Gamma}) = \eta_{\Gamma}^{p_{\Gamma}}$. We also denote $m'_{\Gamma} := \min\{i \in \{1, \dots, m_{\Gamma}\} : \underline{\eta}_{\Gamma}^{i}\underline{q}_{\Gamma, j} \in \underline{q}_{\Gamma} \; \text{with the same multiplicity as} \; \underline{q}_{\Gamma, j}$  $\; \text{for every} \; j = 0, \dots, m_{\Gamma} - 1\}$. Thanks to Lemma \ref{lemma:mainKZ}, our claim will follow if we check the following.\\

{\it Claim: For every hyperplane $\Gamma \in \cA$, $m'_{\Gamma} = m_{\Gamma}$.}\\

We proceed by contradiction. Assume there exists $0 < i < m_{\Gamma}$ such that, for every $j = 0, \dots, m_{\Gamma} - 1$, $\underline{\eta}^{i}_{\Gamma}\underline{q}_{\Gamma, j}$ is in the multiset $\underline{q}_{\Gamma}$ with the same multiplicity as $\underline{q}_{\Gamma, j}$. Note that we have 

$$
\underline{q}_{\Gamma, j} = \exp\left(\frac{2\pi\sqrt{-1}(k_{i} - i)}{m_{\Gamma}}\right) = q_{\Gamma, j}^{p_{\Gamma}}
$$

Thus, $\underline{\eta}_{\Gamma}^{i}\underline{q}_{\Gamma, j} \in \underline{q}_{\Gamma}$ implies that $\eta_{\Gamma}^{i}q_{\Gamma, j} \in \{q_{\Gamma, 0} = 1, \dots, q_{\Gamma, m_{\Gamma - 1}}\}$, with the same multiplicity as $q_{\Gamma, j}$. But $q_{\Gamma} = \{q_{\Gamma, 0}, \dots, q_{\Gamma, m_{\Gamma -1}}, \eta^{m_{\Gamma}}q_{\Gamma, 0}, \dots, \eta^{m_{\Gamma}}q_{\Gamma, m_{\Gamma} - 1}, \dots, \eta^{(p_{\Gamma} - 1)m_{\Gamma}}q_{\Gamma, 0}, \dots, \eta^{(p_{\Gamma} - 1)m_{\Gamma}}q_{\Gamma, m_{\Gamma} - 1}\}$. Thus, we see that $\eta_{\Gamma}^{i}q_{\Gamma, j} \in q$ with the same multiplicity as $q_{\Gamma, j}$ for every $j = 0, \dots, \ell_{\Gamma} - 1$. This contradicts the choice of $m_{\Gamma}$. Thus, $H_{\underline{c}}(W_{c})$ has a unique irreducible HC bimodule with full support. Since $H_{c} = H_{\underline{c}}(W_{c}) \#_{W_{c}} W$, this proves Theorem \ref{thm:main full support}.
\end{prooffull}\\

Note that Theorem \ref{thm:main1} follows immediately from Proposition \ref{prop:thm1.1part1}, Corollary \ref{cor:allequiv} and Theorem \ref{thm:main full support}.

\section{Type A.}\label{sect:A}
\subsection{Preliminary results.}\label{subsection:prelimtypeA} We now turn our attention to type A, that is, $W = \fS_n$, with reflection representation $\hh = \{(x_1, \dots, x_n) \in \CC^{n} : \sum x_i = 0\}$. Throughout this section, we denote $H_{c}(n) := H_{c}(\fS_n)$. Similary, we denote $\cH_{q}(n) := \cH_{q}(\fS_{n})$, the Hecke algebra associated to $\fS_n$ with parameter $q \in \CC^{\times}$. In this subsection, we gather some results on the structure of the algebra $H_c$ and category $\cO_{c}$. 

It is known, cf. \cite[Example 3.25]{BE}, \cite[Theorem 5.8.1]{losev_completions}, that the algebra $H_{c}:= H_{c}(n)$ is simple unless $c = r/m$ with $r, m \in \ZZ$, $\gcd(r;m) = 1$ and $1 < m \leq n$. In this case, \cite[Theorem 5.8.1 (2)]{losev_completions}, the algebra $H_{c}$ has $\lfloor n/m\rfloor$ proper nonzero two-sided ideals that are	linearly ordered by inclusion, say $\cJ_{1} \subset \cJ_{2} \subset \cdots \subset \cJ_{\lfloor n/m\rfloor}$. Moreover, $\cJ_{i}^{2} = \cJ_{i}$ for any $i = 1, \dots, \lfloor n/m\rfloor $. We set $\cJ_0 := \{0\}$, $\cJ_{\lfloor n/m\rfloor +1} := H_{c}$.

The classification of two-sided ideals gives a characterization of the possible supports of HC bimodules. For $i = 1, \dots, \lfloor n/m\rfloor$ consider the subgroup $\fS_{m}^{\times i} \subseteq \fS_n$, and consider the set $\cX_{i} := \{x \in \hh\oplus\hh^{*} : W_{x} = \fS_{m}^{\times i}\}$. Let $\cL_{i}$ be the image of $\cX_{i}$ under the natural projection $\hh\oplus\hh^{*} \rightarrow (\hh\oplus\hh^{*})/\fS_n$. This is a symplectic leaf. The support of $H_{c}/\cJ_{i}$ is $\overline{\cL}_{i}$. Now recall Lemma \ref{lemma:annihilators}, that says that for a HC $H_{c}\text{-}H_{c'}$-bimodule $B$, $\Supp(B) = \Supp(H_{c}/\lann(B)) = \Supp(H_{c'}/\rann(B))$. This implies that $\HC(c,c') = 0$ unless $c, c'$ have the same denominator when expressed as an irreducible fraction. Thus, throughout this section we will assume that $c = r/m$, $c' = r'/m$, with $\gcd(r;m) = \gcd(r';m) = 1$ and $1 < m \leq n$. 

We now give a description of the supports of irreducible modules in $\cO_{c}$. Namely, for every $i = 1, \dots, \lfloor n/m\rfloor$, let $X'_i = \{(x_1, \dots, x_n) \in \CC^{n} :\sum x_i = 0, x_1 = x_2 = \cdots = x_m, x_{m+1} = \cdots = x_{2m}, \dots, x_{(i-1)m+1} = \cdots = x_{im}\},$ and let $X_i$ be the union of the $\fS_n$ translates of $X'_i$, so that $\CC^{n-1} \supset X_1 \supset \cdots \supset X_{\lfloor n/m\rfloor}$. Then, \cite[Example 3.25]{BE}, \cite[Theorem 3.9]{wilcox}, any module in category $\cO_{c}$ is supported on one of the $X_i$. Denote by $\cO_{c}^{i}$ the full subcategory of $\cO_{c}$ consisting of all modules whose support is contained in $X_i$. Note that this is a Serre subcategory of $\cO_{c}$. Let us explain a description of the category $\cO_{c}^{i}/\cO_{c}^{i+1}$ obtained in \cite{wilcox}. Let $p := n - im$, $q := \exp(2\pi\sqrt{-1}c)$ and consider the Hecke algebra $\cH_{q}(p)$. Then \cite[Theorem 1.8]{wilcox} tells us that the category $\cO_{c}^{i}/\cO_{c}^{i+1}$ is equivalent to the category of finite dimensional modules over the algebra $\CC\fS_{i}\otimes \cH_{q}(p)$.

Let us recall how \cite[Theorem 1.8]{wilcox} is proved, as this will be important for our arguments. So let $i$ and $p$ be as in the previous paragraph. Consider the subgroup $\fS_{m}^{\times i} \subseteq \fS_{n}$. Let $\underline{\hh} := \{x \in \hh : \fS_{m}^{\times i} \subseteq \text{Stab}_{\fS_n}(x) \} (= X'_{i})$ and $\underline{\hh}^{reg} := \{x \in \hh : \text{Stab}_{\fS_n}(x) = \fS_{m}^{\times i}\}$. Then, Wilcox proves that we have a localization functor, $\Loc^i: \cO^{i} \rightarrow \cD(\underline{\hh}^{reg})\#(\fS_{i} \times \fS_{p})\text{-mod}$, $M \mapsto \CC[\underline{\hh}^{reg}] \otimes_{\CC[\hh]} M$ that factors through $\cO^{i}/\cO^{i+1}$ and that identifies this quotient category with a subcategory of the category of $(\fS_{i} \times \fS_{p})$-equivariant $\cD(\underline{\hh}^{reg})$-modules with regular singularities. Then, he checks that under the Riemann-Hilbert correspondence that identifies the latter category with the category of finite dimensional representations of $\pi_1(\underline{\hh}^{reg}/(\fS_{i}\times\fS_{p}))$, the image of $\cO_{c}^{i}/\cO_{c}^{i+1}$ gets identified with the subcategory $(\CC\fS_{i}\otimes \cH_{q}(p))$-mod of  $\pi_1(\underline{\hh}^{reg}/(\fS_{i}\times\fS_{p}))$-rep where, recall, $q = \exp(2\pi\sqrt{-1}c)$. We denote by $\KZ^i: \cO^{i} \rightarrow  (\CC\fS_{i}\otimes \cH_{q}(p))$-mod the composition of the localization functor $\Loc^i$ with the Riemann-Hilbert correspondence. 

This construction has the following consequence for HC bimodules. Let $S \in \cO_{c}$ be the irreducible module supported on $X_i$ that gets sent to the trivial $\CC\fS_{i}\otimes\cH_{q}(p)$-module under $\KZ^i$ so that, in particular, $\Loc^i(S) = \CC[\underline{\hh}^{reg}]$. Then,  the proofs in Subsection \ref{subsection:KZ} can be carried out in this setting and we see that, whenever $T$ is a simple module with $\homf{S}{T} \neq 0$, and $N$ is another simple module in $\cO^{i}$, then $\KZ^i(\homf{S}{T}\otimes_{H_c} N) = \KZ^i(S)\otimes_{\CC} \KZ^i(T)$. 

\begin{lemma}\label{lemma:4.1}
Let $S \in \cO^{i}$ be the irreducible module satisfying $\KZ^i(S) = \CC$, the trivial $\CC\fS_{i}\otimes\cH_{q}(p)$-module. Let $T \in \cO_{c}$ (necessarily supported on $X_{i}$) be a simple module satisfying $\homf{S}{T} \neq 0$. Then, for every $M \in \CC\fS_{i}\otimes\cH_{q}(p)\modd$, the $\pi_1(\underline{\hh}^{reg}/(\fS_i \times \fS_p))$-module $\KZ^i(T)\otimes_{\CC} M$ factors through $\CC\fS_{i}\otimes \cH_q(p)$.
\end{lemma}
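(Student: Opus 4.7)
The plan is to transport the proof of Lemma \ref{lemma:mainKZ} to the stratum-level setting of $\KZ^i$. Specifically, I will establish that for every $N\in\cO^i$,
\[ \KZ^i(\homf{S}{T}\otimes_{H_c} N) \;\cong\; \KZ^i(T)\otimes_{\CC}\KZ^i(N), \]
as $\pi_1(\underline{\hh}^{reg}/(\fS_i\times\fS_p))$-modules, with the diagonal braid-group action on the right-hand side. Granting this, the lemma follows immediately: since $\KZ^i$ is essentially surjective onto $\CC[\fS_i]\otimes\cH_q(\fS_p)\text{-}\modd$ by \cite[Theorem 1.8]{wilcox}, any given $M$ in that category is of the form $\KZ^i(N)$ for some $N\in\cO^i$, and then $\KZ^i(T)\otimes_{\CC}M$ lies in the essential image of $\KZ^i$, which forces the $\pi_1$-action to factor through $\CC[\fS_i]\otimes\cH_q(\fS_p)$.

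To verify the displayed formula, I first need $\homf{S}{T}\otimes_{H_c}N\in\cO^i$ so that $\KZ^i$ applies. By \cite[Lemma 5.7.2]{Lo}, $\homf{S}{T}\neq 0$ forces $\supp T=\supp S\subseteq X_i$, and the associated variety of the HC bimodule $\homf{S}{T}$ is contained in $\overline{\cL_i}$; this bounds the support of the tensor product inside $X_i$. Next I compute the localization to $\underline{\hh}^{reg}$. Arguing as in Lemma \ref{lemma:localization}, the bimodule $\Loc^i(\homf{S}{T})$ embeds into $\Diff(\Loc^i(S),\Loc^i(T))=\Diff(\CC[\underline{\hh}^{reg}],\Loc^i(T))$, and since $\Loc^i(T)$ is a $(\fS_i\times\fS_p)$-equivariant local system on $\underline{\hh}^{reg}$ (Wilcox's description of $\cO^i/\cO^{i+1}$), Grothendieck's formula gives
\[ \Diff(\CC[\underline{\hh}^{reg}],\Loc^i(T)) \;\cong\; \Loc^i(T)\otimes_{\CC[\underline{\hh}^{reg}]}\cD(\underline{\hh}^{reg}). \]
Tensoring over $\cD(\underline{\hh}^{reg})$ with $\Loc^i(N)$ then applying $DR$ to the resulting $\cO_{\underline{\hh}^{reg}}$-tensor product via \cite[Proposition 4.7.8]{HTT} produces precisely the diagonal tensor product of the two $\pi_1$-modules.

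The main obstacle is ensuring that the a priori inclusion $\Loc^i(\homf{S}{T})\hookrightarrow\Loc^i(T)\otimes_{\CC[\underline{\hh}^{reg}]}\cD(\underline{\hh}^{reg})$ becomes an equality after tensoring with $\Loc^i(N)$ and passing to $DR$, so that the display above holds on the nose rather than up to a subobject. As in Remark \ref{rmk:socleKZ}, this should be forced by the fact that $\Loc^i(S)=\CC[\underline{\hh}^{reg}]$ is a simple $(\fS_i\times\fS_p)$-equivariant $\cD(\underline{\hh}^{reg})$-module, so that the evaluation map $\Loc^i(\homf{S}{T})\otimes_{\cD(\underline{\hh}^{reg})}\Loc^i(S)\to\Loc^i(T)$ is an isomorphism by the same argument as in Proposition \ref{irredbimod}. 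Once this is in hand, the essential surjectivity of $\KZ^i$ and the standard compatibility of $DR$ with $\cO_X$-tensor products of flat connections complete the argument.
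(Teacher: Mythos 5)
Your proposal is correct and follows essentially the same route as the paper's; the paper itself reduces Lemma \ref{lemma:4.1} to the single assertion that ``the proofs in Subsection \ref{subsection:KZ} can be carried out in this setting,'' and your argument is precisely the transport of Lemma \ref{lemma:mainKZ} (together with Lemma \ref{lemma:localization} and Proposition \ref{irredbimod}) to the stratum $\underline{\hh}^{reg}$, including the final appeal to essential surjectivity of $\KZ^i$ via \cite[Theorem 1.8]{wilcox}.
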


The previous lemma gives an upper bound on the number of irreducible objects in the category $\HC_{\cL_{i}}(H_c)$, namely, since $c \not\in \ZZ$ then, using Lemma \ref{lemma:mainKZ} we see that, if $T \in \cO^{i}_{c}$ is such that $\homf{S}{T} \neq 0$, then $\KZ^{i}(T)$ has to be of the form $\lambda \otimes \CC$, where $\lambda$ is an irreducible representation of $\fS_i$ and $\CC$ stands for the trivial $\cH_q(p)$ representation.

\begin{prop}\label{prop:upperbound}
The number of irreducible HC bimodules whose support coincides with the closure of the symplectic leaf $\mathcal{L}_{i}$ is no more than $p(i)$, the number of partitions of $i$. 
\end{prop}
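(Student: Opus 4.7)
The plan is to mimic the strategy of Subsection \ref{subsect:1param} in the restricted setting of the symplectic leaf $\cL_i$, using the functor $\KZ^i$ and the module $S \in \cO_c^i$ with $\KZ^i(S) = \triv$ in place of $\KZ$ and the socle of $\Delta(\triv)$. To each simple $V \in \HC_{\cL_i}(H_c)$ I attach an irreducible representation $\lambda$ of $\fS_i$, and I argue that the assignment $V \mapsto \lambda$ is at most one-to-one.

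First, following Proposition \ref{prop:subbimod}, let $V$ be a simple object of $\HC_{\cL_i}(H_c)$. Because $V$ has associated variety meeting the stratum $\cX_i$, the localization $V|_{\underline{\hh}^{reg}}$ is a nonzero bimodule over $\cD(\underline{\hh}^{reg}) \# (\fS_i \times \fS_p)$, and by \cite{Br} a progenerator on both sides. Hence $V \otimes_{H_c} S$ is nonzero modulo $\cO_c^{i+1}$, so it admits an irreducible quotient $T \in \cO_c^i \setminus \cO_c^{i+1}$, and adjunction yields an embedding $V \hookrightarrow \homf{S}{T}$ inside $\HC_{\cL_i}(H_c)$. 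Write $\KZ^i(T) = \lambda \boxtimes \mu$ with $\lambda$ an $\fS_i$-irrep and $\mu$ an $\cH_q(\fS_p)$-irrep. Specialising Lemma \ref{lemma:4.1} to test modules of the form $M = \triv \boxtimes N'$ shows that $\mu \otimes_\CC N'$ must factor through $\cH_q(\fS_p)$ for every $\cH_q(\fS_p)$-module $N'$. Because $\fS_p$ has a single conjugacy class of reflections and $c = r/m \notin \ZZ$, the argument of Lemma \ref{lemma:integerorhalf} transplanted to $\cH_q(\fS_p)$ forces $\mu = \triv$. Thus $\KZ^i(T) = \lambda \boxtimes \triv$.

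Finally, I need to check that $\lambda$ determines $V$ up to isomorphism, which I plan to do via an $(\fS_i \times \fS_p)$-equivariant analog of Lemma \ref{lemma:localization} and Proposition \ref{irredbimod}. Concretely, because $S|_{\underline{\hh}^{reg}} = \CC[\underline{\hh}^{reg}]$ and $T|_{\underline{\hh}^{reg}}$ is an irreducible $(\fS_i \times \fS_p)$-equivariant local system with monodromy $\lambda \boxtimes \triv$, the localization $\homf{S}{T}|_{\underline{\hh}^{reg}}$ embeds into $\Diff(S|_{\underline{\hh}^{reg}}, T|_{\underline{\hh}^{reg}})$ and the latter is simple as an equivariant $\cD(\underline{\hh}^{reg})$-bimodule. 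Therefore $\homf{S}{T}$ contains a unique simple sub-bimodule that survives in $\HC_{\cL_i}(H_c)$, which must then coincide with $V$, and the bound $|\fS_i\text{-irrep}|$ follows. The main obstacle is this last step: one must verify that the proof of Proposition \ref{irredbimod} goes through in the equivariant setting, for which the key input is that $\cD(\underline{\hh}^{reg}) \# (\fS_i \times \fS_p)$ is Morita equivalent to $\cD(\underline{\hh}^{reg}/(\fS_i \times \fS_p))$, a simple noetherian ring, so that the progenerator criterion of \cite{Br} still applies, and that the irreducibility of $T|_{\underline{\hh}^{reg}}$ as an equivariant local system (not merely as a bare $\cD$-module) is inherited from the irreducibility of $\KZ^i(T) = \lambda \boxtimes \triv$.
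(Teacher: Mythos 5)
Your argument is essentially the same as the paper's (which is terse here and defers most of the work to the parenthetical claim that the proofs of Subsection \ref{subsection:KZ} carry over to the $\KZ^i$ setting): you use Lemma \ref{lemma:4.1} to force $\KZ^i(T) = \lambda \boxtimes \triv$ when $\homf{S}{T} \neq 0$, and then an analog of Proposition \ref{prop:subbimod}, Lemma \ref{lemma:localization}, and Proposition \ref{irredbimod} over $\cD(\underline{\hh}^{reg})\#(\fS_i\times\fS_p)$ to identify each simple object of $\HC_{\cL_i}(H_c)$ as the unique full-support-on-the-leaf simple inside some $\homf{S}{T}$. You are slightly more explicit than the paper about the injectivity step (that $\lambda$ determines $V$), and you correctly flag the two technical points that need checking — freeness of the $\Xi$-action on $\underline{\hh}^{reg}$ giving the Morita equivalence, and compatibility of equivariant simplicity with $\KZ^i$-simplicity — both of which do hold and underlie the paper's implicit appeal to Subsection \ref{subsection:KZ}.
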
 

We will see in Subsection \ref{subsect:allbimodA} that the bound obtained in Proposition \ref{prop:upperbound} is sharp.

\subsection{Semisimplicity of $\HC_{\cL_{i}}(H_c)$.}\label{subsection:semisimple} We have just obtained an upper bound on the number of irreducible objects of the category $\HC_{\cL_{i}}(H_c)$. In this subsection, we check that this category is semisimple. The proof is based on restriction functors, Subsection \ref{subsection:induction}. Recall that this is a functor $\bullet_{\dagger}: \HC_{\overline{\cL}_{i}}(H_{c}) \rightarrow \HC^{\Xi}_{0}(\underline{H}_{c})$ that identifies the quotient category $\HC_{\cL_{i}}(H_{c})$ with a full subcategory of the category of finite dimensional $\Xi$-equivariant HC $\underline{H}_{c}$ bimodules that is closed under taking subquotients where, recall, $\Xi = N_{W}(\underline{W})/\underline{W}$. Then, we start with a few remarks on finite dimensional bimodules.  

Recall that the algebra $H_{c}(n)$ has a finite dimensional module if and only if $c = r/n$, with $\gcd(r;n) = 1$, \cite[Theorem 1.2]{BEG2}. The unique irreducible finite dimensional $H_{c}$-module is $L_{c}(\triv)$ if $c > 0$; and it is $L_{c}(\sign)$ if $c < 0$. Moreover, the category of finite dimensional $H_{c}$-modules is semisimple, this follows either from the results of the previous subsection or from the fact that irreducibles in $\cO_{c}$ do not admit self-extensions, see e.g. \cite[Proposition 1.12]{BEG2}. Note that it follows that $H_{c}$ has a unique irreducible finite dimensional bimodule, and that this bimodule does not have self-extensions. 

We remark that a finite dimensional bimodule must be HC: given a finite dimensional bimodule $M$, for any element $x \in \CC[\hh]^{W}\cup\CC[\hh^{*}]^{W}$, there exists $n \gg 0$ such that $x^{n}M = 0 = Mx^{n}$. This follows from the existence of a grading on $M$ as a left $H_{c}$-module compatible with a grading on $H_{c}$ given by $\deg(\hh) = 1, \deg(\hh^{*}) = -1, \deg(W) = 0$ and a similar grading on $M$ as a right $H_{c}$-module. These gradings exist because the grading on $H_{c}$ is inner see e.g. \cite[Subsection 3.1]{GGOR}.

Now, assume that $c = r/m$, $m \leq n$, $\gcd(r;n) = 1$. Let $i \leq \lfloor n/m\rfloor$. Consider the subgroup $\underline{W} := \fS_{m}^{\times i} \subseteq \fS_n$. Note that $\Xi = N_{W}(\underline{W})/\underline{W}$ may be identified with $\fS_{i} \times \fS_{n - mi}$. Recall, Subsection \ref{subsection:induction}, that we have a restriction functor $\bullet_{\dagger}: \HC_{\overline{\cL}_{i}}(H_{c}) \rightarrow \HC^{\Xi}_{0}(\underline{H}_{c})$ that identifies the quotient category $\HC_{\cL_{i}}(H_{c})$ with a full subcategory of $\HC^{\Xi}_{0}(\underline{H}_{c})$ closed under taking subquotients. 

Let us apply the previous observations to the study of $\HC_{\cL_i}(H_{c})$. Note that the algebra $\underline{H}_{c}$ is isomorphic to $H_{r/m}(\CC^{m-1}, \fS_m)^{\otimes i}$. By the results above, this algebra has a unique finite dimensional bimodule, that does not have self-extensions. The subgroup $\fS_i \subseteq \Xi$ acts on $\underline{H}_{c}$ by permuting the tensor factors, and the subgroup $\fS_{n - mi}$ acts trivially. Then, the category $\HC^{\Xi}_{0}(\underline{H}_{c})$ is equivalent to the category of representations of the group $\fS_i \times \fS_{n - mi}$. Note, however, that $\fS_{n - mi}$ acts trivially on the image of the restriction functor $\bullet_{\dagger}$: this follows from the fact that simple HC bimodules supported on $\mathcal{L}_{i}$ are contained in bimodules of the form $\homf{S}{M}$, where $\KZ^{i}(S) = \CC$ and $\cH_{q}(n - mi)$ acts trivially on $\KZ^{i}(M)$, cf. Lemma \ref{lemma:4.1}. Then, we get the following result.

\begin{prop}\label{prop:semisimple}
The category $\HC_{\cL_{i}}(H_{c})$ is semisimple. Moreover, it is equivalent to the category of representations of $\fS_{i}/N$ for some normal subgroup $N \subseteq \fS_{i}$.
\end{prop}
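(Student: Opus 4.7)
The plan is to use the restriction functor $\bullet_\dagger : \HC_{\overline{\cL}_i}(H_c) \to \HC_0^{\Xi}(\underline{H}_c)$ of Subsection \ref{subsection:induction}, which identifies $\HC_{\cL_i}(H_c)$ with a subquotient-closed full subcategory of $\HC_0^{\Xi}(\underline{H}_c)$, and then to describe this ambient category explicitly as a category of representations of $\Xi = \fS_i \times \fS_{n-mi}$.

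First I would pin down $\HC_0^\Xi(\underline{H}_c)$. Using $\underline{H}_c \cong H_{r/m}(\fS_m)^{\otimes i}$, the remarks in the preceding subsection give that each tensor factor $H_{r/m}(\fS_m)$ has a unique simple finite-dimensional HC bimodule $L$ with no self-extensions. A Künneth-type argument then produces a unique simple finite-dimensional HC $\underline{H}_c$-bimodule, namely $L^{\otimes i}$, still with $\Ext^1(L^{\otimes i}, L^{\otimes i}) = 0$; consequently the non-equivariant category $\HC_0(\underline{H}_c)$ is semisimple with one simple object. Since $\fS_i \subseteq \Xi$ merely permutes pairwise isomorphic tensor factors and $\fS_{n-mi}$ acts trivially on $\underline{H}_c$, upgrading an object of $\HC_0(\underline{H}_c)$ to a $\Xi$-equivariant one is the same data as a $\Xi$-representation on the multiplicity space, yielding $\HC_0^\Xi(\underline{H}_c) \simeq \rep(\fS_i \times \fS_{n-mi})$, which is semisimple.

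Next I would invoke the observation made immediately before the statement: any simple $V \in \HC_{\cL_i}(H_c)$ embeds into some $\homf{S}{T}$ with $\KZ^i(S) = \CC$ and $\cH_q(\fS_{n-mi})$ acting trivially on $\KZ^i(T)$ (by Proposition \ref{prop:subbimod} adapted to $\cO^i$ combined with Lemma \ref{lemma:4.1} and the single-parameter argument of Subsection \ref{subsect:1param}). This forces $\fS_{n-mi}$ to act trivially on $V_\dagger$, so the image of $\bullet_\dagger$ sits inside the full subcategory $\rep(\fS_i) \subseteq \rep(\fS_i \times \fS_{n-mi})$. Being a subquotient-closed full subcategory of a semisimple abelian category, it is itself semisimple, which establishes the first claim.

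For the final assertion, I would use that $\HC_{\overline{\cL}_i}(H_c)$ is closed under $\otimes_{H_c}$ (left annihilators only grow, so a tensor product of bimodules annihilated on the left by $\cJ_i$ is again annihilated by $\cJ_i$) and that $\bullet_\dagger$ is compatible with tensor products; consequently the image is a tensor subcategory of $\rep(\fS_i)$ closed under subquotients. Setting $N$ to be the intersection of the kernels of the irreducible $\fS_i$-representations appearing in the image gives a normal subgroup $N \trianglelefteq \fS_i$; the image is then contained in $\rep(\fS_i/N)$, and the combination of tensor-closure and subquotient-closure forces equality, producing the desired equivalence $\HC_{\cL_i}(H_c) \simeq \rep(\fS_i/N)$. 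The principal obstacles I anticipate are the careful verification of the Künneth vanishing $\Ext^1(L^{\otimes i}, L^{\otimes i}) = 0$ inside $\HC_0(\underline{H}_c)$ and of the tensor-compatibility of $\bullet_\dagger$; both are standard but need to be traced through the definitions of the restriction functor rather than taken for granted.
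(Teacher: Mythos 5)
Your proposal follows essentially the same route as the paper: apply the restriction functor $\bullet_\dagger$ to land in $\HC_0^\Xi(\underline{H}_c)$, identify that ambient category with $\rep(\fS_i\times\fS_{n-mi})$ using the structure of finite-dimensional HC bimodules over $\underline{H}_c\cong H_{r/m}(\fS_m)^{\otimes i}$, and then invoke Lemma~\ref{lemma:4.1} to see that $\fS_{n-mi}$ acts trivially on the image. The one place where you go beyond the paper's text is the final step: the paper merely asserts the equivalence with $\rep(\fS_i/N)$, whereas you correctly observe that this requires knowing the image is a subquotient-closed \emph{tensor} subcategory of $\rep(\fS_i)$ (via tensor-compatibility of $\bullet_\dagger$ and the fact that $\HC_{\overline{\cL}_i}$ is closed under $\otimes_{H_c}$), after which the Burnside--Brauer argument pins it down as $\rep(\fS_i/N)$; this is a genuine detail that the paper leaves implicit, and your treatment of it is correct.
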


In the next two subsections we are going to see that, in fact, $N = \{1\}$. First, we are going to do it for the case $c = r/m$, where $m$ divides $n$ and $i = n/m$. It turns out that the case $2m = n$ is important in our argument for the general case, that we explain in Subsection \ref{subsect:allbimodA}

\subsection{Bimodules with minimal support.}\label{subsect:min_supp} We give a complete description of the category of HC $H_{c}(n)$-bimodules with minimal support when the parameter $c$ has the form $c = r/m$, for $m$ a divisor of $n$. In particular, we show that in this case the normal subgroup $N$ in Proposition \ref{prop:semisimple} is trivial. Throughout this subsection, we denote $k := n/m$. For convenience, we assume that $c > 0$, we will deal with the case $c < 0$ at the end of this subsection. The following is our main result.

\begin{prop}\label{prop:min_supp}
Let $c := r/m$, where $\gcd(r;m) = 1$ and $m$ is a divisor of $n$. Let $k :=  n/m$. Then, the category $\HC_{\cL_{k}}(H_{c}(n))$ of HC $H_{c}$-bimodules with minimal support is equivalent, as a monoidal category, to the category of representations of $\fS_{k}$.
\end{prop}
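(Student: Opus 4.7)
The plan is to use Proposition \ref{prop:semisimple}, which already gives that $\HC_{\cL_q}(H_c)$ is semisimple and equivalent to $\fS_q/N$-rep for some normal subgroup $N \trianglelefteq \fS_q$; the task is then to show $N = 1$. Equivalently, for every irreducible $\fS_q$-representation $\lambda$ I must exhibit a simple HC bimodule in $\HC_{\cL_q}(H_c)$ whose corresponding $\fS_q/N$-rep pulls back to $\lambda$. Since $n - mq = 0$, the Hecke factor in Wilcox's equivalence collapses and $\cO_c^q/\cO_c^{q+1} \simeq \CC[\fS_q]\text{-mod}$, so for each partition $\lambda \vdash q$ there is a unique simple $M_\lambda \in \cO_c$ with $\supp(M_\lambda) = X_q$ and $\KZ^q(M_\lambda) = \lambda$. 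Write $S := M_{(q)}$, so that $\Loc^q(S) = \CC[\underline{\hh}^{reg}]$ is the trivial $\fS_q$-equivariant $\cD$-module on $\underline{\hh}^{reg}$.

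For each $\lambda$, set $V_\lambda := \homf{S}{M_\lambda}$. The key claim, parallel to Lemma \ref{lemma:mainKZ}, is that for every $\mu \vdash q$,
\begin{equation*}
\KZ^q(V_\lambda \otimes_{H_c} M_\mu) \;\cong\; \lambda \otimes \mu \quad \text{in } \fS_q\text{-rep}.
\end{equation*}
The argument is the one used for Lemma \ref{lemma:mainKZ}, but with $\hh^{reg}$ replaced by $\underline{\hh}^{reg}$ and $\KZ$ replaced by $\KZ^q$: the results of Subsection \ref{subsection:localization} go through and identify $\Loc^q(V_\lambda)$ with $\Loc^q(M_\lambda) \otimes_{\CC[\underline{\hh}^{reg}]} \cD(\underline{\hh}^{reg})$ as a $\cD(\underline{\hh}^{reg})\#\fS_q$-bimodule, so $\Loc^q(V_\lambda \otimes_{H_c} M_\mu) \cong \Loc^q(M_\lambda) \otimes_{\CC[\underline{\hh}^{reg}]} \Loc^q(M_\mu)$, and de Rham converts this to the outer tensor product of the corresponding local systems. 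Taking $\mu = (q)$ shows in particular that $V_\lambda \neq 0$.

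Under the equivalence $\HC_{\cL_q}(H_c) \simeq \fS_q/N$-rep of Proposition \ref{prop:semisimple}, a simple bimodule $V^\rho$ corresponding to an $\fS_q/N$-irrep $\rho$ acts on $\cO_c^q/\cO_c^{q+1} \simeq \fS_q$-rep by outer tensor product with the pullback of $\rho$ to $\fS_q$; this follows from tracing Wilcox's $\Loc^q$ through $\bullet_\dagger$, together with the fact that the unique simple finite-dimensional $\underline{H}_c$-bimodule acts as the projector onto the finite-dimensional isotypic component of a $\underline{H}_c$-module. Decomposing $V_\lambda = \bigoplus_\rho (V^\rho)^{\oplus n_\rho}$ in the semisimple category $\HC_{\cL_q}(H_c)$ and equating the induced actions on the trivial $\fS_q$-rep gives $\lambda = \bigoplus_\rho \rho^{\oplus n_\rho}$ as $\fS_q$-rep; irreducibility of $\lambda$ then forces some $\rho \cong \lambda$, so $\lambda$ factors through $\fS_q/N$. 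As this holds for every $\lambda$, $N$ is trivial. The case $c < 0$ reduces to $c > 0$ via the isomorphism $H_c \cong H_{-c}$ sending $s \mapsto -s$, which preserves the support stratification and interchanges the roles of $\triv$ and $\sign$.

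The main obstacle is justifying the $\fS_q$-equivariant analogue of Lemma \ref{lemma:mainKZ} on $\underline{\hh}^{reg}$, which requires rerunning the arguments of Subsections \ref{subsection:localization}--\ref{subsection:KZ} in the equivariant setting (in particular an equivariant version of Proposition \ref{irredbimod}); a secondary subtle point is pinning down the action of $\HC_{\cL_q}(H_c)$ on $\cO_c^q/\cO_c^{q+1}$ as outer tensor product of $\fS_q$-reps, which requires tracing the equivalence of Proposition \ref{prop:semisimple} through the restriction functor $\bullet_\dagger$ and Wilcox's equivalence.
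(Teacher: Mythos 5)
The fundamental gap is circularity at the crucial step. You want to show $N = 1$ in Proposition \ref{prop:semisimple} by exhibiting, for each $\lambda \vdash q$, a nonzero simple object of $\HC_{\cL_q}(H_c)$ inside $V_\lambda := \homf{S}{M_\lambda}$; your device for doing this is the equivariant analogue of Lemma \ref{lemma:mainKZ}, taking $\mu = (q)$ to conclude $V_\lambda \neq 0$. But the KZ formula
\[
\KZ^q(V_\lambda \otimes_{H_c} M_\mu) \cong \KZ^q(M_\lambda) \otimes_{\CC} \KZ^q(M_\mu)
\]
is only valid under the prior hypothesis that $V_\lambda \neq 0$. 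Its proof (both in Lemma \ref{lemma:mainKZ} and in the discussion at the end of Subsection \ref{subsection:prelimtypeA}) first identifies $\Loc^q(V_\lambda)$ with the full bimodule $\Diff(\Loc^q(S), \Loc^q(M_\lambda)) \cong \Loc^q(M_\lambda)\otimes_{\CC[\underline{\hh}^{reg}]}\cD(\underline{\hh}^{reg})$. That identification goes through Lemma \ref{lemma:localization} (giving an inclusion) and Proposition \ref{irredbimod} (irreducibility of the target), and it is precisely an argument of the shape ``a nonzero sub-bimodule of an irreducible bimodule is the whole thing''; when $V_\lambda = 0$ the localization is zero and the identity fails trivially. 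This is why the paper's Lemma \ref{lemma:mainKZ} carries the explicit hypothesis ``$\homf{\Delta_{c'}(\triv)}{M}\neq 0$'', and the paragraph in Subsection \ref{subsection:prelimtypeA} likewise says ``whenever $T$ is a simple module with $\homf{S}{T} \neq 0$''. You cannot run the formula backward to establish the non-vanishing you were after.

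In fact, the non-vanishing of $\homf{L(\triv)}{L(m\lambda)}$ for \emph{every} partition $\lambda$ of $q$ is essentially the entire content of the proposition and cannot be gotten for free from the KZ or restriction-functor machinery, which only bounds the number of irreducibles from above (Propositions \ref{prop:upperbound} and \ref{prop:semisimple}). The paper supplies the missing lower bound by a completely different route: the Etingof--Calaque--Enriquez / Etingof--Gorsky--Losev isomorphism $eH_{r/m}(n)e/e\cJ_{\max}e \cong e'H_{m/r}(rq)e'/e'\cJ'_{\max}e'$ transports minimally supported HC bimodules between the two algebras (Proposition \ref{prop:CEEiso}); the base case $r=1$ then reduces to the integral-parameter algebra $H_m(q)$, where \cite[Theorem 8.5]{BEG} gives the full $\fS_q$-rep category; and shift functors feed the Euclidean-algorithm induction on $r$. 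This is an entirely different mechanism from what you propose, and there is no shortcut around it visible from the localization/KZ side alone. Your secondary step --- that simple objects of $\HC_{\cL_q}(H_c)$ act on $\cO_c^q/\cO_c^{q+1}\simeq\fS_q$-rep by outer tensor product --- is plausible but only sketched; it is not where the proof breaks, but it would also need a careful argument tracing $\Loc^q$ through $\bullet_\dagger$.
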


The proof of Proposition \ref{prop:min_supp} will be done by induction on $r$. The proof for the case $r = 1$ is based on a symmetry result obtained in \cite{CEE}, see also \cite{EGL}. There is a symmetry of parameters for the simple quotients of spherical rational Cherednik algebras. Namely, for positive integers $n, N$ (not necessarily coprime) consider the Cherednik algebras $H_{N/n}(n)$ and $H_{n/N}(N)$, with maximal ideals $\cJ_{max}$ and $\cJ'_{max}$, respectively. Both parameters are spherical so $e\cJ_{max}e$, $e'\cJ'_{max}e'$ are the maximal ideals of the spherical Cherednik algebras $eH_{N/n}(n)e$ and $e'H_{n/N}(N)e'$, respectively. Here, $e \in\CC\fS_n$ and $e' \in \CC\fS_N$ denote the trivial idempotents in their respective group algebras. Then, by \cite[Proposition 9.5]{CEE}, \cite[Proposition 7.7]{EGL}, we have an isomorphism between the algebras $eH_{N/n}(n)e/e\cJ_{max}e$ and $e'H_{n/N}(N)e'/e'\cJ'_{max}e'$, mapping (the images of) the subalgebras $\CC[\hh_{n}]^{\fS_n}, \CC[\hh_{n}^{*}]^{\fS_n}$ to (the images of) the subalgebras $\CC[\hh_{N}]^{\fS_N}$, $\CC[\hh_{N}^{*}]^{\fS_N}$, respectively. Since HC $eH_{c}e$-bimodules with minimal support are precisely the ones whose annihilator is the maximal ideal in $eH_{c}e$, we have the following easy consequence of the above mentioned results.

\begin{prop}\label{prop:CEEiso}
The isomorphism $eH_{N/n}(n)e/e\cJ_{max}e \cong e'H_{n/N}(N)e'/e'\cJ'_{max}e'$ induces a tensor equivalence between the categories of minimally supported HC $eHe_{N/n}(n)$-bimodules and minimally supported HC $eHe_{n/N}(N)$-bimodules. 
\end{prop}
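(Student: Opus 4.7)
The key reduction, noted in the paragraph preceding the statement, is that an HC $eH_{N/n}(n)e$-bimodule has minimal support if and only if it is annihilated (from both sides) by the maximal ideal $e\cJ_{max}e$, and similarly for the other algebra. Setting $A := eH_{N/n}(n)e/e\cJ_{max}e$ and $A' := e'H_{n/N}(N)e'/e'\cJ'_{max}e'$, the category $\HC_{\cL_{\max}}(eH_{N/n}(n)e)$ is therefore identified, as an abelian category with a tensor structure (since $\otimes_{eH_{N/n}(n)e}$ of bimodules annihilated by $e\cJ_{max}e$ coincides with $\otimes_A$), with the full subcategory of $A$-bimodules satisfying the HC condition, and analogously for $A'$. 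So it suffices to transport this HC subcategory along the algebra isomorphism $\phi: A \cong A'$ supplied by \cite[Proposition 9.5]{CEE}, \cite[Proposition 7.7]{EGL}.

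The plan is then as follows. First, the algebra isomorphism $\phi$ yields an equivalence of abelian categories between $A$-bimodules and $A'$-bimodules (pull back the left and right actions through $\phi$ and $\phi^{-1}$). This equivalence is automatically monoidal because the tensor product of bimodules is defined purely from the algebra structure that $\phi$ preserves, so the canonical isomorphism $V_1 \otimes_A V_2 \cong \phi_* V_1 \otimes_{A'} \phi_* V_2$ is a coherent tensor structure. Second, I must check that the HC condition is preserved in both directions. An $A$-bimodule $V$ is HC precisely when it is finitely generated and the adjoint action of each element of the images of $e\CC[\hh_n]^{\fS_n}$ and $e\CC[\hh_n^*]^{\fS_n}$ is locally nilpotent. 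Finite generation is obviously transported by any algebra isomorphism. For the nilpotence condition, I invoke the extra content of \cite[Prop.~9.5]{CEE}, \cite[Prop.~7.7]{EGL}: $\phi$ sends the images of $\CC[\hh_n]^{\fS_n}$ and $\CC[\hh_n^*]^{\fS_n}$ in $A$ to the images of $\CC[\hh_N]^{\fS_N}$ and $\CC[\hh_N^*]^{\fS_N}$ in $A'$, respectively. Hence local nilpotence of the adjoint action of these two subalgebras is preserved by $\phi$ and $\phi^{-1}$, and the HC property transports in both directions.

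Combining the two paragraphs gives the desired equivalence of tensor categories. There is no real obstacle: the proposition is essentially a formal consequence of the isomorphism of \cite{CEE, EGL}. The one point that genuinely uses the structure of $\phi$ (rather than its mere existence) is the matching of the two pairs of $W$-invariant subalgebras, which is precisely the refined form in which the cited isomorphism is stated and which must be quoted explicitly in the argument.
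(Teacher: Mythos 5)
Your proof is correct and takes essentially the same route the paper has in mind: the paper states the proposition as an ``easy consequence'' of the preceding paragraph, which records exactly the two ingredients you use, namely that minimally supported HC bimodules are precisely those annihilated on both sides by the maximal ideal, and that the isomorphism of \cite{CEE}, \cite{EGL} matches up (the images of) $\CC[\hh_n]^{\fS_n}, \CC[\hh_n^*]^{\fS_n}$ with $\CC[\hh_N]^{\fS_N}, \CC[\hh_N^*]^{\fS_N}$, so that the HC condition transports. Your spelling out of the reduction to $A$-bimodules, the observation that $\otimes_{eHe}$ agrees with $\otimes_A$ on such bimodules, and the verification that an algebra isomorphism respecting the relevant invariant subalgebras gives a monoidal equivalence preserving the HC subcategory, is exactly the content the author elides.
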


Now, the parameter $c = r/m > 0$, with $\gcd(r;m) = 1$ and $mk = n$ for $k \in \ZZ_{> 0}$, is spherical for the rational Cherednik algebra associated to $\fS_{n}$. Then, Proposition \ref{prop:CEEiso} has the following consequence.

\begin{cor}\label{cor:CEEiso}
The categories of minimally supported HC $H_{r/m}(n)$-bimodules and minimally supported HC $H_{m/r}(rk)$-bimodules are equivalent as monoidal categories.
\end{cor}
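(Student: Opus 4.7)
The plan is to deduce this corollary from Proposition \ref{prop:CEEiso} by passing back and forth through the spherical subalgebras via Morita equivalence. The key observations are that (i) both parameters involved are spherical, and (ii) the Morita equivalence $M \mapsto eM$ is compatible with both the tensor structure and the notion of support on HC bimodules.

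To match parameters with Proposition \ref{prop:CEEiso}, I set $N := rq$ and $n := mq$. Then $N/n = r/m = c$ and $n/N = m/r$, so that proposition directly produces a tensor equivalence between the category of minimally supported HC $eH_{r/m}(n)e$-bimodules and the category of minimally supported HC $e'H_{m/r}(rq)e'$-bimodules, where $e \in \CC\fS_n$ and $e' \in \CC\fS_{rq}$ are the trivial idempotents of the respective symmetric groups.

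Next I verify sphericity on the other side. Sphericity of $c = r/m$ for $H_c(\fS_n)$ is noted in the paragraph preceding the corollary. The parameter $m/r$ for $H(\fS_{rq})$ has denominator $r$ in lowest terms (since $\gcd(m;r) = 1$), and since $r$ divides $rq$ and $r > 1$ if $m/r$ is to be nonintegral (or there is nothing to check), the same criterion gives sphericity. Thus the Morita equivalences restrict to tensor equivalences $\HC(H_{r/m}(n)) \cong \HC(eH_{r/m}(n)e)$ and $\HC(H_{m/r}(rq)) \cong \HC(e'H_{m/r}(rq)e')$, as recalled in Subsection \ref{subsection:HC}. Since passing between $M$ and $eM$ does not alter the associated variety in $(\hh\oplus\hh^*)/W$, these Morita equivalences preserve supports, and in particular restrict to equivalences on the minimally supported full subcategories.

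Composing the Morita equivalence on the $\fS_n$ side, the spherical equivalence from Proposition \ref{prop:CEEiso}, and the inverse Morita equivalence on the $\fS_{rq}$ side yields the desired equivalence of tensor categories. The only point needing explicit verification is that the Morita equivalence is compatible with tensor products of HC bimodules, which reduces to the natural isomorphism $eMe \otimes_{eH_ce} eNe \cong e(M \otimes_{H_c} N)e$ available when $c$ is spherical; everything else is routine bookkeeping.
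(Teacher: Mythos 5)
Your argument is correct and is essentially the paper's: the corollary follows by combining Proposition~\ref{prop:CEEiso} with the sphericity of the parameters (which the paper asserts in the sentence immediately preceding the corollary) so that the Morita equivalences $M \mapsto eM$ transport the spherical statement to the full Cherednik algebras. Your added detail — the explicit parameter matching $N = rq$, $n = mq$, the observation that $m/r > 0$ is spherical for $\fS_{rq}$ by the same type~A criterion, and the compatibility of $e(\bullet)e$ with tensor products and supports — is precisely what the paper leaves implicit.
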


Now the case $r = 1$ of Proposition \ref{prop:min_supp} is an easy consequence of Corollary \ref{cor:CEEiso} and \cite[Theorem 8.5]{BEG}, that asserts that the category of HC $H_m(k)$-bimodules is equivalent, as a monoidal category, to the category of representations of $\fS_k$. To complete the proof of Proposition \ref{prop:min_supp} we use an inductive argument for which we will need the theory of shift functors for rational Cherednik algebras of type A, see e.g. \cite[Section 3]{gordon}. Namely, consider the $eH_{c+1}(n)e\text{-}eH_{c}(n)e$-bimodule $Q_{c}^{c+1} := eH_{c+1}(n)e_{\sign}\delta$. Here, $e_{\sign}$ denotes the sign idempotent, $e_{\sign} = \frac{1}{n!}\sum_{\sigma \in \fS_n}\sign(\sigma)\sigma$. The bimodule $Q_{c}^{c+1}$ is HC, this follows from \cite[Theorem 1.7]{GGS}. The functor $F: eH_{c}e(n)\text{-mod} \rightarrow eH_{c+1}e(n)\text{-mod}$ given by $F(M) = Q_{c}^{c+1}\otimes_{eH_{c}e} M$ is then an equivalence of categories, \cite[Corollary 4.3]{BE}. A quasi-inverse functor is given by tensoring with the $(eH_{c+1}(n)e, eH_{c}(n)e)$-bimodule $P_{c}^{c+1} := \delta^{-1}e_{\sign}H_{c+1}(n)e$, see Section 3 in \cite{gordon} (we remark that \cite{gordon} assumes that $c \not\in \frac{1}{2} + \ZZ$, an assumption that was later removed in \cite[Corollary 4.3]{BE}). The bimodule $P_{c}^{c+1}$ is also HC. It then follows that we have an equivalence of monoidal categories $F: \HC(eH_{c}(n)e) \rightarrow \HC(eH_{c+1}(n)e)$, $F(V) = Q_{c}^{c+1}\otimes_{eH_{c}(n)e} V \otimes_{eH_{c}(n)e} P_{c}^{c+1}$. Clearly, this equivalence preserves the filtrations of the categories of HC bimodules by the support. 

We now proceed to finish the proof of Proposition \ref{prop:min_supp}. So let $r, m, n, k$ be as in the statement of that proposition. We work over spherical subalgebras, and we make the following inductive assumption: \\

{\it For every $0 < r' < r$ and every $m', k' \in \ZZ_{> 0}$ with $\gcd(r', m') = 1$, the category $\HC_{\cL_{k'}}(eH_{r'/m'}(m'k')e)$ is equivalent, as a monoidal category, to the category of representations of $\fS_{k'}$.} \\

Clearly, Proposition \ref{prop:CEEiso}, together with \cite[Theorem 8.5]{BEG}, give the base of induction. Now, using Proposition \ref{prop:CEEiso} again, we have that the categories $\HC_{\cL_{k}}(eH_{r/m}(n)e)$ and $\HC_{\cL_{k}}(e'H_{m/r}e'(kr))$ are equivalent as monoidal categories. Using shift functors, we get a tensor equivalence between $\HC_{\cL_{k}}(eH_{r/m}(n)e)$ and $\HC_{\cL_{q}}(e'H_{r'/r}(kr)e')$, where $0 < r' < r$. By our inductive assumption, this is tensor equivalent to $\fS_{k}\!\repp$. Proposition \ref{prop:min_supp} now follows by sphericity, since we are assuming our parameter $c$ is positive. 

Let us give a description of the irreducible objects in $\HC_{\cL_{k}}(H_{r/m}(n))$. First of all, the irreducible modules in $\cO_{r/m}^{k}$ have the form $L(m \lambda)$, where $\lambda$ is a partition of $k$. The irreducible module that gets sent to the trivial $\fS_{k}$-representation under $\KZ^{k}$ is $L(m\triv_{k}) = L(\triv)$, where $\triv_{k}$ stands for the trivial partition of $k$ and $\triv = m\triv_k$ is the trivial partition of $n$. The localization $\Loc^{k}$ of the bimodule $\homf{L(\triv)}{L(m\lambda)}$ is irreducible, so each one of $\homf{L(\triv)}{L(m\lambda)}$ is either irreducible or $0$. But a HC bimodule $B \in \HC_{\cL_{k}}(H_{r/m}(n))$ is a Noetherian bimodule over the simple algebra $H_{c}/\cJ_{k}$, so it is a progenerator in the category of left and right modules over this algebra, cf. \cite[Theorem 10]{Br}. In particular, $B \otimes_{H_{c}} L(\triv) \neq 0$, so any irreducible HC bimodule with minimal support embeds in a bimodule of the form $\homf{L(\triv)}{L(m\lambda)}$. By counting, it follows that $\{\homf{L(\triv)}{L(m\lambda)} : \lambda \; \text{is a partition of} \; q\}$ is a complete list of irreducible HC $H_{r/m}(n)$-bimodules with minimal support. An explicit tensor equivalence is given as follows, $B = \homf{L(\triv)}{L(m\lambda)} \mapsto \KZ^{k}(B \otimes_{H_{r/m}(n)} L(\triv))$. That this functor intertwines tensor products follows by an analog of Lemma \ref{lemma:mainKZ}, using the functor $\KZ^{k}$ instead of $\KZ$.

\begin{rmk}\label{rmk:all_locfin_notzero}
We remark that, while $\{\homf{L(\triv)}{L(m\lambda)} : \lambda \vdash k\}$ forms a complete and irredundant list of irreducible HC bimodules with minimal support, we have that $\homf{L(m\mu)}{L(m\lambda)} \neq 0$ where $\lambda, \mu$ are any partitions of $k$. This follows from, for example, Theorem 8.16 in \cite{BEG}, which gives a description of $\homf{L(m\mu)}{L(m\lambda)}$ as a direct sum of bimodules of the form $\homf{L(\triv)}{L(m\xi)}$. 
\end{rmk}

To finish this subsection, let us explain what happens when we have $c = r/m < 0$, with $\gcd(r;m) = 1$ and $m$ divides $n$, say $km = n$. In this case, the category $\HC_{\cL_{k}}(n)$ is also equivalent to the category of representations of $\fS_{k}$. This follows because there is an equivalence $\HC_{\cL_{k}}(H_{r/m}(n)) \cong \HC_{\cL_{k}}(H_{-r/m}(n))$ induced by an isomorphism $H_{c}(n) \rightarrow H_{-c}(n)$, mapping $\hh^* \ni x \mapsto x$, $\hh \ni y \mapsto y$, $\fS_n \ni \sigma \mapsto \sign(\sigma)\sigma$.

\subsection{Irreducible HC bimodules}\label{subsect:allbimodA}  We use the results of the previous subsection and Section \ref{sect_reduction} to give a classification of all irreducible HC $H_{c}(n)$-bimodules where, as above, we assume that $c$ has the form $c = r/m > 0$, with $1 < m \leq n$ and $\gcd(r;m) = 1$. The following is the main result of this subsection.

\begin{theorem}\label{thm:maintypeA}
Let $c = r/m > 0$, with $1 < m \leq n$, $\gcd(r;m) = 1$, and let $i = 1, \dots, \lfloor n/m \rfloor$. Then, the category $\HC_{\cL_{i}}(H_{c}(n))$ is equivalent to the category of representations of $\fS_{i}$.
\end{theorem}

Before proceeding to the proof of Theorem \ref{thm:maintypeA} we describe the objects in the category $\HC^{\Xi}_{0}(\underline{H}_{c})$. Recall that this category is equivalent to the category of representations of $\Xi = \fS_{n - mi} \times \fS_{i}$, this follows because the algebra $\underline{H}_{c}$ has a unique irreducible finite dimensional bimodule (that does not admit non-trivial self-extensions). This bimodule is $\underline{B} := \Hom_{\CC}(L(\triv_{\underline{W}}), L(\triv_{\underline{W}}))$, this is a consequence of the fact that $L(\triv_{\underline{W}})$ is the unique irreducible finite dimensional module over the algebra $\underline{H}_{c}$. Moreover, since $c = r/m$ and $\underline{W} = \fS_{m}^{\times i}$, we have that $\underline{H}_{c} = H_{c}(m)^{\otimes i}$, and $\underline{B} = B^{\otimes i}$, where $B$ is the unique irreducible finite dimensional bimodule over $H_{c}(m)$, so $\underline{B}$ admits a $\Xi$-equivariant structure, where $\fS_{i}$ permutes the tensor factors and $\fS_{n - mi}$ acts trivially. Under the equivalence $\HC^{\Xi}_{0}(\underline{H}_{c}) \rightarrow (\fS_{i} \times \fS_{n - mi})\!\repp$, $\underline{B}$ corresponds to the trivial representation. So we have the following result.

\begin{lemma}
The irreducible objects in $\HC^{\Xi}_{0}(\underline{H}_{c})$ have the form $\underline{B} \otimes \xi$, where $\xi$ runs over the set of irreducible representations of $\fS_{i} \times \fS_{n - mi}$, which acts diagonally. The irreducibles where $\fS_{n - mi}$ acts trivially correspond precisely to those representations $\underline{B} \otimes \xi$ where $\xi$ factors through $\fS_{i}$.
\end{lemma}

\begin{proofmainA}
We need to check that, if $\xi$ is an irreducible representation of $\fS_{i}$, then the equivariant bimodule $\underline{B} \otimes \xi$ belongs to the image of $\bullet_{\dagger^{W}_{\underline{W}}}$. By Theorem \ref{thm:veryverytechnical}, for every parabolic subgroup $W'$ containing $\underline{W}$ in corank 1 we need to produce a HC $H_{c}(W')$-bimodule $B'$ with $B'_{\dagger^{W'}_{\underline{W}}} = \underline{B} \otimes \xi$, with the restricted $N_{W'}(\underline{W})/\underline{W}$-equivariant structure. The subgroups $W'$ have three different types. Either $W' \cong \fS_{m}^{\times(i - 2)} \times \fS_{2m}$, $W' \cong \fS_{m}^{\times (i - 1)} \times \fS_{m + 1}$ or $W' \cong \fS_{m}^{\times i} \times \fS_{2}$. \\

{\bf Case 1.} $W' \cong \fS_{m}^{\times(i - 2)}\times \fS_{2m}$. So that $N_{W'}(\underline{W})/\underline{W} \cong \fS_{2}$ acting on $\underline{H}_{c} = H_{r/m}(m)^{\otimes i}$ by permuting two of the tensor factors. Thanks to the results of Subsection \ref{subsect:min_supp} the functor $\bullet_{\dagger^{W'}_{\underline{W}}}: \HC_{\cL}(H_{c}(W')) \rightarrow \HC^{\fS_{2}}_{0}(\underline{H}_{c})$ is essentially surjective, were $\HC_{\cL}(H_{c}(W'))$ denotes the category of minimally supported HC $H_{c}(W')$-bimodules. So we can certainly find a bimodule $B'$ with $B'_{\dagger^{W'}_{\underline{W}}} = \underline{B} \otimes \xi$.  \\

{\bf Case 2.} $W' \cong \fS_{m}^{\times (i - 1)} \times \fS_{m+1}$, so that $N_{W'}(\underline{W})/\underline{W} \cong \{1\}$. Thus, what we have to check here is that $\underline{B}$ belongs to the image of the functor $\bullet_{\dagger^{W'}_{\underline{W}}}$. But this follows because the image of $\bullet_{\dagger^{W'}_{\underline{W}}}$ is closed under sub-bimodules. \\

{\bf Case 3.} $W' \cong \fS_{m}^{i} \times \fS_{2}$, so that $N_{W'}(\underline{W})/\underline{W} \cong \fS_{2}$, acting trivially on $\underline{H}_{c}$. Thanks to our assumptions on $\xi$, $\fS_{2}$ acts trivially on $\xi$. It also acts trivially on $\underline{B}$. So we need to check that $\underline{B}$, with trivial action of $\fS_{2}$, belongs to the image of $\bullet_{\dagger^{W'}_{\underline{W}}}$. Upon the identification $\HC^{\fS_{2}}_{0}(\underline{H}_{c}) \rightarrow \fS_{2}\!\repp$, $\underline{B}$ corresponds to the trivial representation. The image of the restriction functor is closed under tensor products and sub-bimodules. Since the trivial representation of $\fS_{2}$ is contained in $S^{\otimes 2}$ for any representation $S$ of $\fS_{2}$, the result follows.
\end{proofmainA}

\subsection{Case $c = r/n$.}\label{subsect:c=r/n} In this subsection, we completely characterize the category of HC bimodules over the algebra $H_{r/n}(n)$, where $\gcd(r;n) = 1$. By Theorem \ref{thm:main full support}, this algebra has a unique irreducible HC bimodule with full support, namely, the unique nonzero proper ideal $\cJ \subsetneq H_c$. By Subsection \ref{subsection:semisimple}, this bimodule does not have self-extensions. On the other hand, this algebra has a unique irreducible finite dimensional bimodule, namely $M := H_c/\cJ$, that does not admit self-extensions. The bimodules $M, \cJ$ form a complete list of irreducible HC bimodules. We now investigate extensions between them. For the rest of this section we denote simply by \lq$\Ext$\rq \; the extension group $\Ext^{1}_{H_c\text{-bimod}}$. 
It is clear that $\Ext(M, \cJ) \neq 0$, as $H_c$ is a nonsplit extension of $\cJ$ by $M$. On the other hand, \cite[Subsection 7.6]{BL} constructs a nonsplit extension $D$ of $M$ by $\cJ$. Our goal now is to show that $M, \cJ, H_c$ and $D$ form a complete list of indecomposable HC $H_c$-bimodules. This is a consequence of the following result. 

\begin{prop}\label{prop:ext=0}
The following is true:
\begin{enumerate}
\item[(i)] $\Ext(H_c, M) = 0$.
\item[(ii)] $\Ext(M, H_c) = 0$.
\item[(iii)] $\dim(\Ext(M, \cJ)) = 1$.
\item[(iv)] $\Ext(H_c, \cJ) = 0$.
\item[(v)] $\Ext(\cJ, H_c) = 0$. 
\item[(vi)] $\Ext(M, D) = 0$.
\item[(vii)] $\Ext(D, \cJ) = 0$.
\item[(viii)] $\Ext(D, M) = 0$.
\item[(ix)] $\Ext(\cJ, D) = 0$.
\item[(x)] $\dim(\Ext(\cJ, M)) = 1$.
\end{enumerate}
\end{prop}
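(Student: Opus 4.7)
The plan is to bootstrap from the two given nonsplit short exact sequences
\begin{equation*}
(A)\colon\ 0\to\cJ\to H_c\to M\to 0,\qquad (B)\colon\ 0\to M\to D\to\cJ\to 0,
\end{equation*}
together with the two vanishings $\Ext(M,M)=0$ and $\Ext(\cJ,\cJ)=0$ that are recorded in the paragraph preceding the statement (the former from semisimplicity of the category of finite dimensional HC $H_c$-bimodules; the latter from Subsection \ref{subsection:semisimple}). The first step is purely homological: simplicity and non-isomorphism of $M$ and $\cJ$ give $\Hom(M,\cJ)=\Hom(\cJ,M)=0$, and applying $\Hom(-,N)$ and $\Hom(N,-)$ to $(A)$ and $(B)$ then determines all remaining Hom spaces — specifically $\Hom(H_c,M)=\Hom(\cJ,H_c)=\Hom(M,D)=\Hom(D,\cJ)=\CC$, realised by the tautological morphisms, while $\Hom(H_c,\cJ)=\Hom(M,H_c)=\Hom(D,M)=\Hom(\cJ,D)=0$, these vanishings being forced by the injectivity of the connecting maps $\mathrm{id}\mapsto[A]$ and $\mathrm{id}\mapsto[B]$.

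Feeding the Hom computations into the $\Ext^1$-long exact sequences coming from $(A)$ and $(B)$, and using $\Ext(M,M)=\Ext(\cJ,\cJ)=0$, each LES collapses to a four-term exact sequence of the shape $0\to\CC\xrightarrow{\partial}\Ext^1(\cdot,\cdot)\to\Ext^1(\cdot,\cdot)\to 0$ in which $\partial$ is again injective. Reading these off, each of (ii) and (iv) is equivalent to (x), and each of (viii) and (ix) is equivalent to (iii); the reverse inequalities $\dim\Ext^1(M,\cJ)\ge 1$ and $\dim\Ext^1(\cJ,M)\ge 1$ are witnessed by $H_c$ and $D$ themselves. The remaining four identities (i), (v), (vi), (vii) involve the next term of the LES, and therefore require the additional input that the Yoneda products
\begin{equation*}
[B]\cup[A]\in\Ext^2(M,M)\quad\text{and}\quad[A]\cup[B]\in\Ext^2(\cJ,\cJ)
\end{equation*}
are nonzero; these represent the two length-three splicings $M\hookrightarrow D\twoheadrightarrow\cJ\hookrightarrow H_c\twoheadrightarrow M$ and $\cJ\hookrightarrow H_c\twoheadrightarrow M\hookrightarrow D\twoheadrightarrow\cJ$, and their nonvanishing can be extracted from the nonsplitness of $(A)$ and $(B)$ together with the observation that the socle and head of each length-three splice are the same simple object, which prevents the splice from collapsing.

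The main obstacle is therefore to prove the two upper bounds (x) and (iii). For (x), a putative non-split extension $0\to\cJ\to E\to M\to 0$ is a length-two HC bimodule with associated variety $(\hh\oplus\hh^*)/W$ and head $M$. Using that $\cJ$ is the unique simple HC bimodule of full support, I would embed $E$ into $\homf{\Delta_c(\triv)}{T}$ for a suitable simple $T\in\cO_c$ via Proposition \ref{prop:subbimod}; the localisation description of Subsection \ref{subsection:localization}, combined with the uniqueness of the finite dimensional simple in $\cO_c$ (Subsection \ref{subsection:prelimtypeA}), pins $T$ and hence the isomorphism class of $E$ down up to scalar. Equivalently, I would apply the restriction functor $\bullet_\dagger$ at the dense symplectic leaf, where the semisimple image $\HC_{\cL_0}(H_c)\simeq W/W'\text{-rep}$ is one-dimensional by Proposition \ref{prop:uniqueHCbimod}, to control the full-support part of $E$, and use the semisimplicity of the finite dimensional HC category to control the $M$-part; together these force the dimension bound. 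The proof of (iii) is entirely parallel, with $H_c$ replaced by the Bezrukavnikov--Losev bimodule $D$ of \cite[Lemma 7.6]{BL}.
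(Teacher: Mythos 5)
Your homological bookkeeping is correct: the long exact sequences from $(A)$ and $(B)$, together with $\Ext(M,M)=\Ext(\cJ,\cJ)=0$, do make (ii) and (iv) each equivalent to (x), make (viii) and (ix) each equivalent to (iii), and reduce (i), (v), (vi), (vii) to the nonvanishing of $[B]\cup[A]\in\Ext^2(M,M)$ and $[A]\cup[B]\in\Ext^2(\cJ,\cJ)$ once (x) and (iii) are known. This is a cleaner organizing principle than the paper's item-by-item treatment. However, there is a genuine gap in the way you dispose of the Yoneda products. The assertion that these classes are nonzero because ``the socle and head of each length-three splice are the same simple object, which prevents the splice from collapsing'' is not a valid argument: the trivial class in $\Ext^2(M,M)$ also has representatives whose outer terms are $M$, and more to the point, the vanishing of the splice of two nonsplit short exact sequences is entirely possible (it happens whenever $\Ext^2$ itself vanishes). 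In fact, unraveling the connecting maps shows that $[B]\cup[A]\neq 0$ is \emph{equivalent} to (i), and $[A]\cup[B]\neq 0$ is \emph{equivalent} to (v), given (iii); so as written this step is circular and you need an independent input.

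That independent input is exactly what the paper supplies, by two different mechanisms. For (i) it uses that $\Ext^1(H_c,M)=\HH^1(H_c,M)$ is the space of outer derivations, and that the idempotency $\cJ^2=\cJ$ (a type-A-specific fact from \cite[Theorem 5.8.1]{Lo}) forces any derivation $H_c\to M$ to kill $\cJ$, reducing the computation to $\Ext^1$ between minimally supported HC bimodules, which vanishes by semisimplicity. For (ii) and (v) it uses the restriction functor at a point of the open leaf together with the codimension-two fact $((H_c)_\dagger)^\dagger=H_c$, which shows that $H_c$ is injective in $\HC(H_c)$; (x) and (iv) are then read off the long exact sequence, i.e.\ the logical flow is the reverse of yours. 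You will not be able to avoid bringing in at least one of these two ideas, and neither appears in your sketch. On (x) and (iii) themselves: your restriction-functor route is workable for (x) (it is essentially the paper's proof of (ii) applied to the putative extension $E$), but for (iii) the same argument only tells you that the adjunction map $E\to H_c$ has kernel $M$ and image $\cJ$, which recovers the exact sequence $(B)$ without bounding $\dim\Ext(\cJ,M)$; the embedding into $\homf{\Delta_c(\triv)}{T}$ also only applies directly to simple bimodules. So the two hardest dimension bounds remain unestablished in your write-up, and at least one of the ingredients ($\cJ^2=\cJ$ via Hochschild cohomology, or the codimension-two argument establishing injectivity of $H_c$) must be added to close the argument.
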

\begin{proof}
We show that (i) holds more generally. Namely, we have the following result.

\begin{lemma}\label{minsupp}
Let $H_c$ be any rational Cherednik algebra of type A (we do not put restrictions on the parameter $c$), and let $M$ be an irreducible Harish-Chandra $H_c$-bimodule with minimal support. Then, $\Ext(H_c, M) = 0$.
\end{lemma}
\begin{proof}
We know that $\Ext^{\bullet}(H_c, M) = \HH^{\bullet}(H_c, M)$, where $\HH^{\bullet}$ denotes Hochschild cohomology, so we need to compute $\HH^{1}(H_c, M)$. It is well known that this is the space of outer derivations (i.e. the space of derivations modulo the space of inner derivations). Now, let $\delta: H_c \rightarrow M$ be a derivation. Since $\cJ^2 = \cJ$, Subsection \ref{subsection:prelimtypeA}, the Leibniz rule implies that $\delta(\cJ) = 0$, so $\delta$ factors through the quotient algebra $H_c/\!\cJ$. This implies that $\HH^{1}(H_c, M) = \HH^{1}(H_c/\!\cJ, M)$ (note that $M$ is an $H_c/\!\cJ$-bimodule since RAnn$(M) =$LAnn$(M) = \cJ$, so this last Hochschild cohomology does make sense). Now, both $H_c/\!\cJ$ and $M$ are irreducible HC bimodules with minimal support. Recall, Subsection \ref{subsection:semisimple}, that the category of HC bimodules with minimal support is semisimple. Then, $\Ext(H_c/\!\cJ, M) = 0$, which implies that $\HH^{1}(H_c/\!\cJ, M) = \Ext_{H_c/\!\cJ\text{-bimod}}(H_c/\!\cJ, M) = 0$.  
\end{proof}

Then (i) is a special case of Lemma \ref{minsupp}. Note that (ii) and (v) are consequences of Proposition \ref{prop:injective}. Now (iii) is a consequence of (ii): we have a long exact sequence
$$
0 \rightarrow \Hom(M, \cJ) \rightarrow \Hom(M, H_c) \rightarrow \Hom(M, M) \rightarrow \Ext(M, \cJ) \rightarrow \Ext(M, H_c) \rightarrow \cdots
$$

Now, both $\Hom(M, H_c)$ and $\Ext(M, H_c)$ are $0$, so $\Hom(M, M) \rightarrow \Ext(M, \cJ)$ must be an isomorphism and the claim follows. Again using long exact sequences, we can see that $\dim(\Ext(H_{c}, \cJ)) = -\dim(\Hom(\cJ,\cJ)) + \dim(\Ext(M, \cJ)) = 0$, so (iv) is proved.  Statements (vi), (ix) are consequences of the following.

\begin{prop}\label{prop:doublewallcrossing_injective}
Assume $c = r/n$, with $\gcd(r;n) = 1$.The bimodule $D$ is injective in the category of HC $H_{c}(n)$-bimodules.
\end{prop}
\begin{proof}
We remark that we have functors $F: \HC(c,c) \rightarrow \cO_{c}, B \mapsto B \otimes_{H_{c}}\Delta_{c}(\triv)$, $G: \cO_{c} \rightarrow \HC(c,c), M \mapsto \homf{\Delta_{c}(\triv)}{M}$. Since $c > 0$, $\Delta_{c}(\triv)$ is projective in $\cO_{c}$, so thanks to \cite[Lemma 3.9]{losev_derived} the functor $F$ is exact. Note also that $F$ is left adjoint to $G$, so $G$ maps injective objects to injective objects. Thus, the lemma will follow if we find an injective $N \in \cO_{c}$ with $D = \homf{\Delta_{c}(\triv)}{N}$. Thanks to \cite[Theorem 1.3]{BEG2}, $\Delta_{c}(\triv)$ has a unique proper nonzero submodule, say $I$. We claim that $\homf{\Delta(\triv)}{I} = \cJ$, this follows because $I = \cJ\Delta_{c}(\triv)$ and Corollary \ref{cor:Hc_loc_fin_maps}. We also have that $\homf{\Delta_{c}(\triv)}{L_{c}(\triv)} = \Hom_{\CC}(L_{c}(\triv), L_{c}(\triv)) = H_{c}/\cJ = M$. 

The \emph{costandard} module $\nabla_{c}(\triv)$ is injective in category $\cO_{c}$, it has a unique proper submodule isomorphic to $L_{c}(\triv)$ and $\nabla_{c}(\triv)/L_{c}(\triv) \cong I$, all of these properties follow from the construction of $\nabla_{c}(\triv)$, see e.g. \cite[Subsection 2.3]{GGOR}. So $G(\nabla_{c}(\triv))$ is injective and contains $\homf{\Delta_{c}(\triv)}{L_{c}(\triv)} = M$. It follows that we have an injection $D \hookrightarrow G(\nabla_{c}(\triv))$. Note, however, that we have an exact sequence $0 \rightarrow G(L_{c}(\triv)) \rightarrow G(\nabla_{c}(\triv)) \rightarrow G(I)$. Thanks to the previous paragraph, we conclude that the composition length of $G(\nabla_{c}(\triv))$ is $\leq 2$. So $D \cong \homf{\Delta_{c}(\triv)}{\nabla_{c}(\triv)}$ and is therefore injective.
\end{proof}

\begin{rmk}
It is worth noticing that the category $\HC(H_{c}(W))$ has enough injectives for \emph{any} complex reflection group $W$ and parameter $c$. Indeed, let $P_{c}$ be a progenerator of the category $\cO_{c}$. Thanks to Lemma 3.9 in \cite{losev_derived} the functor $F: \HC(H_{c}(W)) \rightarrow \cO_{c}$, $B \mapsto B \otimes_{H_{c}}P_{c}$ is exact. Moreover, $F$ admits a right adjoint $G: \cO_{c} \rightarrow \HC(H_{c})$, $M \mapsto \homf{P_{c}}{M}$. So $G$ has to map injectives to injectives. Thanks to \cite[Lemma 3.10]{losev_derived}, every irreducible HC $H_{c}$-bimodule is contained in one of the form $G(M)$ for some $M \in \cO_{c}$. This implies that there are enough injectives in $\HC(H_{c})$. When $W = \fS_{n}$ and $c > 0$, we can replace $P_{c}$ by $\Delta_{c}(\triv)$. This follows because $\Delta_{c}(\triv)$ is projective and the results in Subsection \ref{subsection:semisimple}, that imply that every irreducible HC bimodule is contained in one of the form $\homf{\Delta_{c}(\triv)}{M}$. 
\end{rmk}

Now we show that $\Ext(D, \cJ) = 0$. Assume we have a short exact sequence 

  \begin{equation}\label{xx}
  0 \rightarrow \cJ \rightarrow X \buildrel \pi \over \rightarrow D \rightarrow 0
  \end{equation}
  
  Consider the induced exact sequence $0 \rightarrow \cJ \rightarrow \pi^{-1}(M) \rightarrow M \rightarrow 0$. So either $\pi^{-1}(M) = H_c$ or $\pi^{-1}(M) = \cJ \oplus M$. If $\pi^{-1}(M) = H_c$, then the exact sequence $0 \rightarrow \pi^{-1}(M) \rightarrow X \rightarrow \cJ \rightarrow 0$ gives $X = H_c \oplus \cJ$, cf. (iv), which contradicts the existence of the exact sequence (\ref{xx}). Then, we must have $\pi^{-1}(M) = \cJ \oplus M$. Using again the exact sequence $0 \rightarrow \pi^{-1}(M) \rightarrow X \rightarrow J \rightarrow 0$, we get that $X = \cJ \oplus V$, where $V$ is an extension of $M$ by $\cJ$. Then (\ref{xx}) forces $V = D$ and the sequence splits. 
 
The proof of (viii) is similar: say that we have a short exact sequence

\begin{equation}\label{xxx}
0 \rightarrow M \rightarrow X \rightarrow D
\end{equation}

So we see that $\Soc(X) = M \oplus M$ and we have an exact sequence

$$
0 \rightarrow M \oplus M \rightarrow X \rightarrow \cJ \rightarrow 0
$$

An extension of $M \oplus M$ by $\cJ$ must be of the form $B \oplus M$, where $B$ is an extension of $M$ by $\cJ$. Using the short exact sequence (\ref{xxx}) we see that $X \cong D \oplus M$. Finally, (x) is an easy consequence of the previous statements.
\end{proof}

Note that the previous proposition implies that both $H_{c}$ and $D$ are injective-projective in the category $\HC(c,c)$. The injective hull of $\cJ$ coincides with the projective cover of $M$, which is $H_{c}$, while $D$ is both the injective hull of $M$ and the projective cover of $\cJ$. It follows, in particular, that the homological dimension of $\HC(c,c)$ is infinite.

\begin{rmk}
The results of this subsection are also valid for parameters of the form $c = r/m$, with $\gcd(r;m) = 1$ and $\lfloor n/2 \rfloor < m \leq n$. Indeed, here we also have two irreducible HC bimodules $M$ and $\cJ$, where $\cJ$ has full support and $M$ has minimal support. Proposition \ref{prop:ext=0} is valid with the same proof, so in this case the category $\HC(c,c)$ is equivalent to the category of representations of the quiver

$$
\xymatrix{\bullet \ar@/^/[rr]^{\alpha} & & \bullet \ar@/^/[ll]^{\beta}}
$$

\noindent with relations $\alpha\beta = \beta\alpha = 0$.
\end{rmk}

\subsection{Two-parametric case.}\label{subsect:2paramTypeA} We study the category $\HC(c,c')$ when the parameters $c, c'$ are distinct. First, we remark that if $c$ is a regular parameter (this means that $c$ is not of the form $r/m$ with $0 < m \leq n$) then $\HC(c,c') = 0$ unless $c' = \pm c + m$ with $m \in \ZZ$ and, in this case, $\HC(c,c')$ has been completely described, Theorem \ref{thm:main1}. So we may assume that both parameters $c, c'$ are singular. Since for irreducible modules $M \in \cO_{c'}$, $N \in \cO_{c}$, $\homf{M}{N} \neq 0$ only when $\supp(M) = \supp(N)$, the description of supports of irreducible modules given in Subsection \ref{subsection:prelimtypeA} implies that a necessary condition for $\HC(c,c')$ to be nonzero is that $c$ and $c'$ have the same denominator when expressed as irreducible fractions. Then, throughout this subsection we assume that $c = r/m$, $c' = r'/m$, $\gcd(r;m) = \gcd(r';m) = 1$, $1 < m \leq n$.

Recall that, for $i = 1, \dots, \lfloor n/m \rfloor$ we have the functor $\KZ_{c'}^{i}: \cO^{i}_{c'} \rightarrow (\CC\fS_{i} \otimes \cH_{q'}(\fS_{n - mi}))\text{-mod}$, where $q' = \exp(2\pi\sqrt{-1}c')$. Let $N \in \cO^{i}_{c'}$ be the irreducible module with $\KZ_{c'}^{i}(N) = \triv$. Then, similarly to Section \ref{sect:localization}, we have that every irreducible HC $H_{c}(n)\text{-}H_{c'}(n)$-bimodule supported on the closure of the symplectic leaf $\cL_{i}$ is contained in a bimodule of the form $\homf{N}{M}$ for an irreducible module $M \in \cO_{c}^{i}$ and, moreover, that whenever $\homf{N}{M}$ is nonzero then, for every module $L \in (\CC\fS_{i} \otimes \cH_{q}(n))\text{-mod}$ the $B_{i} \times B_{n - mi}$-module $\KZ^{i}(M) \otimes_{\CC} L$ factors through the algebra $\CC\fS_{i} \otimes \cH_{q}(n - mi)$. The following result is then completely analogous to Proposition \ref{prop:upperbound}. 

\begin{prop}\label{prop:twoparam}
Let $i \in \{1, \dots, \lfloor n/m \rfloor\}$ and assume that $n - mi \neq 0$. Then, $\HC_{\cL_i}(c,c') = 0$ unless $c - c' \in \ZZ$ or $c + c' \in \ZZ$.
\end{prop}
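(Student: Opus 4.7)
The plan is to mimic the proof of Proposition \ref{prop:2param1class} with the ordinary KZ functor replaced by its refined version $\KZ^{i}$ acting on the Serre quotient $\cO_{c}^{i}/\cO_{c}^{i+1}$. Suppose $\HC_{\cL_{i}}(H_{c}(n)\text{-}H_{c'}(n))$ contains a nonzero simple bimodule $V$. By the discussion preceding the statement, $V$ embeds into some $\homf{N}{M}$, where $N \in \cO_{c'}^{i}$ is the irreducible module satisfying $\KZ_{c'}^{i}(N) = \triv$ and $M \in \cO_{c}^{i}$ is an irreducible module with $\homf{N}{M} \neq 0$. The compatibility sketched at the end of Subsection \ref{subsection:prelimtypeA} then supplies a tensor functor
\[
\KZ_{c}^{i}(M) \otimes_{\CC} \bullet : (\CC\fS_{i}\otimes\cH_{q'}(\fS_{p}))\text{-}\modd \longrightarrow (\CC\fS_{i}\otimes\cH_{q}(\fS_{p}))\text{-}\modd,
\]
where $p := n - mi > 0$. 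The factor $\CC\fS_{i}$ appears on both sides and imposes no constraint, so I would reduce the problem to the Hecke factor $\cH_{\bullet}(\fS_{p})$.

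Next I would carry out the eigenvalue bookkeeping for the braid generators $T_{1},\dots,T_{p-1}$ of $B_{p}$ exactly as in Proposition \ref{prop:2param1class}. Because all $T_{j}$ are mutually conjugate in $B_{p}$, they share a common eigenvalue set on $\KZ_{c}^{i}(M)$, and this set lies in $\{1,-q\}$. In the first case some $T_{j}$ admits $1$ as an eigenvalue; choosing one-dimensional $\cH_{q'}(\fS_{p})$-modules $L_{1}, L_{2}$ on which $T_{j}$ acts by $1$ and $-q'$ respectively (extended trivially over the $\CC\fS_{i}$ factor) and tensoring with $\KZ_{c}^{i}(M)$, the requirement that the results be $\cH_{q}(\fS_{p})$-modules forces $\{1,-q\} = \{1,-q'\}$, hence $c - c' \in \ZZ$. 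In the alternative case every $T_{j}$ acts on $\KZ_{c}^{i}(M)$ only with eigenvalue $-q$; tensoring with $L_{2}$ and demanding the defining relation $(T-1)(T+q) = 0$ yields $qq' = 1$ after discarding the degenerate sub-cases (which themselves fall into one of the two stated conclusions), i.e.\ $c + c' \in \ZZ$.

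The only step that really needs care is not any of the individual calculations but the legitimacy of the setup itself: one must confirm that the analog of Lemma \ref{lemma:mainKZ} for $\KZ^{i}$ alluded to in Subsection \ref{subsection:prelimtypeA} genuinely holds, namely, that for an irreducible $M \in \cO_{c}^{i}$ with $\homf{N}{M}\neq 0$ and any $L$ in $(\CC\fS_{i}\otimes\cH_{q'}(\fS_{p}))\text{-}\modd$, the $\pi_{1}(\underline{\hh}^{reg}/(\fS_{i}\times\fS_{p}))$-module $\KZ^{i}(M)\otimes_{\CC} L$ factors through $\CC\fS_{i}\otimes\cH_{q}(\fS_{p})$. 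Once this tensor compatibility is established, the Hecke eigenvalue combinatorics are a direct transcription of Proposition \ref{prop:2param1class}, since $\cH_{q}(\fS_{p})$ again has only a single conjugacy class of standard generators.
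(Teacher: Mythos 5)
Your proposal reproduces exactly the argument the paper intends: embed a simple $V$ into $\homf{N}{M}$ with $\KZ_{c'}^{i}(N)=\triv$, invoke the $\KZ^{i}$ tensor-compatibility stated at the end of Subsection~\ref{subsection:prelimtypeA}, reduce to the single Hecke factor $\cH_{\bullet}(\fS_{p})$, and run the eigenvalue bookkeeping from Proposition~\ref{prop:2param1class} verbatim; the paper simply labels all this ``completely analogous.'' Your explicit flag that the analogue of Lemma~\ref{lemma:mainKZ} for $\KZ^{i}$ is the load-bearing step is exactly the right thing to isolate, and the rest is correct.
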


Now assume that $c' = c + k$, with $c > 0$ and $k \in \ZZ_{> 0}$. Then, using shift functors we have an equivalence of categories $\HC(c,c) \cong \HC(c,c') \cong \HC(c',c')$ preserving the filtration by supports so that, in particular, they descend to equivalences $\HC_{\mathcal{L}_i}(c,c) \cong \HC_{\mathcal{L}_i}(c,c') \cong \HC_{\mathcal{L}_i}(c',c')$. Since we have an isomorphism $H_{c'}(n) \rightarrow H_{-c'}(n)$ fixing the subalgebras $\CC[\hh]^{\fS_n}, \CC[\hh^{*}]^{\fS_n}$, we also have an equivalence $\HC(c,-c-k) \cong \HC(c,c)$ preserving the filtration by supports. Similar results hold if $c < 0$ and $k \in \ZZ_{< 0}$.

Assume now that $c = c'+1$, with $-1 < c' < 0$. In this case, the shift functor is not an equivalence. However, it does induce a derived equivalence $_{c}P_{c'} \otimes^{L}_{H_{c'}} \bullet: D^{b}(\cO_{c'}) \rightarrow D^{b}(\cO_{c})$, see e.g. \cite[Section 5]{gordon-losev}. It follows that, if we denote by $D^{b}_{\HC}(c',c)$ the subcategory of $D^{b}(H_{c}\text{-}H_{c'}\text{-bimod})$ consisting of complexes with HC homology, then we have a derived equivalence $_{c}P_{c'} \otimes^{L}_{H_{c}} \bullet: D^{b}_{\HC}(c',c) \rightarrow D^{b}_{\HC}(c,c)$. Thus, the categories $\HC(c,c)$ and $\HC(c,c')$ have the same number of irreducibles. We remark here that Proposition \ref{prop:semisimple} is valid in the two-parametric setting, with the same proof. Hence, $\HC_{\cL_{i}}(c,c') \cong \fS_{i}\!\repp$ for $i = 1, \dots, \lfloor n/m \rfloor$. The same holds for the category $\HC_{\cL_{i}}(c',c)$. 

Let us finalize with the case that is not covered by Proposition \ref{prop:twoparam}, that is, minimally supported HC $H_{r/m}(n)\text{-}H_{r'/m}(n)$-bimodules where $m$ divides $n$, say $\ell m = n$. Note that Proposition \ref{prop:twoparam} is not valid anymore. As an easy example,  if $c = r/n, c' = r'/n$, $\gcd(r; n) = \gcd(r';n) = 1$, then $\HC_{0}(H_{c}, H_{c'}) \neq 0$, so it does not matter whether $c + c'$ or $c - c'$ are integers. 

So assume $m$ divides $n$, say $n = m\ell$, $1 < m \leq n$. Let $c = r/m, c'=r'/m$ as irreducible fractions. We have the following result.

\begin{prop}\label{prop:finally}
	The category $\HC_{\cL_{\ell}}(H_{c}(n), H_{c'}(n))$ is equivalent to the category of representations of $\fS_{\ell}$.
\end{prop}

Note that in Proposition \ref{prop:finally} we do not impose any other conditions on $c$ and $c'$. We just require that they are expressed as irreducible fractions with the same denominator which is a factor of $n$, with quotient $\ell$. 

\begin{proof}
	We proceed in several steps.
	
	{\it Step 1.} We remark that, using the isomorphisms $H_{c}(n) \rightarrow H_{-c}(n)$, we may assume that both $c, c'$ are positive. Moreover, using shift functors, we may assume that $0 < c, c' < 1$.  So we have $0 < r, r' < m \leq n$. Since both $c$ and $c'$ are positive, they are spherical. So we can work over the spherical subalgebras $eH_{c}(n)e, eH_{c'}(n)e$. To simplify the notation, we will denote the spherical subalgebras by $A_{c}(n) := eH_{c}(n)e$, $A_{c'}(n) := eH_{c'}(n)$.
	
	{\it Step 2.} Let us introduce the following notation. For a positive integer $N$, set $R_{N} = \{(z_{1}, \dots, z_{N}) \in \CC^{N} : \sum_{i = 1}^{N} z_{i} = 0\}$. This is, of course, the reflection representation of $S_{N}$. Let $x_{1}, \dots, x_{N}$ be the coordinate functions on $\CC^{N}$, and for a positive integer $k$ let $p_{k, N}(x) = x_{1}^{r} + \cdots + x_{N}^{r}$. So the invariant algebra $\CC[R_{N}]^{S_{N}}$ is generated by $p_{2, N}(x), \dots, p_{N, N}(x)$. Similarly, the invariant algebra $\CC[R_{N}^{*}]^{S_{N}}$ is generated by $p_{2,N}(y), \cdots, p_{N, N}(y)$. 
	
	{\it Step 3.} For $c > 0$, let us denote by $\oA_{c}(n)$ the quotient of the spherical subalgebra $A_{c}(n)$ by its unique maximal ideal. Recall the isomorphism $\varphi_{N, M}: \oA_{M/N}(N) \rightarrow \oA_{N/M}(M)$ that we have already used in Subsection \ref{subsect:min_supp}. It is known that $\varphi_{N, M}$ maps $p_{k, N}(x) \mapsto (N/M)p_{k, M}(x)$, while $p_{k, N}(y) \mapsto (M/N)^{k-1}p_{k, M}(y)$. Both of these assertions follow from \cite[Section 8]{CEE}, see also \cite[Section 7]{EGL}. 
	
	{\it Step 4.} In Steps 4-8 we are going to produce an equivalence from the category of minimally supported bimodules $\HC_{\cL_{\ell}}(A_{c}(n), A_{c'}(n))$ to $\HC(A_{N_{1}}(\ell), A_{N_{2}}(\ell))$ for some integers $N_{1}, N_{2} \in \ZZ_{> 0}$, Proposition \ref{prop:finally} follows from here. First of all note that, since we are taking bimodules with minimal support, $\HC_{\cL_{\ell}}(A_{c}(n), A_{c'}(n)) = \HC_{\overline{\cL}_{\ell}}(A_{c}(n), A_{c'}(n))$, so we are going to think of objects in $\HC_{\cL_{\ell}}(A_{c}(n), A_{c'}(n))$ as honest bimodules. So let $B \in \HC_{\cL_{\ell}}(A_{r/m}(n), A_{r'/m}(n))$. Since $B$ has minimal support it is, in particular, an $\oA_{r/m}(n)\text{-}\oA_{r'/m}(n)$-bimodule. Using the isomorphisms $\varphi_{N, \ell r}, \varphi_{N, \ell r'}$, we may think of $B$ as an $\oA_{m/r}(\ell r)\text{-}\oA_{m/r'}(\ell r')$-bimodule, equivalently, as an $A_{m/r}(\ell r)\text{-}A_{m/r'}(\ell r')$-bimodule whose left (resp. right) annihilator coincides with the maximal ideal of $A_{m/r}(\ell r)$ (resp. of $A_{m/r'}(\ell r')$). By Step 3, the following operators act locally nilpotently on $B$:
	\begin{gather}
	a_{k}(x): b \mapsto (m/r)p_{k, \ell r}(x) b - (m/r')bp_{k, \ell r'}(x) \label{eqn:adx}\\
	d_{k}(y) : b \mapsto (r/m)^{k-1}p_{k, \ell r}(y) b - (r'/m)^{k-1}bp_{k, \ell r'}(y) \label{eqn:ady}
	\end{gather}
	
	{\it Step 5.} Now let $m = rk_{1} + m_{1}$, with $0 \leq m_{1} < r$, $k_{1} \in \ZZ$. So we have the shift $A_{m_{1}/r}(\ell r)\text{-}A_{m/r}(\ell r)$-bimodule, say $P_{m_{1}/r, m/r}(\ell r)$. Consider $B' := P_{m_{1}/r, m_{r}}(\ell r) \otimes_{A_{m/r}(\ell r)} B$, which is an $A_{m_{1}/r}(\ell r)\text{-}A_{m/r'}(\ell r')$-bimodule . We claim that the operators (\ref{eqn:adx}), (\ref{eqn:ady}) act locally nilpotently on $B'$. For (\ref{eqn:adx}), this follows because $a_{k}(x)(b_{1} \otimes b_{2}) = \ad((m/r)p_{k, \ell r}(x))(b_{1}) \otimes b_{2} - b_{1} \otimes a_{k}(x)(b_{2})$ and the operator $\ad((m/r)p_{k, \ell r}(x))$ acts locally nilpotently on $P_{m_{1}/r, m_{r}}$. The reasoning for (\ref{eqn:ady}) is the same. Now let $m' = r'k_{1}' + m_{1}'$ be division with remainder, and consider the shift $A_{m/r'}(\ell r')\text{-}A_{m_{1}'/r'}(\ell r')$-bimodule $P_{m/r', m_{1}'/r'}(\ell r')$. Let $B_{1} := B' \otimes_{A_{m/r'}(\ell r')} P_{m/r', m_{1}'/r'}(\ell r')$. This is an $A_{m_{1}/r}(\ell r)\text{-}A_{m_{1}'/r'}(\ell r')$-bimodule. It is clear that its left (resp. right) annihilator is the maximal ideal in $A_{m_{1}/r}(\ell r)$ (resp. $A_{m_{1}'/r'}(\ell r')$), that it is finitely generated as a left or right module, and that the operators (\ref{eqn:adx}), (\ref{eqn:ady}) act locally nilpotently on $B_{1}$. 
	
	{\it Step 6.} Now we use the isomorphisms $\varphi_{\ell r, \ell m_{1}}$ and $\varphi_{\ell r', \ell  m_{1}'}$ to view $B_{1}$ as an $A_{r/m_{1}}(\ell m_{1})\text{-}A_{r'/m_{1}'}(\ell m_{1}')$-bimodule. Note that the operators that act locally nilpotently on $B_{1}$ now are
	\begin{gather}
	a_{k}^{1}(x): b \mapsto (m/m_{1})p_{k, \ell m_{1}}(x) b - (m/m_{1}')bp_{k, \ell m_{1}'}(x) \label{eqn:adx1}\\
	d_{k}^{1}(y) : b \mapsto (m_{1}/m)^{k-1}p_{k, \ell m_{1}}(y) b - (m_{1}'/m)^{k-1}bp_{k, \ell m_{1}'}(y) \label{eqn:ady1}
	\end{gather}
	
	And we repeat the same procedure of multiplying by shift bimodules on the left and right, to get an $A_{r_{1}/m_{1}}(\ell m_{1})\text{-} A_{r_{1}'/m_{1}'}(\ell m_{1}')$-bimodule $B_{2}$, which is finitely generated as a left or right module, and on which the operators (\ref{eqn:adx1}), (\ref{eqn:ady1}) act locally nilpotently. 
	
	{\it Step 7.} Continuing with this procedure, since $\gcd(r,m) = 1 = \gcd(r', m)$, the Euclidean algorithm tells us that we are going to get to an $A_{N_{1}}(\ell)\text{-}A_{N_{2}}(\ell)$-bimodule $\tilde{B}$, where $N_{1}, N_{2}$ are integers, $\tilde{B}$ is finitely generated as either a left or right module, and the operators
	\begin{gather}
	b \mapsto mp_{k, \ell}(x)b - mbp_{k, \ell}(x) \label{eqn:adxx} \\
	b \mapsto (1/m)^{k-1}p_{k, \ell}(y)b - (1/m)^{k-1}bp_{k, \ell}(y) \label{eqn:adyy}
	\end{gather}
	
	\noindent act locally nilpotently. From (\ref{eqn:adxx}), it follows that $\CC[\hh_{\ell}]^{S_{\ell}}$ acts locally nilpotently on $\tilde{B}$. From (\ref{eqn:adyy}) it follows that $\CC[\hh^{*}_{\ell}]^{S_{\ell}}$ acts locally nilpotently on $\tilde{B}$, too. Thus, $\tilde{B} \in \HC(A_{N_{1}}(\ell), A_{N_{2}}(\ell))$.
	
	{\it Step 8.} It is clear that everything we have done in Steps 4-7 can be reversed. So we get a category equivalence $\HC_{\cL_{\ell}}(A_{c}(n), A_{c'}(n)) \cong \HC(A_{N_{1}}(\ell), A_{N_{2}}(\ell))$. Since all our parameters are positive, hence spherical, we get $\HC_{\cL_{\ell}}(H_{c}(n), H_{c'}(n)) \cong \HC(H_{N_{1}}(\ell), H_{N_{2}}(\ell))$. The latter category is equivalent to the category of representations of $\fS_{\ell}$. We are done.
\end{proof}

Let us summarize our results in the two-parametric setting in the following theorem.

\begin{theorem}\label{thm:main2paramA}
	Let $c = r/m, c' = r'/m$ be such that $\gcd(r;m) = 1 = \gcd(r';m')$, and $1 < m \leq n$. Then.
	\begin{enumerate}
		\item Let $i = 0, \dots, \lfloor n/m\rfloor$. Then, $\HC_{\cL_{i}}(H_{c}(n), H_{c'}(n)) = 0$ unless $c - c' \in \ZZ$ or $c + c' \in \ZZ$. If $c - c' \in \ZZ$ or $c + c' \in \ZZ$, then $\HC_{\cL_{i}}(H_{c}(n), H_{c'}(n))$ is equivalent to the category of representations of $\fS_{i}$.
		\item If $n - mi = 0$, then $\HC_{\cL_{i}}(H_{c}(n), H_{c'}(n))$ is equivalent to the category of representations of $\fS_{i}$, without further restrictions on the parameters $c, c'$.
	\end{enumerate}
\end{theorem}

Note that, in particular, if $c = r/m, c' = r'/m$ with $m\ell = n$ but neither $c - c'$ nor $c + c'$ is an integer, then $\HC(H_{c}(n), H_{c'}(n)) = \HC_{\cL_{\ell}}(H_{c}(n), H_{c'}(n))$.

\end{document}